\newtheorem{theorem}{Theorem}[section]
\newtheorem{definition}[theorem]{Definition}
\newtheorem{lemma}[theorem]{Lemma}
\newtheorem{corollary}[theorem]{Corollary}
\newtheorem{proposition}[theorem]{Proposition}
\newtheorem{remark}[theorem]{Remark}
\newtheorem{cond}[theorem]{Condition}
\newcommand{\be}{\begin{equation}}
\newcommand{\ee}{\end{equation}}
\newcommand{\nn}{\nonumber}
\newcommand{\prob}{\mathbb P}
\newcommand{\expec}{\mathbb E}
\newcommand{\atanh}{{\rm atanh}}
\newcommand{\asinh}{{\rm asinh}}
\newcommand{\dint}{{\rm d}}
\newcommand{\sss}{\scriptscriptstyle}
\numberwithin{equation}{section}
\newcommand{\e}{{\rm e}}
\newcommand{\gs}{\left(G_{\sss N}\right)_{N\geq1}}
\newcommand{\ab}{\alpha(\beta)}
\newcommand{\rem}[1]{}
\newcommand{\bdelta}{\boldsymbol{\delta}}
\newcommand{\wdt}[1] { #1 }
\def\1{{\mathchoice {1\mskip-4mu\mathrm l}      % Blackboard bold 1
{1\mskip-4mu\mathrm l}
{1\mskip-4.5mu\mathrm l} {1\mskip-5mu\mathrm l}}}
\newcommand{\indic}[1]{\1_{\{#1\}}}
\newcommand{\q}{Q_{\sss N}} %misura sui grafi
\newcommand{\p}{P_{\sss N}} % misura prodotto
\newcommand{\pr}{\mathbb P} 
\newcommand{\GRGw}{\mathrm{GRG}_{\sss N}(\boldsymbol{w})}
\newcommand{\ICW}{\mathrm{CW}_{\sss N}(\boldsymbol{J})}
\newcommand{\cbeta}{\ab}
\newcommand{\cbetaq}{\ab^{2}}
\newcommand{\col}[1]{\textcolor[rgb]{0,0,0}{#1}}
\newcommand{\co}[1]{\textcolor[rgb]{0,0,0}{#1}}
\newcommand{\change}[1]{\textcolor[rgb]{0,0,0}{#1}}
\newcommand{\eqn}[1]{\begin{equation}#1\end{equation}}
\newcommand{\betacan}{\beta_c}
\newcommand{\E}{\mathbb E}
\begin{document}

\title{  Ising critical behavior of inhomogeneous \\Curie-Weiss \change{models} and annealed random graphs }
\author{
Sander Dommers$^{\textup{{\tiny(a)}}}$,
Cristian Giardin\`a$^{\textup{{\tiny(b)}}}$,
Claudio Giberti$^{\textup{{\tiny(c)}}}$,\\
Remco van der Hofstad$^{\textup{{\tiny(d)}}}$,
Maria Luisa Prioriello$^{\textup{{\tiny(b,d)}}}$\;.
\\
{\small $^{\textup{(a)}}$
University of Bologna},
{\small Piazza di Porta San Donato 5, 40126 Bologna, Italy}
\\
{\small $^{\textup{(b)}}$
University of Modena and Reggio Emilia},
{\small via G. Campi 213/b, 41125 Modena, Italy}
\\
{\small $^{\textup{(c)}}$
University of Modena and Reggio Emilia},
{\small Via Amendola 2, 42122 Reggio Emilia, Italy}
\\
{\small $^{\textup{(d)}}$ 
Eindhoven University of Technology,}
{\small P.O. Box 513, 5600 MB Eindhoven, The Netherlands  }
}
\maketitle

\pagenumbering{arabic}

\begin{abstract}
We study the critical behavior \co{for inhomogeneous versions of the Curie-Weiss model}, where the coupling constant  $J_{ij}(\beta)$ for the edge $ij$ on the complete graph is given by $J_{ij}(\beta)=\beta w_iw_j/(
%2
\col{\sum_{k\in[N]}w_k})$. We call the product form of these couplings the rank-1 inhomogeneous Curie-Weiss model. This model also arises (with \col{inverse temperature} $\beta$ replaced by $\sinh(\beta)$) 
%for 
\col{from} the annealed Ising model on the generalized random graph. We assume that the vertex weights $(w_i)_{i\in[N]}$ are regular, in the sense that their empirical distribution converges and the second moment converges as well.

We identify the critical 
%values 
\col{temperatures} and exponents for these models, as well as a non-classical limit theorem for the total spin at the critical point. These depend sensitively on the number of finite moments of the weight distribution. When 
%also 
\col{the} fourth moment of the weight distribution converges, then the critical behavior is the same as on the (homogeneous) Curie-Weiss model, so that the inhomogeneity is weak. When the fourth moment of the weights converges to infinity, and the weights satisfy an asymptotic power law with exponent $\tau$ with $\tau\in(3,5)$, then the critical exponents depend sensitively on $\tau$. 
%For example, 
\col{In addition,} at criticality, the total spin $S_{\sss N}$ satisfies that $S_{\sss N}/N^{(\tau-1)/(\tau-2)}$ converges in 
%distribution
\col{law}
to some limiting random variable
\col{whose distribution we explicitly characterize}.

\end{abstract}
%\tableofcontents
%\newpage
%\vspace{1.cm}
%\tableofcontents

%\newpage

\section{Introduction}

Universality is a key concept in the theory of phase transitions, with application
to a large variety of physical systems. Informally, universality means that in the 
thermodynamic limit  different systems show common properties close 
to criticality. The theory based on the renormalization group suggests that
systems fall into {\em universality classes}, defined by the values of their {\em critical
exponents} describing the nature of the singularities of measurable
thermodynamic quantities at the critical point.   
%Often it is enough to have good control of one element of a universality class 
%and then one can show that many other systems have the same set of
%critical exponents. 
%For instance, for the ferromagnetic phase transition, the Curie-Weiss 
%model belongs to the mean-field universality class and 
%the critical exponents of the Ising models on the lattice $\mathbb{Z}^d$ with 
%translation invariant interactions take the mean-field values for all $d\ge 4$. 

In the presence of heterogeneities, e.g.\ {\em spin systems on random graphs} used to model 
interaction on a network \col{\cite{AB,DGM,DGM2,LVVZ}} it is not clear a-priori to 
what extent universality applies.
From the point of view of the structure of the network,
emerging properties 
of real networks have been identified
in several empirical studies in different contexts 
-- social, information, technological and biological networks. 
Many of them are {\em scale free}, with a degree sequence obeying power-law distribution,
and {\em small world}, with short graph-distance among vertices.   
As a consequence power-law random graphs,
i.e., graph sequences where the fraction of nodes that have $k$ neighbors is 
proportional to $k^{-\tau}$ for some $\tau > 1$, 
are often used as mathematical models for real-world networks. 
In this paper we investigate universality for 
spin system on power-law random graphs displaying phase 
transitions.

The issue of universality is  related to the network functionality.
Indeed the occurrence of a thermodynamic phase transition is associated
to a change in macroscopic properties of the networks,
for instance the possibility to reach consensus
in a social network can be related to the occurrence
of a spontaneous magnetization.  
Thus the investigation of different universality classes
for spin systems on random graphs is a relevant 
question with immediate practical relevance
for the network functionality.

Due to the random environment, when considering the Ising model on
the random graphs used to model real networks,  a distinction is required 
between different averaging procedures. Two settings are often studied in the 
literature: the {\em quenched measure}
(graph realizations are studied one-by-one so that they produce 
a random Boltzman-Gibbs measure) and the {\em annealed measure} (all graph 
realizations are considered at once and they give rise to a deterministic 
Boltzman-Gibbs measure). See \col{\cite{GGvdHP, CGZ}} for an extended discussion
of the two settings.

In the paper \cite{DGH2} the {\em quenched critical exponents} have been
rigorously analyzed for a large class of random graph models.
More precisely in \cite{DGH2} it is proved that the critical
exponent $\boldsymbol{\delta}$ (describing the behavior of the magnetization 
at the critical temperature as the external field vanishes), the exponent $\boldsymbol{\beta}$ 
(describing the behavior of the spontaneous
magnetization as the temperature increases to the critical temperature) 
and the exponent $\boldsymbol{\gamma}$ (describing the divergence of the susceptibility as the 
temperature decreases to the critical temperature) take the same values as 
the mean-field Curie-Weiss model whenever the degree distribution
has a finite fourth moment. This includes for instance the case of the  Erd\H{o}s-R\'enyi random graph. 
For power law random graphs, it is proved that for $\tau > 5$ the model is in the mean-field universality class,
whereas the critical exponents are different from the mean-field values for $\tau\in (3,5)$.   

In this paper we provide the %same 
analysis \col{of the critical behavior} 
but in the annealed setting. Our results are fully
compatible with the universality conjecture. The annealed critical temperature is different
(actually higher) than the quenched critical temperature, but the set of {\em annealed critical exponents}
 that can be rigorously studied are the same as the quenched critical exponents. In the annealed setting our results
are stronger since we are able to show that ${\boldsymbol{\gamma}'} = {\boldsymbol{\gamma}}$.
Here ${\boldsymbol{\gamma}'}$ describes the divergence of the susceptibility as the 
temperature approaches  to the critical temperature from below, and in the quenched
setting we were able to show only that ${\boldsymbol{\gamma}'} \ge {\boldsymbol{\gamma}}$.

A main difference between the quenched and annealed case is that while the analysis of the
quenched measure could be done in great generality, the study of the annealed case is much harder. 
Indeed the results of  \cite{DGH2} are valid for all graph sequences that are locally like a homogeneous random tree 
\col{\cite{BenSch11,DM,DemMon10-bis,DGH,MMS}} and uniformly sparse. For the annealed setting it is not enough
to control the behavior of the model on the typical graph realizations (namely rooted random trees).
For the annealed measure one needs to study exponential functionals of the graphs, 
i.e., questions on large deviations of sparse random graphs that are largely
unsolved. Thus we specialize our analysis of the annealed critical exponents  
to a particular class of random graphs models. This is given by the Generalized
Random Graph models, also called inhomogeneous random graphs of rank-1 in the literature
(see \col{\cite{KBHK,Bia}} for a non-rigorous study).

By exploiting the factorization of the Gibbs measure and the edges independence
we reduce the study of the annealed measure for the Ising model on the Generalized Random Graph 
to the analysis of an inhomogeneous Curie-Weiss model.  
As we shall see, for this model we are able to also study the properties {\em at criticality}. On a sequence of temperatures 
approaching the critical value, we prove the scaling limit for the properly renormalized total \col{spin}.
%Our proofs start by relating the magnetization and susceptibility of the Ising model on the 
%Generalized Random Graph to the same quantities of the Inhomogeneous Curie-Weiss model. 
 As a result,  our 
 %results 
 \col{findings} extend the analysis of the scaling limit of the standard Curie-Weiss model \cite{Ellis-New01,Ellis-New02, El}
 and provides new asymptotic laws for the (properly renormalized) total spin.

\section{Model definitions and results}
%\noindent
%We start by defining the random graph on which the ferromagnetic model is defined.

%%%%%%%%%%%%%%%%%%%%%%%%%%%%%%%%%%%%%%%%%%%%%%%%%%%%%%%%%%%%%%%%%%%%%%%%%%%%%%%%%%%%%%%%%%%%%%
%%%%%%%%%%%%%%%%%%%%%%%%%%%%%%%%%%%%%%%%%%%%%%%%%%%%%%%%%%%%%%%%%%%%%%%%%%%%%%%%%%%%%%%%%%%%%%

\subsection{Inhomogeneous Curie-Weiss model}

We start by defining the \col{inhomogeneous} Curie-Weiss model.  
This is a generalization of the classical Curie-Weiss model 
in which the strength of the ferromagnetic interaction 
between spins is not spatially uniform. As the standard Curie-Weiss model,
it is defined on the complete graph with vertex set $\left[N\right]:=\left \{1,\ldots,N \right\}$. 
\change{See Table 1 at the end of the paper for a summary of the important notation used 
in this paper.}
\begin{definition}[Inhomogeneous Curie-Weiss model]
Let $\sigma = \{\sigma_i\}_{i\in\left[N\right]}\in \{-1,1\}^{N}$ be  spin variables.
The \col{inhomogeneous} Curie-Weiss model, denoted by $\ICW$, 
is defined by the Boltzmann-Gibbs measure 
\be
\mu_{\sss N}(\sigma) = \frac{\e^{H_{\sss N}(\sigma)}}{Z_{\sss N}} 
\ee
where the Hamiltonian is
\be
H_{\sss N}(\sigma)=\frac{1}{2}\sum_{i,j \in[N]}J_{ij}(\beta)\sigma_{i}\sigma_{j}+B\sum_{i \in[N]}\sigma_{i} 
\ee
and $Z_{\sss N}$ is the normalizing partition function.
Here $\beta$ is the inverse temperature, $B$ is the external magnetic field and
$\mathbf{J} = \{J_{ij}(\beta)\}_{i,j \in[N]}$ are the spin couplings.
\end{definition}
In the above, the interactions $J_{i,j}(\beta)$ 
%which 
\col{might} be arbitrary functions
of the inverse temperature (in particular no translation invariance is required), 
provided that the thermodynamic limit is well-defined, i.e., the following limit 
defining the {\em pressure} exists and is finite,
\be
\phi (\beta,B) := \lim_{N\to\infty} \frac{1}{N} \log Z_{\sss N}(\beta,B)\;.
\ee 
In the following we will restrict to the ferromagnetic version of the model, i.e., we will assume
$J_{ij}(\beta) \change{>} 0$. Since the coupling constants $\mathbf{J} = \{J_{ij}(\beta)\}_{i,j \in[N]}$ are \change{positive and possibly} different for different edges, we speak of an {\em inhomogeneous} Curie-Weiss model. We next state our hypotheses on the coupling variables. Each vertex $i \in \left[N\right]$ receives a weight $w_i$, We will take $\mathbf{J} = \{J_{ij}(\beta)\}_{i,j \in[N]}$ such that
	\eqn{
	\label{rank-1-ICW}
	J_{ij}(\beta)=\frac{w_iw_j}{
	%2
	\col{\ell_{\sss N}}} \beta,
	\qquad
	\text{where}
	\qquad
	\ell_{\sss N}=\sum_{k\in [n]} w_k.
	}
In the case where $w_i\equiv 1$, our model reduces to the (homogeneous) Curie-Weiss model.
We will call the coupling constants in \eqref{rank-1-ICW} the {\em rank-1 inhomogeneous Curie-Weiss model.} In Section \ref{sec-weights-ass}, we describe the assumptions that we make on the weight sequence $\boldsymbol{w} = (w_i)_{i \in [N]}$.

\medskip

In \cite{GGvdHP} it is shown that  the rank-1 inhomogeneous Curie-Weiss model 
arises in the study of the {\em annealed} Ising model with network of interactions given by the 
rank-1 inhomogeneous random graph, also called the {\em generalized random graph}, which we describe next.

\subsection{Generalized random graph}
In the generalized random graph \col{\cite{vdH,BJR}}, each vertex $i \in \left[N\right]$ receives a weight $w_i$. 
%(here and throughout  this  paper  $ \left[N\right]:=\left \{1,\ldots,N \right\}$ will be used to denote the vertex set of a graph $G_{\sss N}$).  
Given the weights, edges are present independently, but the occupation probabilities for different edges are not identical, rather they are 
moderated by the weights of the vertices. We  assume that the weights $\boldsymbol{w} = (w_i)_{i \in [N]}$ are strictly positive  (there is no loss of generality in supposing this, since the vertices with zero weight will be isolated and can be removed from the network).

\begin{definition}[Generalized random graph]
Denote by $I_{ij}$ the Bernoulli indicator that the edge between vertex $i$ and vertex $j$ is present and by 
$p_{ij} = \mathbb{P}\left(I_{ij} = 1\right)$ the edge probability, where different edges are present independently.
Then, the {\em generalized random graph} \col{with vertex set $[N]$},  denoted by $\GRGw$,
is defined by 
	\be\label{eq-prob-edge}
	p_{ij} \, = \, \frac{w_i w_j}{\ell_{\sss N} + w_i w_j},
	\ee
where $\ell_{\sss N}= \sum_{i=1}^N w_i$ is the total weight of all vertices. 
\end{definition}
%{When the weights are themselves random variables, they introduce
%a double randomness: firstly there is the randomness introduced by the weights, and secondly there is the randomness introduced by the edge occupations, which are %conditionally
%independent given the weights.}
%The results proved in this paper hold almost surely with respect to the choice of the weights.\\

We have now defined two classes of models that depend on vertex weights $\boldsymbol{w} = (w_i)_{i \in [N]}$. We next state the assumptions on these weights.

\subsection{Assumptions on the vertex weights}
\label{sec-weights-ass}
We study sequences of inhomogeneous Curie-Weiss models and generalized random graphs as  $N\to \infty$. For this, we need to assume that the vertex weight sequences $\boldsymbol{w} = (w_i)_{i \in [N]}$ are sufficiently nicely behaved. 
%This will be done as follows. 
Let $U_{\sss N} \in [N]$ denote a uniformly chosen vertex in $\GRGw$ and $W_{\sss N} = w_{U_{\sss N}}$ its weight. Then, the following condition defines the asymptotic weight $W$ and set the convergence properties of  $(W_{\sss N})_{N\ge 1}$ to $W$:

\begin{cond}[Weight regularity]  There exists a random variable $W$  such that, as $N\rightarrow\infty$,
\label{cond-WR-GRG}
\begin{enumerate}[(i)]
\item $W_{\sss N} \stackrel{\cal D}{\longrightarrow} W$,
%\item $\mathbb{E}[W_{\sss N}]=\frac{1}{N}\sum_{i\in [N]} w_{i} \rightarrow \mathbb{E}[W]< \infty$,
\item $\mathbb{E}[W_{\sss N}^2] =\frac{1}{N}\sum_{i\in [N]} w^{2}_{i}  \rightarrow \mathbb{E}[W^2]< \infty$,
%\item $\mathbb{E}[W_{\sss N}^3] =\frac{1}{N}\sum_{i\in [N]} w^{3}_{i}  \rightarrow \mathbb{E}[W^3]< \infty$,
%\item  $\mathbb{E}[W_{\sss N}^4] =\frac{1}{N}\sum_{i\in [N]} w^{4}_{i}  \rightarrow \mathbb{E}[W^4]< \infty$,
\end{enumerate}
where $ \stackrel{\cal D}{\longrightarrow}$ denotes convergence in distribution. 
Further, we assume that $\mathbb{E}[W]>0$.
\end{cond}
\noindent 
%In this paper we will work either with deterministic sequences of weights (for which Condition \ref{cond-WR-GRG} must be imposed)
%or with random sequences of i.i.d. weights (for which Condition \ref{cond-WR-GRG} is a consequence of the strong
%law of large number). 
%These properties define the asymptotic properties of the graph. For  instance, 
%when 
Note that, by uniform integrability, Condition \ref{cond-WR-GRG}(ii) implies that also $\mathbb{E}[W_{\sss N}]=\frac{1}{N}\sum_{i\in [N]} w_{i} \rightarrow \mathbb{E}[W]< \infty$.

Condition \ref{cond-WR-GRG} implies that the sequence $( \GRGw )_{N\ge 1}$ is a 
uniformly sparse tree-like graph with strongly finite mean and with asymptotic degree $D$ 
distributed as a mixed Poisson random variable,
	\be
	\label{degree}
	\prob(D=k)=\mathbb{E}\left[\e^{-W}\frac{W^k}{k!}\right],
	\ee
see e.g., \cite[Chapter 6]{vdH}.

Our results depend sensitively on whether the fourth moment 
%of $W_{\sss N}$ converges to 
\co{of $W$ is finite}. When this is not the case, then we will assume a power-law bound on the tail of the asymptotic weight:

\begin{cond}[Tail of $W$]  The random variable $W$ satisfies either of the following:
\label{cond-WR-GRG-tail}
\begin{enumerate}[(i)]
\item $ \mathbb{E}[W^{4}]<\infty$,
\item $W$ obeys a {\em power law with exponent} $\tau\in (3,5]$, i.e.,  there exist constants $C_{\sss W}>c_{\sss W}>0$ and $w_{0}>1$ such that
\be\label{eq-power-law}
c_{\sss W}w^{-(\tau-1)}\le \pr(W>w)\le C_{\sss W}w^{-(\tau-1)}, \qquad \forall w>w_{0}.
\ee
\end{enumerate}
\end{cond}
\medskip

To prove the results on the scaling limit at criticality 
%Theorem \ref{thm-nonclassicalclt} concerning the scaling behavior of the total spin
%, see Section~\ref{thm-nonclassicalclt}, 
we will strengthen our assumptions as follows:
%we restrict to deterministic weights sequences in the case $\tau\in(3,5)$ as follows.
\begin{cond}[Tail of $W_{\sss N}$ and deterministic sequences]  
The sequence of weights $(w_i)_{i\in[N]}$ satisfies either of the following:
\label{cond-det}
\begin{enumerate}[(i)]
\item  $\mathbb{E}[W_{\sss N}^4] =\frac{1}{N}\sum_{i\in [N]} w^{4}_{i}  \rightarrow \mathbb{E}[W^4]< \infty$,
\item it coincides with the deterministic sequence
	\be
	\label{eq-precisepowerlaw}
	w_i = \col{c_{w}}\left(\frac{N}{i}\right)^{1/(\tau-1)},
	\ee
for some constant $\col{c_{w}}>0$ and $\tau\in(3,5)$.\end{enumerate}
\end{cond}
We remark that the above deterministic sequence is $N$-dependent (we do not make this dependence explicit) and its limit $W$ satisfies~\eqref{eq-power-law} since $w_i=[1-F]^{-1}(i/N)$, where $F(x)=1-(\col{c_{w}}x)^{-(\tau-1)}$ for $w\geq \col{c_{w}}$. In the next section, we explain what the annealed measure of the Ising model on $\GRGw$ is.

%\vspace{0.3cm}
%\MLP{In order that the $\GRGw$ is a \emph{locally like a homogeneous random tree, uniformly sparse} and has a \emph{degree distribution} with \emph{strongly finite mean} (definitions \ref{Stro_fin} and \ref{Uni_spar}), called $W_{\sss N} = w_{V_{\sss N}}$, with $V_{\sss N} \in [N]$ uniformly chosen, we need:\\
%$\bullet \,W_{\sss N} \stackrel{d}{\rightarrow} W$\\
%$\bullet \;\mathbb{E}[W_{\sss N}] \rightarrow \mathbb{E}[W]$\\
%$\bullet \,\left(\mathbb{E}[W_{\sss N}^2] \rightarrow \mathbb{E}[W^2]\right)$.
%}
\noindent 
%Referring to \cite{vdH}, we have that $\CMNd$ and $\GRGw$ are locally tree-like graphs uniformly sparse when the properties $(a)$ and $(b)$ of the Conditions \ref{cond-DR-CM} and \ref{cond-WR-GRG} hold. Moreover   the  law of the asymptotic degree is $p_k=\mathbb{P}\left(D=k\right)$ for $\CMNd$, while  it is a mixed Poisson $Poi(W)$, i.e. $p_k=\mathbb{E}\left[\e^{-W}\frac{W^k}{k!}\right]$, for  $\GRGw$.\\ When the property $(c)$ of the Conditions \ref{cond-DR-CM} and \ref{cond-WR-GRG} holds, we have also strongly finite mean.\\Further, by \cite{BJR,JL} when $\nu> 1$, then a giant component exists. Here, since  the degree distribution  for $\GRGw$ is $D=Poi\left(W\right)$,  by (\ref{nu_d}) we have $\nu = \frac{\mathbb{E}[W^2]}{\mathbb{E}[W]}$, because $\mathbb{E}[D]= \mathbb{E}[W]$ and $\mathbb{E}\left[D\left(D-1\right)\right] = \mathbb{E}[W^2]$.

%%%%%%%%%%%%%%%%%%%%%%%%%%%%%%%%%%%%%%%%%%%%%%%%%%%%%%%%%%%%%%%%%%%%%%%%%%%%%%%%%%%%%%%%%%%%%%
%%%%%%%%%%%%%%%%%%%%%%%%%%%%%%%%%%%%%%%%%%%%%%%%%%%%%%%%%%%%%%%%%%%%%%%%%%%%%%%%%%%%%%%%%%%%%%

\subsection{Annealed Ising Model}\label{iniz_def}
We first define the annealed Ising model in general on finite graphs with $N$ vertices, then we specialize to $\GRGw$. We denote by  $G_{\sss N}=(V_{\sss N},E_{\sss N})$ a random graph with vertex set $V_{\sss N} = \left[N\right]$ and edge set $E_{\sss N} \subset V_{\sss N} \times V_{\sss N}$.
% and by $\mathcal G_{\sss N}$ the set of all possible graphs with $N$ vertices. 
%For any $N\in {\mathbb N}$, we 
We denote by  $\q$ the law of the graphs with $N$ vertices.
% belonging to $\mathcal G_{\sss N}$.
\begin{definition}[Annealed Ising measure]
\label{measure}
For spin variables  $\sigma = \left (\sigma_1,...,\sigma_{\sss N}\right )$
taking values on the space of spin configurations $\Omega_{\sss N}=\{-1,1\}^N$
%we  consider the following measure:
\col{the annealed Ising measure is defined by}
\begin{equation}
\label{annealing}
\wdt{\p}(\sigma) = \frac{\q\left(\exp \left[ \beta \sum_{(i,j)\in E_{\sss N}}{\sigma_i \sigma_j} + B \sum_{i \in[N]} {\sigma_i}  \right] \right)}{\q(Z_{G_{\sss N}} \left( \beta, B \right))},
\end{equation}
where
\begin{equation}\label{eq-part-function}
Z_{G_{\sss N}} \left( \beta, B \right)= \sum_{\sigma \in \Omega_{\sss N}} \exp \left[ \beta \sum_{(i,j)\in E_{\sss N}}{\sigma_i \sigma_j} + B \sum_{i \in[N]} {\sigma_i} \right]
\end{equation}
is the partition function. 
%Here $\beta \ge 0$ is the inverse temperature and $B\in\mathbb{R}$ is the uniform external magnetic field.
\end{definition}
\noindent
With abuse of notation in the following we use the same symbol to denote both a measure and the corresponding expectation. 
%Moreover, we remark that all the measures defined above depend sensitively on the two parameters $(\beta,B)$. However, for the sake of notation, we will drop the %dependence of the measures on these parameters. We will denote the partition function by $Z_{\sss N}$ instead $Z_{G_{\sss N}}$ and sometimes we will use  $
%\text{Var}_{\mu}(X)$ to denote the variance of a random variable $X$ with law $\mu$.

\begin{definition}[Annealed thermodynamic quantities]
\noindent For a given $N\in\mathbb{N}$ we introduce the following thermodynamics quantities
at finite volume:
\begin{enumerate}[(i)]

%\item[(i)] The  \emph{random quenched pressure}:
%\be\label{press_{\sss N}}
%\psi_{\sss N} (\beta, B) \,=\, \frac{1}{N} \log Z_{\sss N} \left( \beta, B \right)  \;.
%\ee

%\item[(ii)] The  \emph{averaged quenched pressure}:
%\be\label{av_qu_press_{\sss N}}
%\overline{\psi}_{\sss N} (\beta, B) \,=\, \frac{1}{N} \q(\log Z_{\sss N} \left( \beta, B \right))  \;.
%\ee

\item  The  {\em annealed pressure}:
\be\label{def-annealed-press}
\wdt{\psi}_{\sss N} (\beta, B)  \,=\, \frac{1}{N} \log \left(\q \left(Z_{\sss N} \left( \beta, B \right)\right)\right).
\ee

\item The  \emph{annealed magnetization}:
\be\label{magn_N}
%M_{\sss N} (\beta, B) \,=\, \m\left(\frac{S_{\sss N}}{N}\right), \quad 
\wdt{M}_{\sss N} (\beta, B) \,=\, \wdt{\p}\left(\frac{S_{\sss N}}{N}\right),
\ee
where the  \emph{total spin} is defined as
\be
S_{\sss N}= \sum_{i \in [N]} \sigma_i \;.
\ee

%\item[(v)] The  \emph{annealed magnetization}:
%\be\label{ann_magn_{\sss N}}
%\wdt{M}_{\sss N} (\beta, B) \,=\, \wdt{\p}\left(\frac{S_{\sss N}}{N}\right) 
%\ee

\item The \emph{annealed  susceptibility}:
\be\label{susc_N}
%\chi_{\sss N}(\beta,B) \,=\, 
%\frac{\pa\rtial}{\partial B} M_{\sss N} (\beta, B),  \quad 
\wdt{\chi}_{\sss N}(\beta,B) \,=\, \frac{\partial}{\partial B} \wdt{M}_{\sss N} (\beta, B).
%= N \text{Var}_{P_{\sss N}}\left (\frac{S_{\sss N}}{N}\right ). 
\ee

%\item[(vii)] The  \emph{annealed susceptibility}:
%\be\label{ann_susc_{\sss N}}
%\wdt{\chi}_{\sss N}(\beta,B) \,=\,
%%\frac{1}{N} \sum_{{i \in [N]}, {j \in [N]}} \left[\m(\sigma_i \sigma_j)- \m(\sigma_i)\m(\sigma_j)\right] \,=\, 
%\frac{\partial}{\partial B} \wdt{M}_{\sss N} (\beta, B).  
%\ee
\end{enumerate}
\end{definition}
\subsection{Annealed Ising Model on GRG}
\label{iniz_def2}
We now specialize the previous definitions to the \col{annealed} Ising Model 
%given 
\col{on} the Generalized Random Graph. By assuming  the probability $p_{ij}$ of  each edge in $E_{\sss N}$ is that given  in \eqref{eq-prob-edge}, we 
%define the law of the graphs $\q$ and  then we 
can compute explicitly the average of the partition function \eqref{eq-part-function}. Indeed,
recalling that $I_{i,j}$ is the indicator of the edge between vertex $i$ and $j$, we can write
\be
\q \left(Z_{\sss N} \left( \beta, B \right)\right) \, 
	 = \, \q\Big( \sum_{\sigma \in \Omega_{\sss N}} 
	\exp \Big[ \beta \sum_{i<j}{I_{ij}\sigma_i \sigma_j} + B \sum_{i \in[N]} {\sigma_i}  \Big] \Big) 
\ee
and, by using the independence of the  variables $I_{i,j}$, we compute \cite{GGvdHP} that
         \be
	\q \left(Z_{\sss N} \left( \beta, B \right)\right)  = \, C\left(\beta \right)\sum_{\sigma \in \Omega_{\sss N}}
	\e^{B \sum_{i \in[N]} {\sigma_i}}\e^{\frac{1}{2}\sum_{i,j \in[N]}{J_{ij}(\beta)\sigma_i \sigma_j}},
	\label{explicit-parti-func-ann}
	\ee
where $C(\beta)>0$  is a constant and the positive \col{couplings} $J_{ij}(\beta)$ are defined as
\be
\label{couplings}
J_{ij}(\beta) = \frac12 \log\left(\frac{\e^{\beta}p_{ij}+(1-p_{ij})}{\e^{-\beta}p_{ij}+(1-p_{ij})}\right)\, . %=p_{ij}\sinh\beta-p_{ij}^2\sinh\beta(\cosh\beta-1)+\mathcal{O}(p_{ij}^3).
\ee
The r.h.s.\ of \eqref{explicit-parti-func-ann} can be seen as  the partition function of an 
%rank-1 
\col{inhomogeneous} Curie-Weiss model with couplings $\mathbf{J}$ given by \eqref{couplings}. Thus, the annealed Ising model on the $\GRGw$ is equivalent to such $\ICW$, i.e., the two measures coincide point-wise on the sample space. Our proof \col{(see eq. \eqref{couplings-expansion})} shows that \col{the} $J_{ij}(\beta)$ \col{in \eqref{couplings} are} 
%is 
close to the form in \eqref{rank-1-ICW} \col{with $\beta$ replaced by $\sinh(\beta)$}, so that the study of the annealed generalized random graph reduces to the rank-1 ICW model. 
%
%
%mean field model defined by the Hamiltonian
%\be
%H_{\sss N}(\sigma)={\frac{1}{2}\sum_{i,j \in[N]}{\beta_{ij}\sigma_i \sigma_j}}+{B \sum_{i \in[N]} {\sigma_i}}
%\ee
%in which   $(\beta_{ij})_{ij}$  represent  the  inhomogeneous interaction and  $B$ is the uniform external field. 
%In this case the annealed measure \eqref{annealing} reads as:
%\be\label{eq-measurePN0}
%P_{\sss N}\big( f(\sigma)\big) = \frac{\sum_{\sigma \in \Omega_{\sss N}} f(\sigma) \e^{\frac12 \sum_{i,j\in[N]}\beta_{ij}\sigma_i\sigma_j+{B \sum_{i \in[N]} {\sigma_i}}}}{\sum_{\sigma \in \Omega_{\sss N}} \e^{\frac12 \sum_{i,j\in[N]}\beta_{ij}\sigma_i\sigma_j+{B \sum_{i \in[N]} {\sigma_i}}}},
%\ee
%where $f(\sigma)$ is any bounded function defined in $\Omega_{\sss N}$.
%
%
%
%
%In \cite{GGvdHP} it is proven that the thermodynamic limit exists. 
Preliminarily to the statement of our main results we recall the model solution
\col{given in  \cite{GGvdHP}}. 
\change{By symmetry, we always take $B\ge 0$.}
We denote by
%this Inhomogeneous Curie-Weiss model has a well defined thermodynamic limit, i.e.,the annealed pressure $\wdt{\psi}_{\sss N} (\beta, B)$, defined in \eqref{def-annealed-press},  converges as $N\to \infty$, see Theorem~\ref{term_lim_annealed}.
%%\be
%%	\q \left(Z_{\sss N} \left( \beta, B \right)\right) = \,M_{2}(\beta) \e^{o(N)} 2^N 
%%	\mathbb{E} \left[\e^{N F_{\sss N}\left(\frac{Z}{\sqrt{N}}\right)}\right],\nonumber
%%\ee
%%\\
%\\
%\noindent
%%We are interested in the thermodynamic limit of these quantities, i.e. their limits as $N\to \infty$. 
%In this limit critical phenomena  may appear.  If $M(\beta, B):=\lim_{N\to \infty} M_{\sss N}(\beta, B)$,  provided this limit exists,  criticality manifests itself in the behavior of the {\em spontaneous magnetization}  defined as $M(\beta, 0^{+})=\lim_{B \to 0^{+}} M(\beta, B)$. In fact, 
$\beta_c$ the {\em annealed critical inverse temperature} defined as
\be
\beta_{c}:=\inf \{\beta > 0: M(\beta, 0^{+})>0 \},
\ee 
where the spontaneous magnetization is given by
\be
M(\beta, 0^{+})=\lim_{B \to 0^{+}} \lim_{N\to\infty} M_{\sss N}(\beta, B)\;.
\ee
%When $0<\beta_{c} <\infty$, we say that the system undergoes a {\em phase transition} at $\beta=\beta_{c}$. 
%
%\noindent
%The existence of the thermodynamic quantities in the infinite volume limit  and  the existence of a finite critical temperature, both   with respect to the annealed law,  is proven in \cite{GGvdHP}.
%In order to the state these results, we introduce the notation:
%	\be\label{defU}
%	\,\mathcal{U}
%	:=\, \left\{\left(\beta, B\right)\colon \beta\ge 0, B\neq0 \; 
%	\mbox{or} \, \; 0< \beta < \beta_c, B=0\right\}.
%	\ee
%where $\beta_c$ is  the inverse annealed critical temperature. 
%\SD{Shall we write $\wdt{\mathcal{U}}, \wdt{\beta}_c$ instead of $\mathcal{U}^{an}, \betacan$ to make notation consistent with $\wdt{P}, \wdt{M}, \wdt{\chi}$, etc.?}
\begin{theorem}[Thermodynamic limit for annealed Ising on $\GRGw$ and for \col{rank-1} $\ICW$ \cite{GGvdHP}]\label{term_lim_annealed}
Let  $\gs$ be a sequence  of  $\GRGw$ graphs satisfying Condition~\ref{cond-WR-GRG}.
%~(i)--(iii).  
Then,
\begin{enumerate}[(i)]

\item  For all $0\le \beta < \infty$ and for all $B\in \mathbb{R}$, the annealed pressure exists in the thermodynamic limit $N\to\infty$ and is given by
\begin{equation}\label{lim_press_ann}
\wdt{\psi} (\beta, B)  \,:=\,  \lim_{N\rightarrow \infty} \wdt{\psi}_{\sss N} (\beta, B).
\end{equation}
%Moreover, $\psi (\beta, B)$ is a non-random quantity and $\psi (\beta, B)  \,=\,  \lim_{N\rightarrow \infty} \bar\psi_{\sss N} (\beta, B)$.

\item   
%For all $\left(\beta, B\right) \in \mathcal{U}$,
The {magnetization per vertex} exists in the limit $N\to\infty$
and is given by
\begin{equation}\label{lim_magn_ann}
\wdt{M}(\beta, B) \,:=\, \lim_{N\rightarrow \infty} \wdt{M}_{\sss N} (\beta, B).
\end{equation}
The limit value $\wdt{M}$ equals: $\wdt{M}\change{(\beta,B)} \,=\, \frac{\partial}{\partial B} \wdt{\psi} (\beta, B)$ for 
$B \change{>} 0$, whereas $\wdt{M}=0$ in the region $0< \beta < \beta_c$, $B=0$. More explicitly,
\change{when $B>0$ or $B=0^+$ and $\beta > \beta_c$}
\be\label{eq-magnetization}
\wdt{M}(\beta,B)\, = \, \mathbb{E}\left[\tanh \left(\sqrt{\frac{\sinh\left(\beta\right)}{\mathbb{E}\left[W\right]}}W z^* + B\right) \right],
\ee
where $z^{*}=z^{*}(\beta,B)$ is the  
%unique solution with same sign as $B$ 
\change{unique positive solution} of the fixed point equation
\be\label{fixpGRG0}
z \, = \, \mathbb{E}\left[\tanh \left(\sqrt{\frac{\sinh\left(\beta\right)}{\mathbb{E}\left[W\right]}}W z + B\right) \sqrt{\frac{\sinh\left(\beta\right)}{\mathbb{E}\left[W\right]}}\,W\right]\;.
\ee
%and $W$ is the limiting random variable defined in Condition \ref{cond-WR-GRG}.

\item The annealed critical inverse temperature is given by
\be
\label{betac_ann}
\beta_c = \asinh \left(1/\nu\right)\;,
\ee
where 
\be\label{defnu} 
\nu = \frac{\mathbb{E}[W^2]}{\mathbb{E}[W]}\;.
\ee 
%and $W$ is the  limiting random variable introduced in  Condition \ref{cond-WR-GRG}. 

\item %For all $\left(\beta, B\right) \in \mathcal{U}$, 
The thermodynamic limit of susceptibility exists and is given by
\begin{equation}\label{lim_susc_ann}
\wdt{\chi}(\beta,B) \,:=\, \lim_{N\rightarrow \infty} \wdt{\chi}_{\sss N} (\beta, B) \,=\, \frac{\partial^2}{\partial B^2} \wdt{\psi} (\beta, B).
\end{equation}

\item For the rank-1 inhomogeneous Curie-Weiss \col{model}  
$\ICW$, 
(i)-(iv) hold with $\beta$ replaced with $\asinh(\beta)$. 
\end{enumerate}
\end{theorem}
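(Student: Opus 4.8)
The plan is to prove (v) directly for the rank-1 $\ICW$ by a Hubbard--Stratonovich reduction, and then to read off the stated substitution from the coupling identity \eqref{couplings}. For the rank-1 model the interaction depends on the configuration only through the weighted total spin $T_{\sss N}:=\sum_{i\in[N]}w_i\sigma_i$, since $\tfrac12\sum_{i,j}J_{ij}(\beta)\sigma_i\sigma_j=\tfrac{\beta}{2\ell_{\sss N}}T_{\sss N}^{2}$ by \eqref{rank-1-ICW}. First I would linearise this square by a Gaussian (Hubbard--Stratonovich) transform, which decouples the spins so that the spin sum factorises into a product of $\cosh$'s; after rescaling the auxiliary variable by $N$ this writes the partition function as a one-dimensional integral
\be
Z_{\sss N}(\beta,B)=c_{\sss N}\int_{\mathbb{R}}\e^{NG_{\sss N}(t)}\,\dint t ,
\ee
with $c_{\sss N}=O(\sqrt N)$ a subexponential prefactor and
\be
G_{\sss N}(t)=\log 2-\frac{\beta t^{2}}{2\,\mathbb{E}[W_{\sss N}]}+\frac{1}{N}\sum_{i\in[N]}\log\cosh\!\Big(\tfrac{\beta w_{i}}{\mathbb{E}[W_{\sss N}]}\,t+B\Big),\qquad \mathbb{E}[W_{\sss N}]=\ell_{\sss N}/N .
\ee

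Next I would evaluate this integral by Laplace's method. Since $\log\cosh$ is $1$-Lipschitz and $\mathbb{E}[W_{\sss N}]\to\mathbb{E}[W]$ with $\mathbb{E}[W_{\sss N}^{2}]\to\mathbb{E}[W^{2}]$ by Condition~\ref{cond-WR-GRG}, the functionals $G_{\sss N}$ converge, uniformly on compact sets, to
\be
G(t)=\log 2-\frac{\beta t^{2}}{2\,\mathbb{E}[W]}+\mathbb{E}\Big[\log\cosh\!\Big(\tfrac{\beta W}{\mathbb{E}[W]}\,t+B\Big)\Big],
\ee
so that $\wdt{\psi}(\beta,B)=\sup_{t}G(t)$, proving (i). The stationarity condition $G'(t)=0$ reads $t=\mathbb{E}[\tanh(\tfrac{\beta W}{\mathbb{E}[W]}t+B)\,W]$, and after the substitution $z=\sqrt{\beta/\mathbb{E}[W]}\,t$ it becomes exactly \eqref{fixpGRG0} with $\sinh(\beta)$ replaced by $\beta$. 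Differentiating $\wdt{\psi}$ in $B$ and using the envelope theorem at the maximiser gives the magnetisation \eqref{eq-magnetization} with the same replacement (proving (ii)), and a second differentiation, via the implicit function theorem applied to the fixed point, yields the susceptibility \eqref{lim_susc_ann} (proving (iv)). Linearising the fixed point at $t=0$, $B=0$ shows a nonzero solution appears exactly when $\beta\,\mathbb{E}[W^{2}]/\mathbb{E}[W]=\beta\nu>1$, with $\nu$ as in \eqref{defnu}, so the $\ICW$ critical value is $1/\nu$; this is \eqref{betac_ann} with $\asinh(1/\nu)$ replaced by $1/\nu$ (proving (iii)). In every formula of (i)--(iv) the inverse temperature enters only through $\sinh(\beta)$, and here it is replaced by $\beta$, which is precisely the substitution $\beta\mapsto\asinh(\beta)$ asserted in (v).

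The link to (i)--(iv) themselves is via \eqref{explicit-parti-func-ann}--\eqref{couplings}: the annealed $\GRGw$ is point-wise an inhomogeneous Curie--Weiss model with couplings \eqref{couplings}, and the expansion \eqref{couplings-expansion} gives $J_{ij}(\beta)=\tfrac{w_iw_j}{\ell_{\sss N}}\sinh(\beta)\,(1+o(1))$. Thus the rank-1 $\ICW$ at parameter $\beta$ coincides, to leading order, with the annealed $\GRGw$ at parameter $\beta'=\asinh(\beta)$ (so that $\sinh(\beta')=\beta$), which is what makes the stated substitution exact in the limit. The main obstacle is the rigour of the Laplace step: one must establish coercivity of $G$ to control the Gaussian tails and to justify discarding the prefactor $c_{\sss N}$, prove the uniform convergence $G_{\sss N}\to G$ on compacts (using the uniform integrability furnished by Condition~\ref{cond-WR-GRG}(ii) to handle the weight tails), and---crucially at $B=0^{+}$---select the dominant strictly positive maximiser to identify the spontaneous magnetisation, the $z\mapsto-z$ symmetry producing two competing saddles below but not above $\beta_c$. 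For the passage to (i)--(iv) one additionally controls the $o(1)$ corrections in \eqref{couplings-expansion}, showing they do not contribute to $\tfrac1N\log Z_{\sss N}$.
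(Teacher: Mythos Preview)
Your approach is correct and is essentially the standard one. Note, however, that the paper does \emph{not} prove Theorem~\ref{term_lim_annealed} at all: it is quoted verbatim from \cite{GGvdHP} (see the sentence ``Preliminarily to the statement of our main results we recall the model solution given in \cite{GGvdHP}''). So there is no ``paper's own proof'' to compare against here.

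That said, your Hubbard--Stratonovich plus Laplace scheme is exactly the route taken in \cite{GGvdHP}, and the paper itself uses the same linearisation in Section~\ref{sec-NC-CLT} (Lemma~\ref{pippo}) when proving the non-classical CLT, arriving at precisely the functional $G_{\sss N}(z;r)$ in \eqref{eq-defGNzr}. Your reduction of the stationarity condition to \eqref{fixpGRG0} with $\sinh(\beta)\mapsto\beta$ is correct, as is the linearisation argument for the critical value. The obstacles you flag (coercivity of $G$, uniform convergence $G_{\sss N}\to G$ via Condition~\ref{cond-WR-GRG}(ii), selection of the positive saddle at $B=0^{+}$, and control of the $o(1)$ correction in \eqref{couplings-expansion}) are the genuine technical points, and you have identified them accurately. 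For the last of these, note that the paper's Lemma~\ref{lem-fromtildetoreal} carries out exactly this comparison between $P_{\sss N}$ and $\widetilde{P}_{\sss N}$ at the level of moment generating functions, which is a finer statement than what is needed for the pressure; the same estimates (bounding $\sum_{i,j}p_{ij}^{2}$ and $\sum_{i,j}p_{ij}^{3}$ using $\max_i w_i=o(\sqrt{N})$) show that the correction is $o(N)$ and hence drops from $\tfrac1N\log Z_{\sss N}$.
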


Theorem \ref{term_lim_annealed}  shows that 
%independently of the limiting degree $W$ 
a phase transition exists for the annealed Ising model on the generalized random graph and the rank-1 inhomogeneous Curie-Weiss model.  For the rank-1 inhomogeneous Curie-Weiss model in the special case where $w_i\equiv 1$, Theorem \ref{term_lim_annealed} reproves the classical result for the Curie-Weiss model. When the weights are inhomogeneous, the critical value is instead given by $\beta_c=1/\nu$.

\change{Let us compare the annealed critical value in \eqref{betac_ann} to that in the quenched setting as derived in \cite{GGvdHP2}.
There, it is proved that the quenched critical value $\beta_c^{\mathrm{qu}}$ equals $\beta_c^{\mathrm{qu}} = \atanh(1/\nu) > \asinh(1/\nu) = \beta_c$.
Thus, the annealed critical value is smaller due to a collaboration of the Ising model and the graph properties.}

In this paper, we analyze the block spin scaling limits at $\beta_{c}$ and we study the universality class of the model.
For this, we define the annealed critical exponents analogous to the random quenched critical exponents as in~\cite{DGH2}:

\begin{definition}[Annealed critical exponents]\label{def-CritExp}
The {\em annealed critical exponents} $\boldsymbol{\beta,\delta,\gamma,\gamma'}$
are defined by:
\begin{align}
\wdt{M}(\beta,0^+) &\asymp (\beta-\betacan)^{\boldsymbol{\beta}},  &&{\rm for\ } \beta \searrow \betacan; \label{eq-def-critexp-beta}\\
\wdt{M}(\betacan, B) &\asymp B^{1/\boldsymbol{\delta}},  &&{\rm for\ } B \searrow 0;
\label{eq-def-critexp-delta}\\
\wdt{\chi}(\beta, 0^+) &\asymp (\betacan-\beta)^{-\boldsymbol{\gamma}},  &&{\rm for\ } \beta \nearrow \betacan; \label{eq-def-critexp-gamma}\\
\wdt{\chi}(\beta, 0^+) &\asymp (\beta-\betacan)^{-\boldsymbol{\gamma'}},  &&{\rm for\ } \beta \searrow \betacan,\label{eq-def-critexp-gamma'}
\end{align}
where we write  $f(x) \asymp g(x)$ if the ratio $f(x)/g(x)$ is bounded away from 0 and infinity for the specified limit.
\end{definition}
\noindent
\col{We remark that, as is customary in the literature, we use the same letter for the inverse temperature $\beta$ and one of the magnetization critical exponent $\boldsymbol{\beta}$. In this paper they are distinguished by the use of the plain, respectively bold, character.}

%%%%%%%%%%%%%%%%%%%%%%%%%%%%%%%%%%%%%%%%%%%%%%%%%%%%%%%%%%%%%%%%%%%%%%%%%%%%%%%%%%%%%%%%%%%%%%
%%%%%%%%%%%%%%%%%%%%%%%%%%%%%%%%%%%%%%%%%%%%%%%%%%%%%%%%%%%%%%%%%%%%%%%%%%%%%%%%%%%%%%%%%%%%%%

\subsection{Main results}\label{sec-results}
%The behavior of the Inhomogeneous Curie-Weiss model at criticality can be described by analyzing the magnetization $M(\beta,B)$ and susceptibility $\chi(\beta,B)$ 
%at the critical point $(\beta_{c},0)$ and, at the same point,  the asymptotic properties of the total spin $S_{\sss N}$.  We now present our main results that concern the above mentioned issues.
\noindent 
%We now present our main results. 
\col{We start} by proving that the annealed critical exponents for the magnetization and the susceptibility
take the values conjectured in \cite{KBHK}.
%same values as the quenched ones given in~\cite{DGH2}:
\begin{theorem}[Annealed critical exponents] \label{thm-CritExp} Let  $\gs$ be a sequence  of  $\GRGw$ graphs 
%satisfying 
\col{fulfilling} Conditions~\ref{cond-WR-GRG}
%~(i)--(iii) 
and \ref{cond-WR-GRG-tail}.
%~(ii) or~(i), respectively. 
Then, the annealed critical exponents 
\col{defined in Definition~\ref{def-CritExp} using $\beta_c$ given in 	\eqref{betac_ann}} 
exist and satisfy
\begin{center}
{\renewcommand{\arraystretch}{1.2}
\renewcommand{\tabcolsep}{1cm}
\begin{tabular}[c]{c|ccc}
 &  $\tau\in(3,5)$ & $\expec[W^4]<\infty$    \\
\hline
$\boldsymbol{\beta}$  & $1/(\tau-3)$ & $1/2$ \\
$\boldsymbol{\delta}$ & $\tau-2$ & $3$\\
$\boldsymbol{\gamma}=\boldsymbol{\gamma'}$ & $1$ & $1$
\end{tabular}
}
\end{center}
%\todo{Exchange the cases?}
For the boundary case $\tau=5$ there are the following logarithmic corrections for $\boldsymbol{\beta}=1/2$ and $\boldsymbol{\delta}=3$:
	\be
	\label{log-corr-M-tau5}
	\wdt{M}(\beta,0^+) \asymp \Big(\frac{\beta-\betacan}{\log{1/(\beta-\betacan)}}\Big)^{1/2} \quad {\rm for\ } \beta \searrow \betacan,
	\qquad
	\wdt{M}(\betacan, B) \asymp \Big(\frac{B}{\log(1/B)}\Big)^{1/3} \quad {\rm for\ } B \searrow 0.
	\ee
The same results hold for the rank-1 inhomogeneous Curie-Weiss model  $\ICW$, 
%at its critical value $\beta_c$.
\col{the critical exponents being now defined using $\beta_c= 1/\nu$.}
\end{theorem}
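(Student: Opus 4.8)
\emph{Proof plan.} The plan is to reduce all four exponents to an analysis of the fixed-point equation \eqref{fixpGRG0} and the magnetization formula \eqref{eq-magnetization} of Theorem~\ref{term_lim_annealed}. First I would observe that it suffices to treat the $\GRGw$ case: the rank-1 $\ICW$ statements follow by replacing $\sinh(\beta)$ with $\beta$ throughout, and since $\asinh$ is smooth and strictly increasing near $\betacan$, the increments $(\beta-\betacan)$, $(\sinh\beta-\sinh\betacan)$ and (for $\ICW$) $(\beta-1/\nu)$ are all comparable, so the $\asymp$-exponents coincide. Writing $t=t(\beta)=\sqrt{\sinh(\beta)/\mathbb{E}[W]}$ and substituting $x=t\,z$ turns \eqref{fixpGRG0} at $B=0$ into $x=t^2\,\mathbb{E}[W\tanh(Wx)]$, while \eqref{eq-magnetization} becomes $\wdt{M}=\mathbb{E}[\tanh(Wx^\ast+B)]$. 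Since $t^2\mathbb{E}[W^2]=\sinh(\beta)\,\nu$ by \eqref{defnu}, the linearization of the fixed point is exactly critical when $\sinh(\beta)\nu=1$, recovering \eqref{betac_ann}, and everything is governed by the deviation $\delta:=t^2\mathbb{E}[W^2]-1\asymp(\beta-\betacan)$ together with the nonlinear remainder
\[
\psi(x):=\mathbb{E}\big[W\big(Wx-\tanh(Wx)\big)\big]=\mathbb{E}[W^2]x-\mathbb{E}[W\tanh(Wx)].
\]

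The analytic heart of the proof is the small-$x$ asymptotics of $\psi$. Using $y-\tanh(y)\asymp y^3$ for $y\to0$ and $y-\tanh(y)\asymp y$ for $y\to\infty$, I would split $\mathbb{E}[W(Wx-\tanh(Wx))]$ at the crossover scale $W\sim 1/x$ and estimate each piece with the tail bound of Condition~\ref{cond-WR-GRG-tail}. This yields $\psi(x)\asymp x^3$ when $\mathbb{E}[W^4]<\infty$, $\psi(x)\asymp x^{\tau-2}$ when $W$ obeys the power law \eqref{eq-power-law} with $\tau\in(3,5)$, and $\psi(x)\asymp x^3\log(1/x)$ in the boundary case $\tau=5$. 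The same splitting applied to $\psi'(x)=\mathbb{E}[W^2\tanh^2(Wx)]$ gives $\psi'(x)\asymp\psi(x)/x$ with the correct proportionality, namely $\psi'(x)\sim(\tau-2)\psi(x)/x$ in the power-law regime and $\psi'(x)\sim 3\psi(x)/x$ when $\mathbb{E}[W^4]<\infty$. This step, obtaining not just the order but the regular-variation relation between $\psi$ and $\psi'$ under the two-sided bound \eqref{eq-power-law}, is the main obstacle, since it requires Karamata/monotone-density-type control rather than a single leading term.

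Given these expansions, the exponents $\boldsymbol{\beta}$ and $\boldsymbol{\delta}$ follow by solving the fixed point. For $\boldsymbol{\beta}$ I set $B=0$ and $\beta\searrow\betacan$: the fixed-point equation becomes $\delta\,x=t^2\psi(x)$, so with $\psi(x)\asymp x^{\alpha}$ (where $\alpha=\tau-2$ or $\alpha=3$) one gets $x^\ast\asymp\delta^{1/(\alpha-1)}$, and since $\mathbb{E}[W]<\infty$ the magnetization satisfies $\wdt{M}=\mathbb{E}[\tanh(Wx^\ast)]\asymp\mathbb{E}[W]x^\ast$, giving $\boldsymbol{\beta}=1/(\tau-3)$, respectively $\boldsymbol{\beta}=1/2$. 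For $\boldsymbol{\delta}$ I set $\beta=\betacan$ and $B\searrow0$; here the linear term cancels and the balance is $\mathbb{E}[W]\,B\asymp\psi(x)\asymp x^{\alpha}$ (one checks $x^\ast\gg B$, so $B$ is negligible at the crossover and does not alter the exponent of $\psi$), yielding $x^\ast\asymp B^{1/\alpha}$ and $\wdt{M}\asymp B^{1/(\tau-2)}$, respectively $B^{1/3}$, i.e. $\boldsymbol{\delta}=\tau-2$, respectively $3$. The logarithmic corrections \eqref{log-corr-M-tau5} at $\tau=5$ come from inverting the modified relations $\delta\asymp x^2\log(1/x)$ and $B\asymp x^3\log(1/x)$.

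Finally, for the susceptibility I differentiate \eqref{eq-magnetization} and \eqref{fixpGRG0} implicitly in $B$. Writing $A_k:=\mathbb{E}[\mathrm{sech}^2(Wx^\ast+B)\,W^k]$ evaluated at $B=0^+$, the implicit function theorem gives
\[
\wdt{\chi}(\beta,0^+)=A_0+\frac{t^2A_1^2}{\,1-t^2A_2\,},
\]
so the divergence is driven entirely by $1-t^2A_2\to0$. For $\boldsymbol{\gamma}$ ($\beta\nearrow\betacan$) one has $x^\ast=0$, hence $1-t^2A_2=1-\sinh(\beta)\nu\asymp(\betacan-\beta)$ and $\boldsymbol{\gamma}=1$. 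For $\boldsymbol{\gamma'}$ ($\beta\searrow\betacan$) I use $A_2=\mathbb{E}[W^2]-\psi'(x^\ast)$ to write $1-t^2A_2=-\delta+t^2\psi'(x^\ast)$; substituting $\delta=t^2\psi(x^\ast)/x^\ast$ from the fixed point and the regular-variation relation $\psi'(x^\ast)\sim\alpha\,\psi(x^\ast)/x^\ast$ yields $1-t^2A_2\sim(\alpha-1)\delta\asymp(\beta-\betacan)$ with a strictly positive constant (as $\alpha-1=\tau-3>0$, respectively $2>0$). This gives $\boldsymbol{\gamma'}=1=\boldsymbol{\gamma}$. This last cancellation is delicate precisely because it subtracts two quantities of the same order, which is why the regular-variation input from the previous step, rather than a mere order bound on $\psi$, is essential.
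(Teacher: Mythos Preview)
Your reduction to the fixed point, the introduction of $\psi(x)=\mathbb{E}[W(Wx-\tanh(Wx))]$, the truncation at the scale $W\sim1/x$, and the treatment of $\boldsymbol{\beta}$, $\boldsymbol{\delta}$, $\boldsymbol{\gamma}$ are all correct and essentially the same as what the paper does (the paper works with $z^*=x^*/t$ and writes out Taylor remainders explicitly, but the content is identical).

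The genuine gap is in your treatment of $\boldsymbol{\gamma'}$. You correctly identify that $1-t^2A_2=-\delta+t^2\psi'(x^*)$ is a difference of two quantities of the same order, and that a mere $\asymp$ bound on each is not enough. But the remedy you propose, namely a regular-variation relation $\psi'(x)\sim(\tau-2)\,\psi(x)/x$ obtained by ``Karamata/monotone-density-type control'', cannot be extracted from Condition~\ref{cond-WR-GRG-tail}(ii). That condition only gives two-sided bounds $c_W w^{-(\tau-1)}\le\mathbb{P}(W>w)\le C_W w^{-(\tau-1)}$; the tail need not be regularly varying (e.g.\ $\mathbb{P}(W>w)=(2+\sin\log w)w^{-(\tau-1)}$), so no Karamata-type theorem applies and the ratio $\psi'(x)/(\psi(x)/x)$ need not converge at all.

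What does work, and what the paper in effect does, is to bypass the ratio entirely and bound the \emph{difference} directly. Using the fixed-point identity $\delta=t^2\psi(x^*)/x^*$ one has
\[
1-t^2A_2 \;=\; t^2\Big(\psi'(x^*)-\frac{\psi(x^*)}{x^*}\Big)\;=\;t^2\,\mathbb{E}\big[W^2\,g(Wx^*)\big],
\qquad g(y):=\tanh^2 y+\frac{\tanh y}{y}-1.
\]
The pointwise inequality $g(y)>0$ for all $y>0$ (equivalently $y^2\tanh^2 y>y^2-y\tanh y$, which is exactly the inequality the paper proves when showing $m(\tau)/K(\tau)>1$) gives strict positivity, and the same splitting at $W\sim1/x^*$ that you use for $\psi$ gives two-sided bounds $\mathbb{E}[W^2 g(Wx^*)]\asymp (x^*)^{\tau-3}$ (respectively $\asymp (x^*)^2$ when $\mathbb{E}[W^4]<\infty$), hence $1-t^2A_2\asymp\delta\asymp(\beta-\beta_c)$ and $\boldsymbol{\gamma'}=1$. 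So your framework is fine; only the specific tool you invoke for the cancellation step must be replaced by this pointwise-positivity argument.
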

\medskip

\begin{remark}[Comparison to the Curie-Weiss model]
For the rank-1 inhomogeneous Curie-Weiss model,  we see that the inhomogeneity does not change the critical behavior when the fourth moment of the weight distribution remains finite, but it does when the fourth moment of the weight distribution increases to infinity. In the latter case, we call the inhomogeneity {\em relevant}.
\end{remark}

\begin{remark}[Comparison to the quenched case]
In \cite{DGH2}, the first two and fourth authors of this paper have shown that the {\em same} critical exponents hold for the {\em quenched} setting of the Ising model on power-law random graphs, such as $\GRGw$, under the assumptions in Conditions~\ref{cond-WR-GRG} and \ref{cond-WR-GRG-tail}. In \cite{DGH2}, however, we only managed to prove a one-sided bound on $\boldsymbol{\gamma}'$. Thus, our results show that for $\GRGw$ both the annealed and quenched Ising model have the same critical exponents, but a different critical value. This is a strong example of universality.
\end{remark}

\begin{remark}[Extension of $\boldsymbol{\gamma}=1$]
The result  $\boldsymbol{\gamma}=1$ holds under more general conditions, i.e., $\expec[W^{2}]<\infty$. See Theorem~\ref{theorem_gamma} below.
\end{remark}
\noindent
From the previous theorem we can also derive the joint scaling of the magnetization as $(\beta,B)\searrow(\betacan,0)$:
\begin{corollary}[Joint scaling in $B$ and $(\beta-\betacan)$]\label{cor-JointScaling}
For $\tau\neq5$,
\be
\wdt{M}(\beta,B) = \Theta\big( (\beta-\betacan)^{\boldsymbol{\beta}} + B^{1/\boldsymbol{\delta}} \big),
\ee
where $f(\beta,B)=\Theta(g(\beta,B))$ means that there exist constants $c_1,C_1>0$ such that $c_1 g(\beta,B) \leq f(\beta,B) \leq C_1 g(\beta,B)$ for all $B\in(0,\varepsilon)$ and $\beta \in (\betacan,\betacan+\varepsilon)$ with $\varepsilon$ small enough.
\noindent
For $\tau=5$,
\be
\wdt{M}(\beta,B) = \Theta\Big(\Big(\frac{\beta-\betacan}{\log{1/(\beta-\betacan)}}\Big)^{1/2}+\Big(\frac{B}{\log(1/B)}\Big)^{1/3}\Big).
\ee
\end{corollary}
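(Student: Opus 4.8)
The plan is to establish the two inequalities in the claimed $\Theta$ separately. Write $t:=\beta-\betacan$ and let $g(\beta,B)$ denote the claimed envelope, i.e.\ $g=(\beta-\betacan)^{\boldsymbol{\beta}}+B^{1/\boldsymbol{\delta}}$ for $\tau\neq5$ and the log-corrected expression for $\tau=5$. The lower bound $\wdt{M}(\beta,B)\geq c_1 g(\beta,B)$ will be soft, using only monotonicity together with the one-variable asymptotics already recorded in Theorem~\ref{thm-CritExp}. The upper bound $\wdt{M}(\beta,B)\leq C_1 g(\beta,B)$ is the substantive part and does \emph{not} follow from the marginal scalings alone (a spurious cross term such as $t^{1/10}B^{1/10}$ respects both marginals yet violates the envelope); it will require revisiting the fixed-point equation~\eqref{fixpGRG0}.

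For the lower bound I would first record that $\wdt{M}(\beta,B)$ is non-decreasing in each of $\beta$ and $B$. This is the standard ferromagnetic monotonicity, and it can be read off \eqref{eq-magnetization}--\eqref{fixpGRG0} directly: the coefficient $a(\beta):=\sqrt{\sinh(\beta)/\mathbb{E}[W]}$ is increasing in $\beta$, the map $z\mapsto \mathbb{E}[\tanh(aWz+B)\,aW]$ is pointwise increasing in $a$ and $B$ and concave in $z>0$, so its unique positive fixed point $z^*(\beta,B)$ increases in both parameters, and finally $\wdt{M}=\mathbb{E}[\tanh(aWz^*+B)]$ increases in $a,z^*,B$. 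Monotonicity in $B$ gives $\wdt{M}(\beta,B)\geq\wdt{M}(\beta,0^+)\asymp(\beta-\betacan)^{\boldsymbol{\beta}}$, and monotonicity in $\beta$ (using $\beta>\betacan$) gives $\wdt{M}(\beta,B)\geq\wdt{M}(\betacan,B)\asymp B^{1/\boldsymbol{\delta}}$, both by Theorem~\ref{thm-CritExp}. Since $\max\{a,b\}\geq\tfrac12(a+b)$, the lower bound follows, with the $\tau=5$ version obtained identically from the log-corrected marginals in \eqref{log-corr-M-tau5}.

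For the upper bound I would return to the reduced fixed-point equation and extract the \emph{effective relation} governing $z^*$ jointly in $t$ and $B$. Expanding \eqref{fixpGRG0} about the critical point, where the derivative of the fixed-point map at $z=0$ equals $\sinh(\betacan)\nu=1$ by \eqref{defnu}, and isolating the marginal linear part, yields a relation of the shape $z^{\boldsymbol{\delta}}\asymp c_1\,t\,z+c_2\,B$ with $c_1,c_2,c_3>0$; the nonlinearity $c_3 z^{\boldsymbol{\delta}}$ comes from the cubic term of $\tanh$ when $\mathbb{E}[W^4]<\infty$ ($\boldsymbol{\delta}=3$) and from the heavy-tailed contribution $a^2\mathbb{E}[W^2]z-\mathbb{E}[\tanh(aWz)aW]\asymp z^{\tau-2}$ when $\tau\in(3,5)$ ($\boldsymbol{\delta}=\tau-2$). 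Given this relation I would run a two-case decoupling, using $\boldsymbol{\beta}=1/(\boldsymbol{\delta}-1)$: if $c_3 z^{\boldsymbol{\delta}-1}\geq 2c_1 t$ then $c_1 t z\leq\tfrac12 c_3 z^{\boldsymbol{\delta}}$, whence $\tfrac12 c_3 z^{\boldsymbol{\delta}}\leq c_2 B$ and $z\lesssim B^{1/\boldsymbol{\delta}}$; otherwise $z^{\boldsymbol{\delta}-1}<(2c_1/c_3)t$ gives $z\lesssim t^{1/(\boldsymbol{\delta}-1)}=t^{\boldsymbol{\beta}}$. In either case $z^*\leq C(t^{\boldsymbol{\beta}}+B^{1/\boldsymbol{\delta}})$. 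Since $\tanh$ is subadditive and bounded by its argument on the positive axis, $\wdt{M}=\mathbb{E}[\tanh(aWz^*+B)]\leq a\mathbb{E}[W]z^*+B\leq C(z^*+B)$, and $B\leq B^{1/\boldsymbol{\delta}}$ for small $B$ because $\boldsymbol{\delta}>1$; the upper bound on $\wdt{M}$ follows.

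The boundary case $\tau=5$ is handled the same way, now with the logarithmically corrected effective relation $z^{3}\log(1/z)\asymp c_1 t z+c_2 B$; the same split produces the envelopes $(t/\log(1/t))^{1/2}$ and $(B/\log(1/B))^{1/3}$, and $B\ll(B/\log(1/B))^{1/3}$ keeps the $+B$ term harmless. I expect the main obstacle to be precisely the \emph{uniform} derivation of the effective relation in the joint regime: the $\tanh$ expansion and the truncation of the diverging moments must be controlled with errors that are negligible as $(t,B)\to(0,0)$ along \emph{all} paths, not merely along the two coordinate axes, and in the $\tau=5$ case the slowly varying logarithmic factors must be tracked through the comparison without losing the exact power of the logarithm. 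Much of this uniform control should already be available from the estimates built in the proof of Theorem~\ref{thm-CritExp}, which the argument will reuse.
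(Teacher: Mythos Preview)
Your proposal is correct and matches the intended approach: the paper's own proof simply cites \cite[Corollary~2.9]{DGH2}, and what you outline is exactly the natural adaptation of that argument to the present setting, using the uniform bounds already established here in Propositions~\ref{prop-upperbz} and~\ref{prop-lowerbz}.

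One remark that would tighten the write-up: the ``main obstacle'' you flag at the end---uniform control of the effective relation along all paths $(t,B)\to(0,0)$---is in fact already resolved. Proposition~\ref{prop-upperbz} is stated and proved for \emph{all} $\beta\geq\betacan$ and $B>0$, not merely along the coordinate axes; rearranging \eqref{eq-upbdzE4} gives precisely
\[
C_1 z^{*\boldsymbol{\delta}} \;\leq\; (\sinh(\beta)\nu-1)\,z^* + \sqrt{\expec[W]\sinh(\beta)}\,B,
\]
which is the inequality $c_3 z^{\boldsymbol{\delta}}\leq c_1 t z + c_2 B$ you need, uniformly. Your two-case split then goes through verbatim, and the bound $\wdt{M}\leq B+\sqrt{\expec[W]\sinh(\beta)}\,z^*$ is \eqref{up_b_M}. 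For $\tau=5$, the inversion of $z^{*2}\log(1/z^*)\lesssim t$ and $z^{*3}\log(1/z^*)\lesssim B$ is handled exactly as in the footnotes to the proof of Theorem~\ref{theo-beta-delta}. So you may drop the hedging in the final paragraph and simply point to Proposition~\ref{prop-upperbz} as the source of the joint upper bound.
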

\noindent
Our second main result concerns the scaling limit at criticality. 
The next theorem provides the correct scaling and the limit distribution of $S_{\sss N}$ at criticality (for a heuristic derivation of the scaling, see the discussion in Section~\ref{sec-discussion}). For $\GRGw$, we define the inverse temperature sequence 
\be
\label{ctn}
\beta_{c,N} = \asinh (1/\nu_{\sss N}),
\ee
where
\be
\nu_{\sss N} = \frac{\mathbb{E}[W_{\sss N}^2]}{\mathbb{E}[W_{\sss N}]},
\ee
so that $\beta_{c,N} \rightarrow \beta_c$ for $N\rightarrow \infty$. For rank-1 $\ICW$, we replace \co{$\beta$ by $\asinh(\beta)$}, so that $\beta_{c,N} =1/\nu_{\sss N}$. Our main result is the following:

\begin{theorem}[Non-classical limit theorem at criticality]\label{thm-nonclassicalclt}
Let  $\gs$ be a sequence  of  $\GRGw$ graphs satisfying Conditions~\ref{cond-WR-GRG}
%~(i)--(v), or Conditions~\ref{cond-WR-GRG}~(i)--(iii) 
and Condition~\ref{cond-det}
%, respectively, 
and let $\boldsymbol{\delta}$ have the respective values stated in Theorem~\ref{thm-CritExp}. 
Then, there exists a random variable $X$ such that
\be
\frac{S_{\sss N}}{N^{\boldsymbol{\delta}/(\boldsymbol{\delta}+1)}} \stackrel{\cal D}{\longrightarrow} X, \quad \quad {\rm as\ }N\rightarrow\infty,
\ee
where the convergence is w.r.t.\ the  measure $P_{\sss N}$ at inverse temperature $\co{\beta_{c,N} = \asinh (1/\nu_{\sss N})}$ and external field $B=0$.
The random variable $X$ has a density proportional to $\exp(-f(x))$ with
	\be\label{eq-limitingdensity}
	f(x) = \left\{ \begin{array}{ll} \frac{1}{12}\frac{\expec[W^4]}{\expec[W]^4}x^4 
			&\quad {\rm when\ } 		\expec[W^4]<\infty, \\
            	\sum_{i \geq 1} \left(\frac12\left(\frac{\tau-2}{\tau-1} \, x\, i^{-1/(\tau-1)}\right)^2
			-\log\cosh\left(\frac{\tau-2}{\tau-1} \, x\, i^{-1/(\tau-1)}\right)\right) 
					&\quad {\rm when\ } \tau\in(3,5).
						\end{array}\right.
	\ee
The same result holds for the rank-1 inhomogeneous Curie-Weiss model at its critical value $\co{\beta_{c,N} = 1/\nu_{\sss N}}$.
\end{theorem}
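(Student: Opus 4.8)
The plan is to reduce both models to an exact rank-one interaction and then linearise the quadratic Hamiltonian by a Hubbard--Stratonovich transformation, turning the problem into a Laplace-type analysis of a one-dimensional integral. First I would reduce the annealed $\GRGw$ model to the rank-1 $\ICW$: expanding the couplings \eqref{couplings} yields $J_{ij}(\beta)=\sinh(\beta)\,w_iw_j/\ell_{\sss N}+\mathrm{err}_{ij}$, and I would show the non-product corrections $\mathrm{err}_{ij}$ contribute negligibly to the Hamiltonian at the relevant scale, so it suffices to treat the exact product model with effective coupling $b=\sinh(\beta_{c,N})=1/\nu_{\sss N}$ (resp.\ $b=\beta_{c,N}=1/\nu_{\sss N}$ for $\ICW$). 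Writing $S^w_{\sss N}=\sum_i w_i\sigma_i$, the Hamiltonian at $B=0$ is $H_{\sss N}(\sigma)=\tfrac{b}{2\ell_{\sss N}}(S^w_{\sss N})^2$, and the Gaussian identity $\e^{\frac{b}{2\ell_{\sss N}}(S^w_{\sss N})^2}\propto\int_{\mathbb R}\e^{-\frac{\ell_{\sss N}}{2b}s^2+sS^w_{\sss N}}\,\dint s$ decouples the spins: conditionally on the scalar auxiliary field $s$ the $\sigma_i$ are independent with $\E[\sigma_i\mid s]=\tanh(sw_i)$, while the marginal density of $s$ is proportional to $\e^{-G_{\sss N}(s)}$ with $G_{\sss N}(s)=\tfrac{\ell_{\sss N}}{2b}s^2-\sum_i\log\cosh(sw_i)$.

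At criticality one has $\ell_{\sss N}/b=\ell_{\sss N}\nu_{\sss N}=\sum_i w_i^2$, so the Gaussian term exactly cancels the quadratic part of $\sum_i\log\cosh(sw_i)$ and $G_{\sss N}(s)=\sum_i g(sw_i)$ with $g(y)=\tfrac{y^2}{2}-\log\cosh y\ge0$ and $g(y)=\tfrac{y^4}{12}+O(y^6)$. I would then rescale $s=\lambda N^{-1/(\boldsymbol\delta+1)}$. In the finite-fourth-moment case ($\boldsymbol\delta+1=4$), I expect $G_{\sss N}(\lambda N^{-1/4})\to\tfrac{\lambda^4}{12}\E[W^4]$; in the power-law case ($\boldsymbol\delta+1=\tau-1$) the deterministic weights \eqref{eq-precisepowerlaw} make $sw_i=\lambda c_{w}\,i^{-1/(\tau-1)}$ exactly $N$-independent, whence $G_{\sss N}(\lambda N^{-1/(\tau-1)})=\sum_{i=1}^N g(\lambda c_{w} i^{-1/(\tau-1)})\to\sum_{i\ge1}g(\lambda c_{w} i^{-1/(\tau-1)})=:G_\infty(\lambda)$, the sum converging precisely because $4/(\tau-1)>1$ for $\tau<5$.

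Next I would show the conditional spin fluctuations are negligible and perform the change of variables. Given $s$, $\E[S_{\sss N}\mid s]=\sum_i\tanh(sw_i)=:m_{\sss N}(s)$ and $\Var(S_{\sss N}\mid s)\le N$; since $N^{1/2}\ll N^{\boldsymbol\delta/(\boldsymbol\delta+1)}$ (as $\boldsymbol\delta/(\boldsymbol\delta+1)>1/2$ in both regimes), the conditional law of $S_{\sss N}/N^{\boldsymbol\delta/(\boldsymbol\delta+1)}$ concentrates at $m_{\sss N}(s)/N^{\boldsymbol\delta/(\boldsymbol\delta+1)}$. Using $\tanh(y)=y+O(y^3)$ and $\E[W_{\sss N}]\to\E[W]$, one checks that $m_{\sss N}(\lambda N^{-1/(\boldsymbol\delta+1)})/N^{\boldsymbol\delta/(\boldsymbol\delta+1)}\to\lambda\E[W]$ in both regimes. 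Hence, for bounded continuous $h$, the Hubbard--Stratonovich representation gives $\E_{P_{\sss N}}[h(S_{\sss N}/N^{\boldsymbol\delta/(\boldsymbol\delta+1)})]=\frac{\int \e^{-G_{\sss N}(s)}\,\E[h\mid s]\,\dint s}{\int \e^{-G_{\sss N}(s)}\,\dint s}\to\frac{\int h(\lambda\E[W])\e^{-G_\infty(\lambda)}\,\dint\lambda}{\int \e^{-G_\infty(\lambda)}\,\dint\lambda}$, and the substitution $x=\lambda\E[W]$ identifies the limit law as the density $\propto\e^{-f(x)}$ with $f(x)=G_\infty(x/\E[W])$; in the power-law case the identity $c_{w}/\E[W]=(\tau-2)/(\tau-1)$, coming from $\E[W]=c_{w}\tfrac{\tau-1}{\tau-2}$, reproduces \eqref{eq-limitingdensity} exactly.

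The analytic skeleton above is routine; the real work is justifying the passage to the limit in the auxiliary-field integral uniformly in $N$. The hard part will be tightness, namely a uniform coercive lower bound of the type $G_{\sss N}(\lambda N^{-1/(\boldsymbol\delta+1)})\ge c\,\min(\lambda^2,\lambda^{\,\boldsymbol\delta+1})$ ensuring no mass escapes to infinity, together with the remainder estimates: in the finite-moment case controlling $\sum_i\big(g(sw_i)-\tfrac{(sw_i)^4}{12}\big)$ with only four moments available, which forces a uniform-integrability argument for $\{W_{\sss N}^4\}$ rather than a naive sixth-order Taylor expansion; and in the power-law case dominating the triangular-array sum $\sum_{i=1}^N g(\lambda c_{w} i^{-1/(\tau-1)})$ uniformly on compacts while separately handling the finitely many heaviest weights, for which $sw_i=O(1)$ and $g$ is not quartic. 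Finally, the coupling-correction control that reduces $\GRGw$ to the exact product model must be shown to disturb neither the critical cancellation nor the limiting potential $G_\infty$.
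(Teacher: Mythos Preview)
Your proposal is correct and follows the same core strategy as the paper: reduce to the exact rank-1 model, apply the Hubbard--Stratonovich identity, exploit the critical cancellation $G_{\sss N}(s)=\sum_i g(sw_i)$ with $g(y)=\tfrac{y^2}{2}-\log\cosh y$, rescale by $N^{-1/(\boldsymbol\delta+1)}$, and justify the limit by dominated convergence. The one substantive difference is how $S_{\sss N}$ is tied to the auxiliary field: the paper computes the moment generating function of $S_{\sss N}/N^\lambda$ directly, carrying the tilt $r/N^\lambda$ inside the $\log\cosh$ (its function $G_{\sss N}(z;r)$), whereas you condition on the auxiliary field and use $\Var(S_{\sss N}\mid s)\le N\ll N^{2\boldsymbol\delta/(\boldsymbol\delta+1)}$ to replace $S_{\sss N}/N^{\boldsymbol\delta/(\boldsymbol\delta+1)}$ by $m_{\sss N}(s)/N^{\boldsymbol\delta/(\boldsymbol\delta+1)}\to\lambda\,\expec[W]$. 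Both routes work; the MGF route is a bit more economical since it avoids separately controlling the cubic remainder in $m_{\sss N}$ (which for $\tau\in(4,5)$ diverges before normalisation and needs its own estimate), while your route makes the two-scale structure more transparent. For the sixth-order remainder in the $\expec[W^4]<\infty$ case the paper does essentially what you anticipate, bounding $\expec[W_{\sss N}^6]\le(\max_i w_i)^2\,\expec[W_{\sss N}^4]$ together with $\max_i w_i=o(N^{1/4})$, which is a concrete form of your uniform-integrability argument. The reduction from $P_{\sss N}$ to the exact product measure $\widetilde P_{\sss N}$ is handled in a separate lemma, again via the MGF and a second auxiliary-field computation for the correction term.
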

\medskip

We will see that in both the case where the fourth moment is finite as well as when it is infinite,
	\be
	\lim_{x\rightarrow\infty}\frac{f(x)}{x^{1+\boldsymbol{\delta}}} = C,
	\ee
with
	\be\label{eq-limitingconstant}
	C = \left\{ \begin{array}{ll} \frac{1}{12} \frac{\expec[W^4]}{\expec[W]^4} 
			&\quad {\rm when\ } \expec[W^4]<\infty, \\
           \left(\frac{\tau-2}{\tau-1}\right)^{\tau-1} \int_0^\infty \left(\frac12 y^{-2/(\tau-1)}
           		- \log\cosh y^{-1/(\tau-1)}\right) \dint y &\quad {\rm when\ } \tau\in(3,5).
						\end{array}\right.
	\ee
This result extends the non-classical limit theorem for the Curie-Weiss model to the annealed $\GRGw$ and the rank-1 $\ICW$.

%%%%%%%%%%%%%%%%%%%%%%%%%%%%%%%%%%%%%%%%%%%%%%%%%%%%%%%%%%%%%%%%%%%%%%%%%%%%%%%%%%%%%%%%%%%%%%
%%%%%%%%%%%%%%%%%%%%%%%%%%%%%%%%%%%%%%%%%%%%%%%%%%%%%%%%%%%%%%%%%%%%%%%%%%%%%%%%%%%%%%%%%%%%%%

\subsection{Discussion}\label{sec-discussion}

\paragraph{Random weights.}
Instead of choosing the weights $\boldsymbol{w}$ deterministically, one can also choose the weights i.i.d.\ according to some random variable $W$, with $\expec[W^4]<\infty$. In this case, Condition~\ref{cond-WR-GRG} holds a.s.\ by the laws of large numbers. Hence, if $Q_{\sss N}$ denotes the average over all graphs drawn according to the GRG {\em conditioned} on the weights, then our results also hold a.s. When in the annealing also the average over the weights is taken, then the model becomes unphysical, because the pressure becomes infinite as is proved in~\cite{GGvdHP}.

\paragraph{Critical 
%temperature and 
exponents.}
%Theorem  \ref{term_lim_annealed} shows that annealing changes the critical temperature. 
%Indeed, the annealed critical inverse temperature $\beta_c$ \col{given in \eqref{betac_ann}} 
%is \col{strictly smaller} than the quenched one  $\beta_c^{\mathrm{qu}}=\atanh(1/\nu)$, whenever it exists, 
%\col{i.e. for $\nu>1$.}
Theorem \ref{thm-CritExp} implies that the annealed exponents are the same as in the quenched case.
Indeed, by \eqref{degree}, the condition $\mathbb{E}(W^4)<\infty$ is equivalent to $\mathbb{E}(K^3)<\infty$, where $K$ is the
forward degree of the branching process describing the local structure of $\GRGw$. 
Thus the  conditions in Theorem \ref{thm-CritExp} defining the universality classes are the same as those in 
Theorem 2.8 in \cite{DGH2}.

\paragraph{Scaling limit of block spin variable.}
In~\cite{GGvdHP}, it is proved that the classical central limit theorem for the total spin $S_{\sss N}$ holds in the 
one-phase region \col{of the annealed Ising model} 
%$\mathcal{U}$, 
i.e.,
\be
\label{hello}
\frac{S_{\sss N}-P_{\sss N}(S_{\sss N})}{\sqrt{N}} \stackrel{\mathcal{D}}{\longrightarrow} \mathcal{N}(0,\chi), \quad {\rm w.r.t.\ }P_{\sss N}, \quad {\rm as\ }N\rightarrow\infty.
\ee
\change{In \cite{GGvdHP2} we prove the analogous result in the quenched setting.
More precisely, we prove \eqref{hello} for the quenched measure in the quenched uniqueness regime for all
random graphs that are locally tree-like. A prominent example is the $\GRGw$ as studied here.}

\change{At criticality, i.e. for $(\beta,B)=(\beta_c,0)$, the limit in \eqref{hello} is no longer true.} A scaling different from $\sqrt{N}$ has to be used to obtain the scaling limit, and also this limit is not a normal random variable. In~\cite{Ellis-New01,Ellis-New02}, Ellis and Newman prove that for the standard Curie-Weiss model
\be
\frac{S_{\sss N}}{N^{3/4}} \stackrel{\mathcal{D}}{\longrightarrow} X, 
%\quad {\rm w.r.t.\ }P_{\sss N}, 
\qquad {\rm as\ }N\rightarrow\infty,
\ee
where $X$ is a random variable with density proportional to $\exp\{-\frac{1}{12}x^4\}$. We extend this result to the
\col{rank-1 inhomogeneous} Curie-Weiss model, \col{and thus to the annealed Ising model}. 
We 
%show 
\col{prove} that the scaling with $N^{3/4}$ is also correct when $\expec[W^4]<\infty$, but different for $\tau\in(3,5)$. 
Furthermore we show that when $\expec[W^4]=\infty$, different asymptotic distributions arise in the scaling limit.
We characterize them for the weight deterministic sequence \eqref{eq-precisepowerlaw} in which the weights
follows a precise power-law. Such a sequence is rather generic in the sense that it produces
an asymptotic weight that is also power-law distributed. The analysis
shows that the fluctuations of the total spin decrease as the exponent $\tau$ becomes smaller 
and the distribution seen in the scaling limit has tails proportional to $\e^{-C x^{\tau-1}}$.

\paragraph{Heuristic for the scaling limit.}
To obtain a guess for the correct scaling,  we can use the standard scaling relation between $\boldsymbol{\delta}$ and $\boldsymbol{\eta}$ as in~\cite{El}. On a box in the $d$-dimensional lattice with side lengths $n$, $[n]^d \subset \mathbb{Z}^d$, the exponent $\boldsymbol{\eta}$ satisfies
\be
P^{(d)}_n(S_n^2) \sim n^{d+2-\boldsymbol{\eta}},
\ee
where $P^{(d)}_n$ is the expectation w.r.t.\ the Ising measure on this box and $S_n$ is the sum of all spins inside the box, where it should be noted that there are $n^d$ sites in the box. Hence, to compare this with our setting, we take $N=n^d$ and, with an abuse of notation, let $S_n=S_{\sss N}$. If there is an exponent $\lambda$ such that $S_{\sss N}/N^\lambda$ converges in distribution to a non-trivial limit, then it must also hold that
$
P_{\sss N}\big((S_n/N^\lambda)^2\big)  = P_{\sss N}\big(S_n^2/n^{2d\lambda})  
$
converges. Hence $S_n^2 \sim n^{2d\lambda}$, so that $d+2-\boldsymbol{\eta} = 2d\lambda$. The standard scaling relation $2-\boldsymbol{\eta} = d\frac{\boldsymbol{\delta}-1}{\boldsymbol{\delta}+1}$ 
\cite{El} now suggests that we should choose
\be
\lambda = \frac{\boldsymbol{\delta}}{\boldsymbol{\delta}+1}.
\ee
We prove that this is indeed the correct scaling and we also show that the tail of the density behaves like $\exp\{-Cx^{\boldsymbol{\delta}+1}\}$ as is conjectured on $\mathbb{Z}^d$ (see \cite[{Section~V.8}]{El}). 

\paragraph{Near-critical scaling window.}
\co{Theorem \ref{thm-nonclassicalclt} is proved along the critical sequence $\beta_{c,N}$ 
approaching the critical inverse temperature $\beta_c$ in the limit $N\to\infty$. A different 
scaling limit  might be obtained by working with a sequence near the critical one, 
the so-called {\em near-critical window}, i.e.,  $\beta_{c,N} + \Delta_{\sss N}$
with $\Delta_{\sss N}\to 0$ at an appropriate rate.  As is argued in Section \ref{sec-scaling-window},
it turns out that for the annealed Ising model the width $\Delta_{\sss N}$ 
of the scaling window  is $N^{-(\boldsymbol{\delta}-1)/(\boldsymbol{\delta}+1)}$ and the scaling limit
differs by a quadratic term that appears in in the function $f(x)$ describing the density of $S_{\sss N}/N^{\boldsymbol{\delta}/(\boldsymbol{\delta}+1)}$ in \eqref{eq-limitingdensity}.}

\paragraph{At criticality.}
\co{As a consequence of the previous discussion, we also infer that if one works at
critical inverse temperature $\beta_c$, the scaling limit that will be seen to depend on
the speed at which $\nu_{\sss N}$  approaches $\nu$. Indeed, from \eqref{betac_ann} and 
\eqref{ctn}, one has $\beta_c - \beta_{c,N} = O (\nu - \nu_{\sss N})$. For a natural example given
by the deterministic sequence in Condition \ref{cond-det} (ii) one has that 
when $\tau > 5$ then $\nu - \nu_{\sss N} = o (1/N^{1/2})$  and thus the limiting distribution does not change;
on the contrary when $\tau\in (3,5]$ then  $\nu - \nu_{\sss N}= \zeta N^{-(\boldsymbol{\delta}-1)/(\boldsymbol{\delta}+1)}(1+o(1))$ for some $\zeta\neq 0$,
and thus the distribution changes since we are shifted in the near-critical window. 
See again Section \ref{sec-scaling-window} for more details.
}

\paragraph{Organisation of this paper.}
In Section \ref{sec-pf-thm-CritExp}, we start by deriving the annealed critical exponents in Theorem \ref{thm-CritExp}. In Section \ref{sec-NC-CLT}, we prove our non-classical limit theorems at criticality in Theorem \ref{thm-nonclassicalclt}. We will prove our results only for the annealed $\GRGw$, since the proofs for the rank-1 inhomogeneous Curie-Weiss models are either identical, or simpler. 

%%%%%%%%%%%%%%%%%%%%%%%%%%%%%%%%%%%%%%%%%%%%%%%%%%%%%%%%%%%%%%%%%%%%%%%%%%%%%%%%%%%%%%%%%%%%%%
%%%%%%%%%%%%%%%%%%%%%%%%%%%%%%%%%%%%%%%%%%%%%%%%%%%%%%%%%%%%%%%%%%%%%%%%%%%%%%%%%%%%%%%%%%%%%%

\section{Annealed critical exponents: proof of Theorem \ref{thm-CritExp}}
\label{sec-pf-thm-CritExp}
%To prove the theorem and corollary we 
We follow a strategy similar to that in \cite{DGH2}, although the proof in our case is a bit easier since the annealed magnetization is expressed in terms of the deterministic fixed point $z^*$ in~\eqref{fixpGRG0}, whereas in the quenched setting the magnetization is expressed in terms of a fixed point of a distributional recursion.
The proof of Theorem \ref{thm-CritExp} is split into Theorems \ref{theo-beta-delta} dealing with the exponents $\boldsymbol{\beta}$ and  $\boldsymbol{\delta}$ \change{(Section \ref{subsub1})}, Theorem \ref{theorem_gamma} for the exponent $\boldsymbol{\gamma}$ and  Theorem \ref{theorem-gammaprime} for the exponent ${\boldsymbol{\gamma}'}$ \change{(Section \ref{subsub2})}.
Some lemmas and propositions containing preliminary results are \change{also stated and } proved in \change{Section \ref{subsub1}}.

\change{Our analysis of the critical behavior crucially builds on the fixed point equation \eqref{fixpGRG0}.
We apply truncation arguments together with monotonicity (see the proof of Proposition \ref{prop-upperbz} for a prototypical example).
We rely on Taylor expansion properties for the fixed point $z^*$ in \eqref{fixpGRG0} as is customary for the Ising model.
By truncation we mean that we decompose the range on integration of various expectations with respect to the limiting
distribution $W$ according to the size of the fixed point $z^*$ and using asymptotics for truncated moments of $W$.}

%%%%%%%%%%%%%%%%%%%%%%%%%%%%%%%%%%%%%%%%%%%%%%%%%%%%%%%%%%%%%%%%%%%%%%%%%%%%%%%%%%%%%%%%%%%%%%
%%%%%%%%%%%%%%%%%%%%%%%%%%%%%%%%%%%%%%%%%%%%%%%%%%%%%%%%%%%%%%%%%%%%%%%%%%%%%%%%%%%%%%%%%%%%%%

\subsection{Magnetization: critical exponents $\boldsymbol{\beta}$ and $\boldsymbol{\delta}$}
\label{subsub1}

We start by showing that the phase transition is continuous.

\begin{lemma}[Continuous phase transition]\label{lem-contphasetrans}
Let $((\beta_\ell,B_\ell))_{\ell\geq1}$ be a sequence with $\beta_\ell$ and $B_\ell$ non-increasing, $\beta_\ell\geq\betacan$ and $B_\ell>0$, and $\beta_\ell\searrow\betacan$ and $B_\ell\searrow 0$ as $\ell\rightarrow\infty$. Then, the solution of \eqref{fixpGRG0} satisfies
\be\label{limz}
\lim_{\ell\rightarrow\infty} z^*(\beta_\ell,B_\ell)=0.
\ee
In particular,
	\be
	\lim_{B\searrow 0} z^*(\betacan,B) = 0, \qquad {\rm and }
	\qquad \lim_{\beta\searrow \betacan} z^*(\beta,0^+)=0.
	\ee
\end{lemma}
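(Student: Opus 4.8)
The plan is to exploit two features of the fixed point map in \eqref{fixpGRG0}: a uniform bound that makes the sequence $z^*(\beta_\ell,B_\ell)$ precompact, and the strict concavity of the map together with the exact value of its slope at the origin \emph{at criticality}, which pins every subsequential limit to $0$. Write the map as $F_{\beta,B}(z)=\mathbb{E}[\tanh(a(\beta)Wz+B)\,a(\beta)W]$, where $a(\beta):=\sqrt{\sinh(\beta)/\mathbb{E}[W]}$, so that $z^*(\beta,B)$ is its unique positive fixed point. First I would record that, since $\beta_\ell\le\beta_1$ and $\sinh$ is increasing, the integrand is dominated by $a(\beta_1)W$, which is integrable by Condition~\ref{cond-WR-GRG}; hence $0\le z^*(\beta_\ell,B_\ell)\le a(\beta_1)\mathbb{E}[W]$ uniformly in $\ell$, and the sequence is bounded.

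Second, I would extract a convergent subsequence $z^*(\beta_{\ell_k},B_{\ell_k})\to z_\infty\ge 0$ and pass to the limit in \eqref{fixpGRG0}. Since $\beta_\ell\searrow\betacan$ gives $a(\beta_\ell)\to a_c:=a(\betacan)$ while $B_\ell\searrow 0$, dominated convergence with dominating function $a(\beta_1)W$ yields $z_\infty=F_{\betacan,0}(z_\infty)=\mathbb{E}[\tanh(a_cWz_\infty)\,a_cW]$. Thus every subsequential limit is a nonnegative fixed point of the critical zero-field map $g(z):=F_{\betacan,0}(z)$.

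The heart of the argument is to show this forces $z_\infty=0$. I would observe that $g(0)=0$ and that $g$ is strictly concave on $[0,\infty)$: for each $W>0$ the map $z\mapsto\tanh(a_cWz)$ is strictly concave, averaging against the positive weight $a_cW$ preserves this, and the resulting inequality is strict because $\prob(W>0)>0$ (as $\mathbb{E}[W]>0$). Its right derivative at the origin is $g'(0^+)=a_c^2\,\mathbb{E}[W^2]$, computed by dominated convergence from $\tanh(a_cWz)/z\to a_cW$ with envelope $a_c^2W^2$; and by \eqref{betac_ann}--\eqref{defnu} one has $\sinh(\betacan)=1/\nu=\mathbb{E}[W]/\mathbb{E}[W^2]$, so $a_c^2=1/\mathbb{E}[W^2]$ and therefore $g'(0^+)=1$. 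For a strictly concave $g$ with $g(0)=0$, the ratio $g(z)/z$ is strictly decreasing, whence $g(z)/z<g'(0^+)=1$ for every $z>0$; thus $g(z)<z$ and $g$ has no positive fixed point. Consequently $z_\infty=0$, and since every subsequential limit equals $0$ the full sequence converges to $0$, which is \eqref{limz}. I expect this concavity/slope step to be the main obstacle: the whole argument rests on the slope at the origin being \emph{exactly} $1$ at $\betacan$, which is precisely where the finiteness of $\mathbb{E}[W^2]$ and the explicit value \eqref{betac_ann} are used, combined with strict concavity to exclude a positive limiting fixed point.

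Finally, the two displayed special cases follow by specializing the sequence. Taking $\beta_\ell\equiv\betacan$ and any $B_\ell\searrow 0$ gives $\lim_{B\searrow 0}z^*(\betacan,B)=0$ directly from \eqref{limz}. For the spontaneous case I would use that $F_{\beta,B}$ is nondecreasing in $B$ (as $\tanh$ is increasing), so the unique positive fixed point $z^*(\beta,B)$ is nondecreasing in $B$ and $z^*(\beta,0^+)=\inf_{B>0}z^*(\beta,B)\le z^*(\beta,B)$; choosing $\beta_\ell\searrow\betacan$ and $B_\ell\searrow 0$ both non-increasing and sandwiching $0\le z^*(\beta_\ell,0^+)\le z^*(\beta_\ell,B_\ell)\to 0$ yields $\lim_{\beta\searrow\betacan}z^*(\beta,0^+)=0$.
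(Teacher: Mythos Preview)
Your proof is correct and follows essentially the same route as the paper. The core step---showing that any limit $z_\infty>0$ of the sequence would satisfy $z_\infty=g(z_\infty)$ with $g(z)<z$ for all $z>0$ at criticality---is identical; the paper writes this as $\mathbb{E}[\tanh(a_cWc)\,a_cW]<c\,a_c^2\,\mathbb{E}[W^2]=c$ via $\tanh(x)<x$ and $\sinh(\betacan)\nu=1$, which is your concavity-plus-slope-one argument in compressed form. The only minor difference is that the paper gets existence of the full limit directly from monotonicity of $z^*(\beta,B)$ in both variables (so $z^*(\beta_\ell,B_\ell)$ is non-increasing and bounded below), while you take the slightly longer but equally valid path through boundedness and subsequential limits.
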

\begin{proof}
The existence of the limit \eqref{limz} is a consequence of the monotonicity of $z^{*}(\beta, B)$
%, see Remark \ref{remark1}, 
and the fact that $z^{*}(\beta,B)\ge 0$ for $B\ge 0$.  Suppose that $\lim_{\ell\rightarrow\infty}z^*(\beta_\ell,B_\ell)=c>0$.  Then, it follows from~\eqref{fixpGRG0} and dominated convergence that
%\begin{align}
%c&=\lim_{\ell\rightarrow\infty} z^*(\beta_\ell,B_\ell) = \lim_{\ell\rightarrow\infty} \mathbb{E}\left[\tanh \left(\sqrt{\frac{\sinh\left(\beta_\ell\right)}{\mathbb{E}\left[W\right]}}W z^*(\beta_\ell,B_\ell) + B_\ell\right) \sqrt{\frac{\sinh\left(\beta_\ell\right)}{\mathbb{E}\left[W\right]}}\,W\right] \nn\\
%&=  \mathbb{E}\left[\tanh \left(\sqrt{\frac{\sinh\left(\betacan\right)}{\mathbb{E}\left[W\right]}}W c\right) \sqrt{\frac{\sinh\left(\betacan\right)}{\mathbb{E}\left[W\right]}}\,W\right] \nn\\
%& < c \sinh(\betacan)\nu =c,
%\end{align}
	\begin{align}
	c&=\lim_{\ell\rightarrow\infty} z^*(\beta_\ell,B_\ell) 
	=  \mathbb{E}\left[\tanh \left(\sqrt{\frac{\sinh\left(\betacan\right)}
	{\mathbb{E}\left[W\right]}}W c\right) \sqrt{\frac{\sinh\left(\betacan\right)}
	{\mathbb{E}\left[W\right]}}\,W\right]  < c \sinh(\betacan)\nu =c,
	\end{align}
where we used that $\tanh(x) < x$ for $x>0$ and $\betacan=\asinh(1/\nu)$. This contradiction proves the lemma.
\end{proof}

\noindent
We next show that $z^*$ has the same scaling as we want to prove for $\wdt{M}(\beta,B)$ by proving the upper and lower bounds in Propositions~\ref{prop-upperbz} and~\ref{prop-lowerbz} below. These then allow us to obtain the theorem. But first we state some properties for truncated moments of $W$ in the following lemma:

\begin{lemma}[Truncated moments of $W$]\label{lem-truncatedmoments}
Assume that $W$ obeys a power law for some $\tau>1$,  see item (ii) in  Condition \ref{cond-WR-GRG-tail}. Then
there exist constants $c_{a,\tau},C_{a,\tau} >0$ such that,
as $\ell \rightarrow \infty,$
	\be\label{eq-trunc-moments-w}
	\begin{cases}
	c_{a,\tau}\ell^{a-(\tau-1)}  \\
	c_{\tau-1,\tau}\log \ell 
	\end{cases} \leq
	\expec\left[W^a \indic{W \leq \ell}\right] \leq
	\begin{cases}
	C_{a,\tau}\ell^{a-(\tau-1)} &\text{when } a > \tau-1,\\
	C_{\tau-1,\tau}\log \ell &\text{when }  a=\tau-1.
	\end{cases}
	\ee
and, when $a<\tau-1$,
	\be\label{eq-trunc-moments-w-2}
	\expec\left[W^a \indic{W > \ell}\right] \leq C_{a,\tau}\ell^{a-(\tau-1)}.
	\ee
\end{lemma}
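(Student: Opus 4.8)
The plan is to reduce everything to the power-law tail bound \eqref{eq-power-law} via the layer-cake representation of (truncated) moments. Writing $\bar F(w):=\pr(W>w)$ and using $W^a=a\int_0^W w^{a-1}\,\dint w$ together with Fubini's theorem, one obtains the two exact identities
\be
\expec\left[W^a\indic{W\le \ell}\right] = a\int_0^\ell w^{a-1}\big[\bar F(w) - \bar F(\ell)\big]\,\dint w,
\qquad
\expec\left[W^a\indic{W> \ell}\right] = \ell^a\bar F(\ell) + a\int_\ell^\infty w^{a-1}\bar F(w)\,\dint w,
\ee
valid for all $\ell>0$ and $a>0$ (the second one being finite precisely when $a<\tau-1$). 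These turn the statement into elementary estimates of $\int w^{a-1}\bar F(w)\,\dint w$, into which I substitute $c_{\sss W}w^{-(\tau-1)}\le \bar F(w)\le C_{\sss W}w^{-(\tau-1)}$ for $w>w_0$.

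For the upper bounds in \eqref{eq-trunc-moments-w} I would insert $\bar F(w)\le C_{\sss W}w^{-(\tau-1)}$ and drop the nonnegative correction $\bar F(\ell)$ in the first identity. Splitting $\int_0^\ell=\int_0^{w_0}+\int_{w_0}^\ell$, the contribution of $[0,w_0]$ is a bounded constant (using $\bar F\le 1$ and $a>0$ for integrability at the origin), while $\int_{w_0}^\ell w^{a-\tau}\,\dint w$ equals $\Theta(\ell^{a-(\tau-1)})$ when $a>\tau-1$ and $\Theta(\log\ell)$ when $a=\tau-1$; absorbing the constant into the leading term for $\ell$ large gives both upper bounds. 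The case $a<\tau-1$ in \eqref{eq-trunc-moments-w-2} is the easiest and uses only the upper tail bound: in the second identity both $\ell^a\bar F(\ell)\le C_{\sss W}\ell^{a-(\tau-1)}$ and $a\int_\ell^\infty w^{a-\tau}\,\dint w=\frac{a}{\tau-1-a}\ell^{a-(\tau-1)}$ are of the claimed order.

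The main obstacle is the lower bounds in \eqref{eq-trunc-moments-w}, since the naive estimate of the first identity pits the lower tail constant $c_{\sss W}$ in the integral against the upper tail constant $C_{\sss W}$ in the boundary correction $\bar F(\ell)$, and both terms are of the same order $\ell^{a-(\tau-1)}$; with $C_{\sss W}\gg c_{\sss W}$ this naive subtraction could yield a negative coefficient. I would circumvent this by exploiting that $\bar F(w)-\bar F(\ell)$ is already large on most of $[w_0,\ell]$. Setting $\kappa:=(c_{\sss W}/2C_{\sss W})^{1/(\tau-1)}\in(0,1)$, for every $w\le \kappa\ell$ one has $\bar F(w)-\bar F(\ell)\ge c_{\sss W}w^{-(\tau-1)}-C_{\sss W}\ell^{-(\tau-1)}\ge \tfrac12 c_{\sss W}w^{-(\tau-1)}$. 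Restricting the (nonnegative) integrand in the first identity to $w\in(w_0,\kappa\ell]$ then gives, for $\ell$ large enough that $\kappa\ell>w_0$,
\be
\expec\left[W^a\indic{W\le\ell}\right]\ge \frac{a\,c_{\sss W}}{2}\int_{w_0}^{\kappa\ell} w^{a-\tau}\,\dint w,
\ee
which is bounded below by a positive multiple of $\ell^{a-(\tau-1)}$ when $a>\tau-1$, and of $\log\ell$ when $a=\tau-1$, after the lower-order endpoint term is absorbed for $\ell$ large. This matches the upper bounds and completes the proof. The only bookkeeping to watch is that all estimates are asymptotic as $\ell\to\infty$, so throughout I would fix $\ell$ large enough that the $\ell$-power (or $\log\ell$) dominates the constant corrections, which is exactly the regime in which the lemma is stated.
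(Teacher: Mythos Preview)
Your proof is correct. The paper does not give its own argument for this lemma; it simply refers the reader to \cite[Lemma~3.4]{DGH2}, whose proof proceeds by the same layer-cake/Fubini reduction to the tail function that you use. Your treatment of the lower bounds --- restricting the integration to $(w_0,\kappa\ell]$ with $\kappa=(c_{\sss W}/2C_{\sss W})^{1/(\tau-1)}$ so that the subtracted boundary term $\bar F(\ell)$ is dominated --- is exactly the right way to avoid the sign problem you identified, and the resulting constants depend only on $a,\tau,c_{\sss W},C_{\sss W},w_0$ as required.
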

\begin{proof} 
The proof is similar to that of \cite[Lemma 3.4]{DGH2}.
\end{proof}

\noindent
In the following we write $c_{i}$, $C_{i}$, $i\ge 1$ for constants that only depend on $\beta$ and on moments of  $W$ and satisfy 
	\be
	0<\liminf_{\beta \searrow \beta_{c}} c_{i}(\beta) \le \limsup_{\beta \searrow \beta_{c}}  c_{i}(\beta)<	\infty , 
	\ee
and the same holds for $C_{i}$. The constants $C_{i}$ appear in upper bounds and $c_{i}$ in lower bounds.  Furthermore, we write
$e_{i}$, $i\ge 1$ for error functions that depend on $\beta$, $B$ and on moments of $W$, and satisfy
	\be\label{constants}
	\limsup_{B\searrow 0} e_{i}(\beta, B) < \infty\quad 
	\mbox{and}\quad 
	\lim_{B\searrow 0} e_{i}(\beta_{c}, B)=0.
	\ee 
Here, the subscript $i$  is just a label for constants and error functions.

Further, we introduce the following notation that will be used extensively in the following:
	\be
	\alpha(\beta):=\sqrt{\frac{\sinh(\beta)}{\expec[W]}}.
	\ee

\begin{proposition}[Upper bound on $z^*$]\label{prop-upperbz}
Let $\beta\geq\betacan$ and $B>0$. Then, there exists a $C_1>0$ such that
	\be\label{eq-upbdzE4}
	z^* \leq \sqrt{\expec[W]\sinh(\beta)}B+\sinh(\beta)\nu z^*-C_1 z^{*\boldsymbol{\delta}},
	\ee
where $\boldsymbol{\delta}$ takes the values as stated in Theorem~\ref{thm-CritExp}.
For $\tau=5$,
	\be\label{eq-UpperExi-tau5}
	z^* \leq \sqrt{\expec[W]\sinh(\beta)}B+\sinh(\beta)\nu z^*-C_1 z^{*3}\log\left(1/z^*\right).
	\ee
\end{proposition}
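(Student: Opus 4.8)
The plan is to read off from the fixed point equation \eqref{fixpGRG0} that, in the notation $\alpha=\alpha(\beta)=\sqrt{\sinh(\beta)/\expec[W]}$,
\be
z^*=\alpha\,\expec\big[W\tanh(\alpha W z^*+B)\big].
\ee
Since $z^*\ge 0$ and $B>0$, the argument $\alpha W z^*$ is nonnegative, so I would first peel off the external field using concavity of $\tanh$ on $[0,\infty)$: the tangent-line bound at the point $\alpha W z^*$ gives $\tanh(\alpha W z^*+B)\le \tanh(\alpha W z^*)+B\,\tanh'(\alpha W z^*)\le \tanh(\alpha W z^*)+B$, because $\tanh'\le 1$. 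Multiplying by $\alpha W$, taking expectations, and using $\alpha\,\expec[W]=\sqrt{\expec[W]\sinh(\beta)}$ yields
\be
z^*\le \sqrt{\expec[W]\sinh(\beta)}\,B+\alpha\,\expec\big[W\tanh(\alpha Wz^*)\big].
\ee

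Next I would extract the linear term by writing $g(x):=x-\tanh(x)\ge 0$ and using the exact identity $\alpha^2\expec[W^2]=\sinh(\beta)\nu$, which gives
\be
\alpha\,\expec\big[W\tanh(\alpha Wz^*)\big]=\sinh(\beta)\nu\,z^*-\alpha\,\expec\big[W g(\alpha Wz^*)\big].
\ee
Substituting this reduces the entire proposition to a single lower bound on the defect functional, namely $\alpha\,\expec[W g(\alpha Wz^*)]\ge C_1 z^{*\boldsymbol{\delta}}$ (respectively $C_1z^{*3}\log(1/z^*)$ when $\tau=5$). Everything after this point is a lower estimate of $\expec[W g(\alpha Wz^*)]$.

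For that estimate I would combine an elementary inequality with a truncation. Since $g(x)/x^3\to 1/3$ as $x\to 0$ and $g$ is positive on $(0,\infty)$, there is $c>0$ with $g(x)\ge c\,x^3$ for all $x\in[0,1]$. Discarding the (nonnegative) contribution of $\{\alpha Wz^*>1\}$ and restricting to $\{W\le 1/(\alpha z^*)\}$ gives
\be
\expec\big[W g(\alpha Wz^*)\big]\ \ge\ c\,(\alpha z^*)^3\,\expec\big[W^4\indic{W\le 1/(\alpha z^*)}\big].
\ee
By Lemma~\ref{lem-contphasetrans} we have $z^*\to 0$ in the relevant limit, so the truncation threshold $1/(\alpha z^*)$ diverges. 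When $\expec[W^4]<\infty$ the truncated fourth moment converges to $\expec[W^4]>0$, so for the threshold large enough it exceeds $\tfrac12\expec[W^4]$ and the defect is bounded below by a constant times $(\alpha z^*)^3$, giving $\boldsymbol{\delta}=3$. When $W$ obeys a power law with $\tau\in(3,5)$, Lemma~\ref{lem-truncatedmoments} applied with $a=4>\tau-1$ gives $\expec[W^4\indic{W\le 1/(\alpha z^*)}]\asymp (\alpha z^*)^{-(5-\tau)}$, so the defect is of order $(\alpha z^*)^{3-(5-\tau)}=(\alpha z^*)^{\tau-2}$, producing $\boldsymbol{\delta}=\tau-2$; in the boundary case $\tau=5$ (where $a=\tau-1$) the same lemma yields the logarithmic factor $\log(1/(\alpha z^*))=\log(1/z^*)(1+o(1))$, which gives \eqref{eq-UpperExi-tau5}. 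In each regime $\alpha=\alpha(\beta)$ is bounded away from $0$ and $\infty$ near $\betacan$, so the prefactors $\alpha^4$ and $\alpha^{\tau-1}$ are absorbed into $C_1$ with the behaviour required in \eqref{constants}.

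The main obstacle is the truncation step: one must match the cubic behaviour of $g$ near the origin against the power-law tail of $W$ by choosing the cut at $\{W\le 1/(\alpha z^*)\}$, and then feed the correct truncated-moment asymptotics from Lemma~\ref{lem-truncatedmoments} into the estimate. Getting the exponent bookkeeping $3-(5-\tau)=\tau-2$ exactly right, together with the passage from $\log(1/(\alpha z^*))$ to $\log(1/z^*)$ in the $\tau=5$ case, is the delicate part; the concavity bound and the algebraic identity above are routine. Note that here only a \emph{lower} bound on the defect is needed, so the saturated tail $\{\alpha Wz^*>1\}$ may simply be discarded, which is what makes this direction cleaner than the companion lower bound on $z^*$.
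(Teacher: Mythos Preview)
Your argument is correct and follows essentially the same strategy as the paper: truncate at a level of order $1/z^*$, use the cubic behaviour of the defect near the origin, and then invoke Lemma~\ref{lem-truncatedmoments} for the truncated fourth moment. The one noteworthy difference is in how the external field $B$ is handled. The paper Taylor-expands $\tanh(x+B)$ about $x=0$ and carries $B$ through the remainder, obtaining $\tanh(x+B)\le B+x-\tfrac16 x^3$ on the event $\{x+B\le \atanh\tfrac18\}$; this makes the truncation threshold $B$-dependent. You instead peel off $B$ at the outset via the concavity bound $\tanh(x+B)\le \tanh(x)+B$, and only afterwards introduce the defect $g(x)=x-\tanh(x)$ and truncate at $\{x\le 1\}$. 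Your route is slightly cleaner, since the $B$-dependence disappears from the truncation and the subsequent moment estimate, and it makes transparent that only a lower bound on $\expec[Wg(\alpha Wz^*)]$ is needed. Both approaches yield the same exponent bookkeeping $3-(5-\tau)=\tau-2$ and the logarithmic correction at $\tau=5$.
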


\begin{proof}
We frequently use that $\tanh(B)\leq B$. A Taylor expansion around $x=0$ gives that, for some $\zeta\in(0,x)$,
%\begin{align}\label{eq-uptanh2nd}
%\tanh(x+B)&= \tanh(B) +(1-\tanh^2(B))x-\tanh(\zeta+B)(1-\tanh^2(\zeta+B))x^2\nn\\
%& \leq B + x ,
%\end{align}
%where we used that $\tanh(B)\leq B$. Similarly,
	\begin{align}\label{eq-uptanh3rd}
	\tanh(x+B)&= \tanh(B) +(1-\tanh^2(B))x-\tanh(B)(1-\tanh^2(B))x^2\nn\\
	&\qquad -\frac13 (1-\tanh^2(\zeta+B))x^3 + \tanh(\zeta+B)(1-\tanh^2(\zeta+B))x^3 \nn\\
	& \leq B + x - \frac13 x^3 + \frac43 \tanh(x+B)x^3,
	\end{align}
where we also used that $\tanh(x)\leq 1$. If we now assume that $x+B \leq \atanh\frac18$, then
	\be\label{eq-uptanh3rd2}
	\tanh(x+B) \leq B + x -\frac16 x^3.
	\ee
We apply this result to~\eqref{fixpGRG0} where $x=\ab W z^*$, which we force to be at most $\atanh\frac18$ by introducing an indicator function as follows:
	\begin{align}
	z^* &\leq \expec\left[\left(B+\ab W z^*\right) \ab \,W\right]\nn\\
	&\qquad +\mathbb{E}\left[\left\{\tanh \left(\ab W z^* + B\right)-\left(B+\ab  W z^*\right)\right\} 
	\ab \,W \indic{\ab W z^*+B\leq \atanh\frac18}\right],
	\end{align}
since $\tanh(B+x)\leq B+x$. Hence, using~\eqref{eq-uptanh3rd2},
	\be\label{upper_b_z}
	z^* \leq \sqrt{\expec[W]\sinh(\beta)}B+\sinh(\beta)\nu z^*
	-\frac16 \ab^{4} \expec\left[W^4 \indic{\ab W z^* +B \leq \atanh\frac18}\right] z^{*3}.
	\ee
For $\expec[W^4]<\infty$, this is indeed of the form~\eqref{eq-upbdzE4} and we are done. If $\tau\in(3,5)$, then it follows from Lemma~\ref{lem-truncatedmoments} that
	\be
	\expec\left[W^4 \indic{\ab W z^*+B\leq \atanh\frac18}\right] 
	\geq c_{4,\tau} \left( \frac{\ab }{(\atanh\frac18-B)}z^*\right)^{\tau-5},
	\ee
which proves the proposition for $\tau\in(3,5)$. The proof for $\tau=5$ is similar and we omit it.
\end{proof}

We now  proceed with the lower bound:

\begin{proposition}[Lower bound on $z^*$]\label{prop-lowerbz}
Let $\beta\geq\betacan$ and $B>0$. Then, there exists a $c_1>0$ such that
	\be\label{lower_b_z}
	z^* \geq \sqrt{\expec[W]\sinh(\beta)}B+\sinh(\beta)\nu\,z^*-c_1 z^{*\boldsymbol{\delta}}-Be_1,
	\ee
where $\boldsymbol{\delta}$ takes the values as stated in Theorem~\ref{thm-CritExp}.
For $\tau=5$,
	\be\label{eq-LowerExi-tau5}
	z^* \geq \sqrt{\expec[W]\sinh(\beta)}B+\sinh(\beta)\nu z^*-c_1 z^{*3}\log\left(1/z^*\right)-Be_1.
	\ee
\end{proposition}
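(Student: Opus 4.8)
**Proof plan for the lower bound on $z^*$ (Proposition \ref{prop-lowerbz}).**

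The plan is to mirror the upper-bound argument in Proposition \ref{prop-upperbz}, but now controlling the Taylor remainder from below rather than above and carefully handling the contribution of the region where $\alpha(\beta)Wz^*$ is large. First I would write the exact third-order Taylor expansion of $\tanh(x+B)$ around $x=0$ with remainder, exactly as in \eqref{eq-uptanh3rd}, obtaining $\tanh(x+B)=B+(1-\tanh^2(B))x+\dots$ and then bounding the error terms from below. The key inequality I need is a reverse of \eqref{eq-uptanh3rd2}: on the event where $\alpha(\beta)Wz^*+B$ is bounded by a suitable constant, one has $\tanh(x+B)\ge B+x-c\,x^3$ for some fixed $c>0$, together with the cruder bound $\tanh(x+B)\ge \tanh(B)+(1-\tanh^2(B))x - C x^2$ that isolates the linear-in-$B$ piece. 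Plugging $x=\alpha(\beta)Wz^*$ into \eqref{fixpGRG0} and using $\tanh(B)=B-Be_1$ for an error term $e_1$ satisfying \eqref{constants} produces the terms $\sqrt{\expec[W]\sinh(\beta)}\,B$, $\sinh(\beta)\nu\,z^*$, and a negative cubic correction.

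Next I would treat the two truncation regimes separately. Splitting the expectation according to whether $\alpha(\beta)Wz^*+B\le \atanh\frac18$ or not, the main contribution comes from the truncated region, where the $-c\,x^3$ bound gives a term $-c\,\alpha(\beta)^4\,\expec[W^4\indic{\alpha(\beta)Wz^*+B\le \atanh\frac18}]\,z^{*3}$. When $\expec[W^4]<\infty$ this truncated fourth moment is bounded above by $\expec[W^4]$, yielding the clean cubic term $-c_1 z^{*3}$ as required for $\boldsymbol{\delta}=3$. When $\tau\in(3,5)$, I would invoke Lemma~\ref{lem-truncatedmoments} (equation \eqref{eq-trunc-moments-w}) to obtain $\expec[W^4\indic{W\le \ell}]\le C_{4,\tau}\ell^{4-(\tau-1)}=C_{4,\tau}\ell^{5-\tau}$ with $\ell\asymp (\atanh\frac18-B)/(\alpha(\beta)z^*)$, so that the cubic term becomes of order $z^{*3}\cdot z^{*(\tau-5)}=z^{*(\tau-2)}$, matching $\boldsymbol{\delta}=\tau-2$. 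For $\tau=5$ the logarithmic asymptotics of \eqref{eq-trunc-moments-w} give the factor $\log(1/z^*)$ in \eqref{eq-LowerExi-tau5}.

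The main obstacle I anticipate is controlling the \emph{discarded} region $\{\alpha(\beta)Wz^*+B>\atanh\frac18\}$ from below, since there $W$ is large and the tanh contribution, though positive, must be shown not to destroy the lower bound. The strategy is to note that on this event $\tanh(\alpha(\beta)Wz^*+B)\ge 0$ always, and more precisely it is bounded below by a positive constant, so its contribution to \eqref{fixpGRG0} is nonnegative; one then checks that replacing it by the truncated quadratic/cubic approximation only \emph{decreases} the right-hand side by a controlled amount absorbed into the constants $c_1$ and the error $Be_1$. Care is needed because the linear coefficient $(1-\tanh^2(B))$ differs from $1$ by an $O(B^2)$ term, which is precisely what generates the $-Be_1$ correction, with $e_1$ satisfying \eqref{constants} so that it vanishes at $\beta_c$ as $B\searrow0$. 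Once the cubic term is identified with the correct exponent in each regime, the proposition follows, and the $\tau=5$ case is handled by the same argument using the logarithmic bound, so I would remark that it is analogous and omit the repetitive details.
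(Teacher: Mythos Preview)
Your proposal is correct and follows essentially the same strategy as the paper's proof. Two minor differences are worth noting. First, for the case $\expec[W^4]<\infty$ the paper does not truncate at all: the Taylor lower bound $\tanh(x+B)\ge B+x-\tfrac13 x^3-B(B+Bx+x^2)$ (obtained from $B-B^2\le\tanh(B)\le B$) holds for every $x\ge0$, so one may take the expectation directly and obtain $c_1=\tfrac13\sinh^2(\beta)\expec[W^4]/\expec[W]^2$; your truncated version works too but adds an unnecessary split. Second, for $\tau\in(3,5)$ the paper splits at the simpler threshold $\{W\le 1/z^*\}$ rather than $\{\alpha(\beta)Wz^*+B\le\atanh\tfrac18\}$; these are equivalent up to constants, and the large-$W$ piece is handled exactly as you describe, using $\tanh\ge0$ together with the tail bound $\expec[W^2\indic{W>1/z^*}]\le C_{2,\tau}z^{*\tau-3}$ from Lemma~\ref{lem-truncatedmoments} to show this contribution is $O(z^{*\tau-2})$ and hence absorbable into $c_1$.
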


\begin{proof}
As in~\eqref{eq-uptanh3rd} we can bound
	\be\label{eq-lowtanh3rd}
	\tanh(x+B) \geq B+x-\frac13 x^3 -B(B+B x+x^2),
	\ee
where we have used that $B-B^2\leq \tanh(B) \leq B$.
For $\expec[W^4]<\infty$, we can immediately use this to obtain
	\be %\label{lower_b_z}
	z^* \geq \sqrt{\expec[W]\sinh(\beta)}B+\sinh(\beta)\nu z^* -c_1z^{*3}-B e_1,
	\ee
where
	\be\label{popi}
	c_1 = \frac13\sinh^2(\beta)\frac{\expec[W^4]}{\expec[W]^2},
	\ee
and
	\be
	e_1 = B \sqrt{\mathbb{E}\left[W\right]\sinh\left(\beta\right)} 
	+ B \sinh(\beta)\nu z^* 
	+ \left(\frac{\sinh\left(\beta\right)}{\mathbb{E}\left[W\right]}\right)^{3/2}\expec[W^3]z^{*2}.
	\ee
All terms in $e_1$ indeed converge to $0$ in the appropriate limit, because of Lemma~\ref{lem-contphasetrans}. %\todo{Make this notion of error term precise}

For $\tau\in(3,5)$, we rewrite $z^*$ as
	\begin{align}
	z^* &= \sqrt{\expec[W]\sinh(\beta)}B+\sinh(\beta)\nu z^* 
		\nn\\
	&\qquad+\mathbb{E}\left[\left\{\tanh \left(\ab W z^* + B\right)-\left(B+\ab  W z^*\right)\right\} 
	\ab \,W \left(\indic{W\leq 1/z^*}+\indic{W> 1/z^*} \right)\right].
	\end{align}
The case where $W\leq1/z^*$ can be treated as above. This gives
	\begin{align}\label{eq-lowzsmallW}
	\mathbb{E}\bigg[\bigg\{&\tanh \left(\ab W z^* + B\right)
	-\left(B+\ab  W z^*\right)\bigg\} \ab \,W \indic{W\leq 1/z^*}\bigg] \nn\\
	&\geq - \frac13\sinh^2(\beta)\frac{\expec[W^4 \indic{W\leq 1/z^*}]}{\expec[W]^2} z^{*3} - Be_2,
	\end{align}
where
	\be
	e_2 = B \ab \expec[W\indic{W\leq 1/z^*}] 
	+ B \sinh(\beta)\frac{\expec[W^2\indic{W\leq 1/z^*}]}{\expec[W]}z^* 
	+ \ab^{3}\,\expec[W^3\indic{W\leq 1/z^*}]z^{*2}.
	\ee
By Lemma~\ref{lem-truncatedmoments},
	\be
	\expec[W^4 \indic{W\leq 1/z^*}] \leq C_{4,\tau} z^{*\tau-5},
	\ee
so that indeed~\eqref{eq-lowzsmallW} is bounded from below by
	\be
	-\col{c_2} z^{*\tau-2} -Be_2.
	\ee
\col{with 
\be
c_2 = \frac13\sinh^2(\beta)\frac{C_{4,\tau}}{\expec[W]^2}.
\ee	
}
Using Lemma's~\ref{lem-contphasetrans} and~\ref{lem-truncatedmoments}, one can also show that all terms in $e_2$ indeed converge to $0$ in the appropriate limit.

It remains to bound the term where $W>1/z^*$. For this we use that $\tanh(x+B)\geq 0$:
	\begin{align}\label{eq-lowzlargeW}
	\mathbb{E}\bigg[\bigg\{&\tanh \left(\ab W z^* + B\right)
	-\left(B+\ab  W z^*\right)\bigg\} \ab \,W \indic{W> 1/z^*}\bigg] \nn\\
	&\geq - \sinh(\beta)\frac{\expec[W^2 \indic{W> 1/z^*}]}{\expec[W]} z^{*} - Be_3,
	\end{align}
where
	\be
	e_3 = \ab  \expec[W \indic{W> 1/z^*}].
	\ee
By Lemma~\ref{lem-truncatedmoments},
	\be
	\expec[W^2 \indic{W> 1/z^*}] \leq C_{2,\tau} z^{*\tau-3},
	\ee
again giving the right scaling. \col{As a consequence \eqref{eq-lowzlargeW} is bounded from below by $-{c_3} z^{*\tau-2} -Be_3$ with
\be
c_3 = \frac13\sinh(\beta)\frac{C_{2,\tau}}{\expec[W]^2}.
\ee
} Similarly,
	\be
	e_3 \leq \ab C_{1,\tau} z^{*\tau-2},
	\ee
which indeed converges to 0. We conclude that \eqref{lower_b_z} holds with $c_1=c_2+c_3$ \col{and $e_1=e_2+e_3$}.
\end{proof}
\medskip

The upper and lower bounds on $z^*$ in the previous two propositions allow us to prove that the critical exponents take the values stated in Theorem~\ref{thm-CritExp}:

\begin{theorem}[Values of  $\boldsymbol{\beta}$ and  $\boldsymbol{\delta}$]\label{theo-beta-delta}
The critical exponents  $\boldsymbol{\beta}$ and  $\boldsymbol{\delta}$ equal the values as stated in Theorem \ref{thm-CritExp} when $\expec[W^{2}]<\infty$ and $\tau \in (3,5)$. Furthermore, for $\tau=5$ \eqref{log-corr-M-tau5} holds.
\end{theorem}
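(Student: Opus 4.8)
The plan is to derive the scaling of the magnetization $\wdt{M}$ from that of the fixed point $z^*$, for which Propositions~\ref{prop-upperbz} and~\ref{prop-lowerbz} already supply matching upper and lower bounds. The engine of the argument is the identity $\sinh(\betacan)\nu=1$, which is precisely the content of \eqref{betac_ann}: it forces the linear-in-$z^*$ term $\sinh(\beta)\nu z^*$ on the right-hand sides of \eqref{eq-upbdzE4} and \eqref{lower_b_z} to cancel the $z^*$ on the left when $\beta=\betacan$, leaving $z^*$ to be fixed by the balance between the field term $\sqrt{\expec[W]\sinh(\beta)}\,B$ and the leading nonlinearity $z^{*\boldsymbol{\delta}}$.

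To obtain $\boldsymbol{\delta}$, I would set $\beta=\betacan$. Proposition~\ref{prop-upperbz} then reduces to $C_1 z^{*\boldsymbol{\delta}}\le \sqrt{\expec[W]\sinh(\betacan)}\,B$, so that $z^*\le C B^{1/\boldsymbol{\delta}}$, while Proposition~\ref{prop-lowerbz} reduces to $c_1 z^{*\boldsymbol{\delta}}\ge B\big(\sqrt{\expec[W]\sinh(\betacan)}-e_1\big)$; since $e_1(\betacan,B)\to 0$ as $B\searrow 0$ by \eqref{constants}, the right-hand side exceeds $\tfrac12\sqrt{\expec[W]\sinh(\betacan)}\,B$ for small $B$, giving $z^*\ge c B^{1/\boldsymbol{\delta}}$. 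Hence $z^*(\betacan,B)\asymp B^{1/\boldsymbol{\delta}}$. To obtain $\boldsymbol{\beta}$, I would instead fix $\beta>\betacan$, send $B\searrow 0$ in the same two inequalities (the $B$- and $e_1$-terms then vanish, and $z^*(\beta,0^+)>0$ by Theorem~\ref{term_lim_annealed} together with Lemma~\ref{lem-contphasetrans}), and divide by $z^*>0$. This shows that $z^{*(\boldsymbol{\delta}-1)}$ is bounded above and below by positive multiples of $\sinh(\beta)\nu-1$. A one-term Taylor expansion of $\sinh$ at $\betacan$, using $\cosh(\betacan)\nu>0$, gives $\sinh(\beta)\nu-1\asymp(\beta-\betacan)$, whence $z^*(\beta,0^+)\asymp(\beta-\betacan)^{1/(\boldsymbol{\delta}-1)}$. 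Since $\boldsymbol{\beta}=1/(\boldsymbol{\delta}-1)$ in both regimes (indeed $1/2=1/(3-1)$ and $1/(\tau-3)=1/((\tau-2)-1)$), this is the asserted exponent.

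It then remains to pass from $z^*$ to $\wdt{M}$. Using $\tanh(x)\le x$ in \eqref{eq-magnetization} yields $\wdt{M}\le\sqrt{\sinh(\beta)\expec[W]}\,z^*+B$, and restricting the expectation in \eqref{eq-magnetization} to a fixed bounded range $\{W\le K\}$, on which $\tanh(\alpha(\beta)Wz^*)$ is comparable to its argument once $z^*$ is small, yields $\wdt{M}\ge c\,z^*$. As $z^*\asymp B^{1/\boldsymbol{\delta}}\gg B$ at $\beta=\betacan$, and as the field contribution is absent when $B=0^+$, the term $B$ is negligible and $\wdt{M}\asymp z^*$, so $\wdt{M}$ inherits both exponents. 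The case $\tau=5$ runs identically: the extra factor $\log(1/z^*)$ in \eqref{eq-UpperExi-tau5}--\eqref{eq-LowerExi-tau5} turns the defining balances into $z^{*3}\log(1/z^*)\asymp B$ and $z^{*2}\log(1/z^*)\asymp \beta-\betacan$, and inverting these implicit relations produces the logarithmic corrections of \eqref{log-corr-M-tau5}.

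The step I expect to be most delicate is the transfer $\wdt{M}\asymp z^*$ in the heavy-tailed regime: for $\tau\in(3,4]$ the moments $\expec[W^3]$ and higher diverge, so a crude Taylor bound on $\tanh$ is unavailable, and one must verify that the bulk $\{W\le K\}$ genuinely controls $\wdt{M}$ from below while the heavy tail contributes only the lower-order term of size $z^{*(\tau-1)}$. A secondary technical point is the asymptotic inversion of the implicit logarithmic relation at $\tau=5$, which needs a short bootstrap to confirm that $\log(1/z^*)\asymp\log(1/B)$ (respectively $\log(1/(\beta-\betacan))$) before the exponent can be read off.
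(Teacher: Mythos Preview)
Your proposal is correct and matches the paper's approach: extract the scaling of $z^*$ from Propositions~\ref{prop-upperbz}--\ref{prop-lowerbz} via the cancellation $\sinh(\betacan)\nu=1$, then transfer to $\wdt{M}$ using elementary bounds on $\tanh$. The step you flag as most delicate---the lower bound $\wdt{M}\gtrsim z^*$ when $\tau\in(3,4]$---is in fact handled more simply in the paper than by your truncation to $\{W\le K\}$: the paper uses the global inequality $\tanh x\ge x-x^2$ for $x\ge 0$, which yields $\wdt{M}(\beta,0^+)\ge\sqrt{\expec[W]\sinh(\beta)}\,z_0^*-\sinh(\beta)\nu\,z_0^{*2}$ and requires only $\expec[W^2]<\infty$, so no separate treatment of the heavy-tailed case is needed. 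Your $\tau=5$ bootstrap is exactly what the paper does.
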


\begin{proof}
{\bf Proof for exponent}  $\boldsymbol{\beta}$.
We start by giving upper bounds on the magnetization. From~\eqref{eq-magnetization} it follows that
	\be\label{up_b_M}
	\wdt{M}(\beta,B)\, = \, \mathbb{E}\left[\tanh \left(\ab W z^* + B\right) \right]
	 \leq B+ \sqrt{\mathbb{E}\left[W\right]\sinh\left(\beta\right)} z^*.
	\ee
We first analyze $\boldsymbol{\beta}$ and hence take the limit $B\searrow0$ for $\beta>\betacan$. This gives 
	\be\label{emme_zero_piu}
	\wdt{M}(\beta,0^+) \leq \sqrt{\mathbb{E}\left[W\right]\sinh\left(\beta\right)} z^*_0,
	\ee
where we write $z^*_0 = \lim_{B\searrow0}z^*$. Since $\wdt{M}(\beta,0^+)>0$ by the definition of $\betacan$, the same must be true for $z^*_0$. We will deal first with the cases 
$\tau \in (3,5)$ and $\mathbb{E}[W^{4}]<\infty$. \col{Taking}  the limit $B\searrow0$ in~\eqref{eq-upbdzE4} and dividing by $z^*_0$, we get for $\tau\neq5$
	\be
	C_1 z^{*\boldsymbol{\delta}-1}_0 \leq \sinh(\beta) \nu  -1,
	\ee
and hence, observing that $\boldsymbol{\beta}=1/(\boldsymbol{\delta-1})$,
	\be\label{upperz}
	z^{*}_0 \leq C_1^{-\boldsymbol{\beta}} \left(\sinh(\beta) \nu  -1\right)^{\boldsymbol{\beta}}.
	\ee
From a Taylor expansion of $\sinh(\beta)$ around $\betacan=\asinh(\expec[W]/\expec[W^2])$ it follows that
	\be\label{zorro}
	\sinh(\beta) \nu  -1 \leq \cosh(\beta)\nu (\beta-\betacan).
	\ee
Hence,
	\be
	\wdt{M}(\beta,0^{+})\leq 
	\sqrt{\mathbb{E}\left[W\right]\sinh\left(\beta\right)}
	C_1^{-\boldsymbol{\beta}}\left(\cosh(\beta)\nu \right)^{\boldsymbol{\beta}}
	(\beta-\betacan)^{\boldsymbol{\beta}},
	\ee
so that it is easy to see that
	\be
	\limsup_{\beta\searrow\betacan}\frac{\wdt{M}(\beta,0^{+})}
	{(\beta-\betacan)^{\boldsymbol{\beta}}} <\infty.
	\ee
The lower bound can be obtained in a similar fashion. Starting from $\tanh x \ge x - x^2$ and taking the limit $B\searrow0$ for $\beta>\betacan$  in \eqref{eq-magnetization}, we obtain
	\be\label{mb0lower}
	\wdt{M}(\beta,0^+) \geq \sqrt{\mathbb{E}\left[W\right]\sinh\left(\beta\right)} z^*_0
	-\sinh(\beta)\nu  {z^*_0}^2 .
	\ee
Again, starting from the lower bound \eqref{lower_b_z}, taking $B\searrow0$ and dividing by $z^*_0$
	\be\label{lowerz}
	z^{*}_0 \geq c_1^{-\boldsymbol{\beta}} \left(\sinh(\beta) \nu  -1\right)^{\boldsymbol{\beta}},
	\ee
and, by a Taylor expansion around $\beta_c$,
	\be\label{taylorexpx}
	\sinh(\beta)\nu -1= \cosh(\beta_c)\nu (\beta-\beta_c)+O((\beta-\beta_c)^2).
	\ee
Using \eqref{upperz}, \eqref{lowerz} and \eqref{taylorexpx} in \eqref{mb0lower} we obtain:
	\begin{align}
	\label{low_bound_emme0}
	\wdt{M}(\beta,0^+) 
	&\ge \sqrt{\mathbb{E}\left[W\right]\sinh\left(\beta\right)}  
	c_1^{-\boldsymbol{\beta}} 
	\left [ \cosh(\beta_c)\nu (\beta-\beta_c)+O((\beta-\beta_c)^2) \right ]^{\boldsymbol{\beta}} \nn \\
	&-\sinh(\beta)\nu C_1^{-2\boldsymbol{\beta}} \left [ \cosh(\beta_c)\nu (\beta-\beta_c)
	+O((\beta-\beta_c)^2) \right ]^{2\boldsymbol{\beta}},
	\end{align}
which shows that also
	\be
	0< \liminf_{\beta\searrow\betacan}\frac{\wdt{M}(\beta,0^{+})}
	{(\beta-\betacan)^{\boldsymbol{\beta}}},
	\ee
concluding the proof for the exponent $\boldsymbol{\beta}$ in the cases $\tau \in (3,5) $ and $\expec[W^4]<\infty$ . In the case $\tau=5$ we can prove the upper bound for $M(\beta, 0^{+})$ in a similar fashion, i.e., taking the limit $B\searrow0$ for $\beta>\betacan$ of  \eqref{eq-UpperExi-tau5} and dividing by $z^{*}_{0}$. This yields  to
	\be\label{zorro2}
	{z^{*}_{0}}^{2}\le \frac{\sinh(\beta) \nu  -1}{C_{1}\log (1/z^{*}_{0})}
	\le  \frac{ \cosh(\beta)\nu (\beta-\betacan)}{C_{1}\log (1/z^{*}_{0})}
	\le \widetilde{C} \frac{(\beta-\betacan)}{\log (1/z^{*}_{0})},
	\ee
where \eqref{zorro} has been used in order to obtain the second inequality and $\cosh(\beta)$ has been bounded in a right neighborhood of $\betacan$ to obtain the third inequality. Since $x \mapsto 1/\log(1/x)$ is increasing in (0,1) and $z^{*}_{0}\le C(\beta-\betacan)^{\frac 1 2}$ for some $C>0$,\footnote{The proof of $z^{*}_{0}\le C(\beta-\betacan)^{\frac 1 2}$ can be obtained  by rewriting  \eqref{zorro2} as $-{z^{*}_{0}}^{2}\log {z^{*}_{0}}^{2} \le k (\beta - \betacan)$, for some $k>0$. Since  $w < -w \log w$ for  $w< 1/\e$, we conclude that for $\beta - \betacan>0$ small enough, the previous inequality  gives ${z^{*}_{0}}^{2}< k(\beta-\betacan)$.} form \eqref{zorro2} we obtain:
 	\be
	 {z^{*}_{0}}^{2}\le \frac{\widetilde{C}(\beta-\betacan)}{C_{1}\log(1/[C(\beta-\betacan)^{1/2}]) }.
	 \ee
 The previous inequality together with \eqref{emme_zero_piu}, proves the upper bound
	 \be
	 \limsup_{\beta\searrow\betacan} \frac{\wdt{M}(\beta,0^{+})}
	 { \left (\frac{\displaystyle \beta-\betacan}{\displaystyle \log(1/(\beta-\betacan)) }\right)^{1/2}}< 		\infty.
 	\ee
The lower bound can be obtained in the same way. Indeed, from \eqref{eq-LowerExi-tau5} in the limit $B\rightarrow 0$, we obtain, for some positive constants $\widetilde C$ and $\widehat C$
	\be\label{zorro3}
	{z^{*}_{0}}^{2}\ge \frac{\sinh(\beta) \nu  -1}{C_{1}\log (1/z^{*}_{0})}
	\ge \widetilde{C} \frac{(\beta-\betacan)}{\log (1/z^{*}_{0})} 
	\ge {\widehat C} \frac{(\beta-\betacan)}{ \log(1/(\beta-\betacan))}, 
	\ee
%for properly chosen positive constant  $\tilde{C}$. 
%Again, observing that  $x \mapsto 1/\log(1/x)$ is increasing in (0,1) and that  $z^{*}_{0}\ge C(\beta-\betacan)^{\frac 1 2}$ for some $C>0$, \footnote{The proof of $z^{*}_{0}\ge C(\beta-\betacan)^{\frac 1 2}$ can be obtained  ....}  we have 
% \be\label{zorro4}
% {z^{*}_{0}}^{2}\ge \frac{\tilde{C}(\beta-\betacan)}{C_{1}\log(1/[C(\beta-\betacan)^{1/2}]) }.
% \ee
where, once more, we have used that $x \mapsto 1/\log(1/x)$ is increasing in (0,1) and  the bound $z^{*}_{0}\ge C(\beta-\betacan)^{1/(2-\varepsilon)}$ for some $C>0$ and any $0<\varepsilon < 2$.\footnote{The proof of  the inequality $z^{*}_{0}\ge C(\beta-\betacan)^{1/(2-\varepsilon)}$, for $0<\varepsilon < 2$  can be obtained starting from the rightmost inequality of \eqref{zorro3} combined with the fact that $\log 1/x \le A_{\varepsilon} x^{-\varepsilon}$ for all $x\in (0,1)$ and any $\varepsilon >0$.}
 The previous inequality plugged in \eqref{mb0lower} gives
	 \be
	 \liminf_{\beta\searrow\betacan} \frac{\wdt{M}(\beta,0^{+})}
	 { \left (\frac{\displaystyle \beta-\betacan}{\displaystyle \log(1/(\beta-\betacan)) }\right)^{1/2}} > 0,
 	\ee
concluding the proof for $\tau=5$.
\qed

\vspace{0.3cm}
\noindent
{\bf Proof for exponent}  $\boldsymbol{\delta}$.
We continue with the analysis for $\boldsymbol{\delta}$. Setting $\beta=\beta_c$ in \eqref{eq-upbdzE4}, we obtain
	\be\label{up_b_zcrit}
	z^*(\beta_c,B)\le( C_1 \sqrt{\expec[W]})^{-\frac 1 {\boldsymbol{\delta}}} 
	B^{ 1/ {\boldsymbol{\delta}}}.
	\ee
Using this inequality in \eqref{up_b_M} with $\beta=\beta_c$, we obtain
	\be
	M(\beta_c,B) \le B+ \frac{\expec[W] C_1^{-\frac 1 {\boldsymbol{\delta}}} }
	{(\sqrt{\expec[W^2]})^{1+\frac 1 {\boldsymbol{\delta}}}}B^{1 /{\boldsymbol{\delta}}},
	\ee
which proves that 
	\be
	\limsup_{B\searrow 0} \frac{M(\beta_c,B) }{B^{ 1/ {\boldsymbol{\delta}}}} < \infty
	\ee
since $ {\boldsymbol{\delta}}>1$. Inequality \eqref{lower_b_z} with $\beta=\beta_c$ gives
	\be\label{zeta_cri_delta}
	z^*(\beta_c,B)\ge c_1^{-1 / {\boldsymbol{\delta}} } 
	\left (\frac{1}{\sqrt{\expec[W^2]}} -e_1(\beta_c,B)\right )^{1/{\boldsymbol{\delta}}} 
	B^{1/ \boldsymbol{\delta}}\;.
	\ee
This estimate, along with \eqref{up_b_zcrit}, will be used in the lower bound  of the magnetization at $\beta=\beta_c$  obtained by $\tanh x \ge x - x^2$:
	\be
	M(\beta_c,B) \ge B 
	+ \frac{{\expec [W]}}{\sqrt{\expec[W^{2}]}}(1-2B) z^*(\beta_c,B)-  {z^*(\beta_c,B)}^{2} -B^2,
	\ee
giving, for $B>0$ small,
	\begin{align}\label{lower_bound_emme_cri}
	M(\beta_c,B) & \ge B  + \frac{{\expec [W]}}{\sqrt{\expec[W^{2}]}}(1-2B) 
	c_1^{-1 / {\boldsymbol{\delta}} } \left (\frac{1}{\sqrt{\expec[W^2]}} 
	-e_1(\beta_c,B)\right )^{1/{\boldsymbol{\delta}}} B^{1/ \boldsymbol{\delta}}\nn \\
	& - ( C_1 \sqrt{\expec[W]})^{- 2 /{\boldsymbol{\delta}}} B^{ 2/ {\boldsymbol{\delta}}} -B^{2}.
	\end{align}
Recalling that $\lim_{B\searrow 0} e_1(\beta_c,B)=0$ and ${\boldsymbol{\delta}} >1$, the previous bound gives
	\be
	\liminf_{B\searrow 0} \frac{M(\beta_c,B) }{B^{ 1/ {\boldsymbol{\delta}}}} 
	\ge  \sqrt{\expec [W]} (c_1 \sqrt{\expec[W^2]}  )^{-1/\delta} > 0,
	\ee
which concludes the proof for ${\boldsymbol{\delta}} $ in the cases $\tau \in (3,5) $ and $\expec[W^2]<\infty$. The analysis for $\tau=5$ can be performed in a similar way as for $ {\boldsymbol{\beta}}$. %\Gib{Should we give the explicit proof, instead?}
%All other bounds also can be derived exactly the same as in Section 5.3 of~\cite{DGH2} with the expansion %of $\tanh(\beta)$ around $\betacan$ replaced by the expansion of $\sinh(\beta)$.
%\todo{Write this out (?)}
\end{proof}

\noindent
%We can now also prove Corollary~\ref{cor-JointScaling}:
\begin{proof}[Proof of Corollary~\ref{cor-JointScaling}] The proof can be simply adapted as in~\cite[Corollary 2.9]{DGH2}.
\end{proof}

%%%%%%%%%%%%%%%%%%%%%%%%%%%%%%%%%%%%%%%%%%%%%%%%%%%%%%%%%%%%%%%%%%%%%%%%%%%%%%%%%%%%%%%%%%%%%%
%%%%%%%%%%%%%%%%%%%%%%%%%%%%%%%%%%%%%%%%%%%%%%%%%%%%%%%%%%%%%%%%%%%%%%%%%%%%%%%%%%%%%%%%%%%%%%

\subsection{Susceptibility: critical exponents $\boldsymbol{\gamma}$ and $\boldsymbol{\gamma'}$}
\label{subsub2}
We now analyze the susceptibility and compute the critical exponents $\boldsymbol{\gamma}$ and $\boldsymbol{\gamma'}$. We start by computing the former under more general conditions than those of Theorem \ref{thm-CritExp}.
\begin{theorem}[Value of  $\boldsymbol{\gamma}$]\label{theorem_gamma}
For $\expec[W^2]<\infty$, 
	\be\label{lim_chi_gamma}
	\lim_{\beta \nearrow \betacan}\wdt{\chi}(\beta,0^+) (\betacan-\beta) 
	\,=\, \frac{\expec[W]^2}{\expec[W^2]}	\tanh(\betacan),
	\ee
so that $\boldsymbol{\gamma}=1$.
\end{theorem}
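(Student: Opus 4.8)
The plan is to derive an explicit closed form for $\wdt{\chi}(\beta,0^+)$ by differentiating the magnetization \eqref{eq-magnetization} in $B$, and then to extract its leading behaviour as $\beta\nearrow\betacan$. Throughout I work in the high-temperature regime $\beta<\betacan$, where $\sinh(\beta)\nu<1$. In this regime the fixed point equation \eqref{fixpGRG0} at $B=0$ has $z=0$ as its \emph{unique} non-negative solution: the map $g(z)=\expec[\tanh(\ab Wz)\ab W]$ satisfies $g(0)=0$, is increasing and concave on $[0,\infty)$, and has $g'(0)=\ab^2\expec[W^2]=\sinh(\beta)\nu<1$, so $g(z)<z$ for all $z>0$. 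Hence $z^*(\beta,0^+)=0$. Since $\wdt{\chi}(\beta,B)=\partial_B\wdt{M}(\beta,B)$ by \eqref{lim_susc_ann}, differentiating \eqref{eq-magnetization} under the expectation gives
\be
\wdt{\chi}(\beta,B)=\expec\Big[\big(1-\tanh^2(\ab Wz^*+B)\big)\big(\ab W\,\partial_B z^*+1\big)\Big].
\ee
Evaluating at $B=0^+$ and using $z^*(\beta,0^+)=0$, so that the argument of $\tanh$ vanishes, yields
\be\label{chi-plan}
\wdt{\chi}(\beta,0^+)=1+\ab\,\expec[W]\,\partial_B z^*\big|_{B=0^+}.
\ee

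Next I would compute $\partial_B z^*|_{B=0^+}$ by implicit differentiation of \eqref{fixpGRG0}. Differentiating in $B$ gives
\be
\partial_B z^*=\expec\Big[\big(1-\tanh^2(\ab Wz^*+B)\big)\big(\ab W\,\partial_B z^*+1\big)\ab W\Big],
\ee
and setting $B=0^+$, $z^*=0$ reduces this to the scalar linear equation $\partial_B z^*=\ab^2\expec[W^2]\,\partial_B z^*+\ab\expec[W]$. Solving, and using $\ab^2\expec[W^2]=\sinh(\beta)\nu$, gives
\be\label{dBz-plan}
\partial_B z^*\big|_{B=0^+}=\frac{\ab\,\expec[W]}{1-\sinh(\beta)\nu},
\ee
which is finite and positive precisely because $\sinh(\beta)\nu<1$ for $\beta<\betacan$. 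Substituting \eqref{dBz-plan} into \eqref{chi-plan} and using $\ab^2\expec[W]^2=\sinh(\beta)\expec[W]$ produces
\be
\wdt{\chi}(\beta,0^+)=1+\frac{\sinh(\beta)\expec[W]}{1-\sinh(\beta)\nu}.
\ee

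Finally I would multiply by $(\betacan-\beta)$ and let $\beta\nearrow\betacan$. The additive constant $1$ contributes $0$. For the remaining term, a first-order Taylor expansion of $\sinh$ around $\betacan=\asinh(1/\nu)$ gives $1-\sinh(\beta)\nu=\cosh(\betacan)\nu\,(\betacan-\beta)+O((\betacan-\beta)^2)$, while $\sinh(\beta)\expec[W]\to\expec[W]/\nu$. Hence
\be
\lim_{\beta\nearrow\betacan}\wdt{\chi}(\beta,0^+)(\betacan-\beta)=\frac{\expec[W]/\nu}{\cosh(\betacan)\nu}=\frac{\expec[W]^3}{\expec[W^2]^2\cosh(\betacan)}=\frac{\expec[W]^2}{\expec[W^2]}\tanh(\betacan),
\ee
where the last equality uses $\sinh(\betacan)=1/\nu$ (so $\tanh(\betacan)=1/(\nu\cosh(\betacan))$) together with $\nu=\expec[W^2]/\expec[W]$. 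This is \eqref{lim_chi_gamma}, and since the ratio $\wdt{\chi}(\beta,0^+)/(\betacan-\beta)^{-1}$ tends to a positive constant, it gives $\boldsymbol{\gamma}=1$.

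The main obstacle is the analytic justification rather than the algebra: one must legitimately differentiate under the expectation sign and differentiate $z^*(\beta,\cdot)$ as a one-sided derivative at $B=0^+$. Both steps require only $\expec[W^2]<\infty$. For the interchange of $\partial_B$ and $\expec$, bounding $1-\tanh^2\le 1$ and $\partial_B z^*$ uniformly for small $B>0$ yields an integrable envelope of the form $c(1+\ab W)$, so dominated convergence applies once $\expec[W]<\infty$. For the existence and value of $\partial_B z^*|_{B=0^+}$, the implicit function theorem applies because the linearized operator has nonzero ``slope'' $1-\sinh(\beta)\nu$ for $\beta<\betacan$; here the coefficient $\ab^2\expec[W^2]=\sinh(\beta)\nu$ is exactly where the second-moment hypothesis enters. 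I would record separately that $z^*(\beta,0^+)=0$ (from the concavity argument above) and that $z^*$ is monotone and smooth in $B$ near $0^+$, which guarantees the one-sided derivative used throughout.
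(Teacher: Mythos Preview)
Your proof is correct and follows essentially the same approach as the paper: differentiate \eqref{eq-magnetization} and \eqref{fixpGRG0} in $B$, use $z^*(\beta,0^+)=0$ for $\beta<\betacan$ to reduce to a linear equation for $\partial_B z^*\big|_{B=0^+}$, solve to obtain $\wdt{\chi}(\beta,0^+)=1+\sinh(\beta)\expec[W]/(1-\sinh(\beta)\nu)$, and Taylor expand $\sinh$ about $\betacan$. The only minor addition is your explicit concavity argument for $z^*_0=0$ and the dominated-convergence/implicit-function discussion, which the paper leaves implicit.
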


\begin{proof}
From Theorem~\ref{term_lim_annealed} it follows that in the one-phase region, i.e., for $\beta<\beta_c$ or $B\neq 0$,
	\be\label{eq-chiexplicit}
	\wdt{\chi}(\beta,B) = \frac{\partial}{\partial B} \wdt{M}(\beta,B) 
	= \mathbb{E}\left[\left(1+\ab W\frac{\partial z^*}{\partial B}\right)
	\left( 1-\tanh^2 \left(\ab W z^* + B\right)\right) \right].
	\ee
We can also compute the derivative of $z^*$ by taking the derivative of~\eqref{fixpGRG0}:
	\be\label{eq-derivative-z-B}
	\frac{\partial z^*}{\partial B} 
	= \mathbb{E}\left[\left(\ab W+\ab^{2}{W^2}\frac{\partial z^*}{\partial B}\right)
	\left(1-\tanh^2 \left(\ab W z^* + B\right)\right) \right].
	\ee
If we take the limit $B\searrow0$ for $\beta<\betacan$, then the $\tanh^2(\cdot)$ term vanishes, since by definition of $\betacan$ it holds that $z^*_0\equiv\lim_{B\searrow0} z^* = 0$. Hence, if we write 
	\be
	\frac{\partial z^*_0}{\partial B} = \lim_{B\searrow 0} \frac{\partial}{\partial B} z^*(\beta,B),
	\ee 
then~\eqref{eq-derivative-z-B} simplifies to
	\be
	\frac{\partial z^*_0}{\partial B} = \sqrt{\expec[W]\sinh(\beta)}+\sinh(\beta)\nu \frac{\partial z^*_0}{\partial B}.
	\ee
Solving for $\frac{\partial z^*_0}{\partial B}$ gives
	\be
	\frac{\partial z^*_0}{\partial B} = \frac{\sqrt{\expec[W]\sinh(\beta)}}{1-\sinh(\beta)\nu }.
	\ee
Also taking the limit $B\searrow0$ in~\eqref{eq-chiexplicit} and using the above gives
	\be
	\wdt{\chi}(\beta,0^+) = 1 + \frac{\expec[W]\sinh(\beta)}{1-\sinh(\beta)\nu }.
	\ee
From a Taylor expansion around $\beta_c$, we get that
	\be\label{eq-Taylor-sinh}
	\sinh(\betacan) - \cosh(\betacan)(\betacan-\beta) \leq \sinh(\beta) 
	\leq \sinh(\betacan) - \cosh(\beta)(\betacan-\beta),
	\ee
so that
	\be
	1 + \frac{\expec[W]^2\sinh(\beta)}{\expec[W^2]\cosh(\betacan)(\betacan-\beta)} 
	\leq \wdt{\chi}(\beta,0^+) \leq 1 + \frac{\expec[W]^2\sinh(\beta)}{\expec[W^2]\cosh(\beta)(\betacan-\beta)},
	\ee
since $\sinh(\betacan) \nu  =1$.
Hence, (\ref{lim_chi_gamma}) follows.
%\be
%0 < \lim_{\beta \nearrow \betacan}\wdt{\chi}(\beta,0^+) (\betacan-\beta) < \infty,
%\ee
%proving the theorem.
\end{proof}

\noindent
We now analyze $\boldsymbol{\gamma'}$:
\begin{theorem}[Value of  ${\boldsymbol{\gamma}'}$]\label{theorem-gammaprime}
For $W$ satisfying Condition \ref{cond-WR-GRG-tail}   with $\expec[W^4]<\infty$ or with $\tau \in (3,5)$,
	\be
	\boldsymbol{\gamma'}=1.
	\ee
\end{theorem}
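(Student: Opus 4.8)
The plan is to push the explicit susceptibility formula \eqref{eq-chiexplicit} and the implicit relation \eqref{eq-derivative-z-B} into the low-temperature regime, where the crucial new feature (compared to the proof of $\boldsymbol{\gamma}$) is that now $z^*_0 \equiv \lim_{B\searrow 0} z^*(\beta,B) > 0$ for $\beta > \betacan$, so the $\tanh^2$-terms no longer vanish. Solving \eqref{eq-derivative-z-B} for the derivative and letting $B\searrow 0$ (the limits being justified by monotonicity and dominated convergence exactly as in Theorem~\ref{theorem_gamma}) gives
\be
\frac{\partial z^*_0}{\partial B} = \frac{\ab\,\mathbb{E}\!\left[W\big(1-\tanh^2(\ab W z^*_0)\big)\right]}{D},
\qquad
D := 1-\ab^2\mathbb{E}\!\left[W^2\big(1-\tanh^2(\ab W z^*_0)\big)\right],
\ee
and $\wdt{\chi}(\beta,0^+)=\mathbb{E}\big[(1+\ab W\,\partial_B z^*_0)(1-\tanh^2(\ab W z^*_0))\big]$. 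Since $z^*_0\searrow 0$ as $\beta\searrow\betacan$ by Lemma~\ref{lem-contphasetrans}, the numerator above tends to $\ab\mathbb{E}[W]>0$ and stays bounded, so everything reduces to proving the single estimate $D \asymp (\beta-\betacan)$; then $\partial_B z^*_0 \asymp (\beta-\betacan)^{-1}$, the $\partial_B z^*_0$-term dominates $\wdt{\chi}$, and $\boldsymbol{\gamma'}=1$ follows.

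The main obstacle is that, naively, $D = \ab^2\mathbb{E}[W^2\tanh^2(\ab W z^*_0)] - (\sinh(\beta)\nu-1)$ is a difference of two positive quantities \emph{each} of the same order, so crude two-sided bounds from Condition~\ref{cond-WR-GRG-tail} cannot rule out cancellation. I would resolve this by rewriting $D$ as a single positive expectation. Using the fixed-point equation at $B=0^+$ in the form $1=\ab^2\mathbb{E}\big[W^2\,\tanh(\ab W z^*_0)/(\ab W z^*_0)\big]$, one gets
\be
D = \ab^2\,\mathbb{E}\!\left[W^2\big(\varphi(\ab W z^*_0)-\psi(\ab W z^*_0)\big)\right],
\qquad \varphi(t)=\frac{\tanh t}{t},\quad \psi(t)=1-\tanh^2 t.
\ee
The key elementary identity is that $\varphi(t)=\tfrac1t\int_0^t\psi(s)\,\dint s$ is the average of $\psi$ over $[0,t]$, and $\psi$ is strictly decreasing on $(0,\infty)$; hence $\varphi(t)>\psi(t)$ for all $t>0$. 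Thus $D$ is manifestly a positive expectation, and the cancellation problem disappears.

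With this in hand the asymptotics are routine truncation à la Lemma~\ref{lem-truncatedmoments}. One checks $\varphi(t)-\psi(t)\asymp \min(t^2,1/t)$ (it is $\sim\tfrac23 t^2$ near $0$ and $\sim 1/t$ at infinity), and splits the expectation at $W\sim 1/(\ab z^*_0)$: the small-$W$ part is $\asymp \ab^4 z^{*2}_0\,\mathbb{E}[W^4\indic{W\le 1/(\ab z^*_0)}]$ and the large-$W$ part is $\asymp \ab z^{*-1}_0\,\mathbb{E}[W\indic{W>1/(\ab z^*_0)}]$, both of order $z_0^{*\,\tau-3}$ by \eqref{eq-trunc-moments-w}–\eqref{eq-trunc-moments-w-2}; when $\mathbb{E}[W^4]<\infty$ dominated convergence (using $\varphi-\psi\le Ct^2$) gives instead $D\sim\tfrac23\ab^4\mathbb{E}[W^4]\,z^{*2}_0$. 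The same truncation applied to $\sinh(\beta)\nu-1=\ab^2\mathbb{E}[W^2(1-\varphi(\ab W z^*_0))]$ yields $\sinh(\beta)\nu-1\asymp z_0^{*\,\tau-3}$ (resp.\ $\asymp z^{*2}_0$), and the Taylor bound \eqref{zorro} gives $\sinh(\beta)\nu-1\asymp(\beta-\betacan)$.

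Combining these, $D\asymp(\beta-\betacan)$ in both cases, so $\partial_B z^*_0\asymp(\beta-\betacan)^{-1}$, and writing
\be
\wdt{\chi}(\beta,0^+)=\mathbb{E}\!\left[1-\tanh^2(\ab W z^*_0)\right]+\ab\,\frac{\partial z^*_0}{\partial B}\,\mathbb{E}\!\left[W\big(1-\tanh^2(\ab W z^*_0)\big)\right]
\ee
the first summand is bounded while the second is $\asymp(\beta-\betacan)^{-1}$. Hence $\wdt{\chi}(\beta,0^+)\asymp(\beta-\betacan)^{-1}$, i.e.\ $\boldsymbol{\gamma'}=1$. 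The $\tau=5$ borderline, if needed, would be handled as in Theorem~\ref{theo-beta-delta} by replacing $z_0^{*\,\tau-3}$ with the logarithmically corrected order $z^{*2}_0\log(1/z^*_0)$; I expect the hardest part to be the positivity/single-expectation reformulation of $D$, as it is what makes the bounds robust under the mere power-law Condition~\ref{cond-WR-GRG-tail}.
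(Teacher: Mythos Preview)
Your proof is correct, and it takes a genuinely different route from the paper's.

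The paper splits into the two cases $\expec[W^4]<\infty$ and $\tau\in(3,5)$ and, in the latter, treats the denominator $D$ directly as a difference: it writes $D_2(\beta)=1-\alpha^2\expec[W^2]+\alpha^2\expec[W^2\tanh^2(\alpha W z^*_0)]$, bounds the second expectation via integration by parts and the two-sided power-law hypothesis, and is then forced to prove the delicate comparison $m(\tau)/K(\tau)>1$ between two explicit integrals (see \eqref{bound-D} and the surrounding argument) to rule out cancellation between the $1-\alpha^2\expec[W^2]$ term and the $\tanh^2$-term. This comparison is the heart of the paper's argument.

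Your approach bypasses this entirely. By dividing the fixed-point equation by $z^*_0$ you rewrite $D=\ab^2\expec\big[W^2(\varphi-\psi)(\ab W z^*_0)\big]$, and the observation that $\varphi(t)=\tfrac1t\int_0^t\psi$ is the running average of the strictly decreasing function $\psi=\mathrm{sech}^2$ makes $\varphi>\psi$ immediate. This turns $D$ into a single \emph{positive} expectation, after which the two-sided estimate $D\asymp z_0^{*\,\boldsymbol{\delta}-1}\asymp(\beta-\betacan)$ follows by straightforward truncation at $W\sim 1/(\ab z^*_0)$ using Lemma~\ref{lem-truncatedmoments}, uniformly over both moment regimes. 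The elementary asymptotics $\varphi(t)-\psi(t)\sim\tfrac23 t^2$ near $0$ and $\sim 1/t$ at infinity that you use are correct, and the dominated-convergence step for $\expec[W^4]<\infty$ is clean since $(\varphi-\psi)(t)/t^2$ is bounded on $(0,\infty)$.

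What your approach buys is a unified, conceptual proof that avoids the integration-by-parts machinery and the $m(\tau)/K(\tau)>1$ comparison; what the paper's approach retains is a more explicit control of the prefactors through the integrals $K(\tau),k(\tau),M(\tau),m(\tau)$. For the purpose of establishing $\boldsymbol{\gamma'}=1$, your route is shorter and more robust.
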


\begin{proof}
We split the proof into the two cases that cover the hypotheses of the theorem.

\vspace{0.2cm}
\noindent
{\bf  (a) Proof  under the assumption $\expec[W^4]<\infty$}.
We are now in the regime where $\beta>\beta_c$, so that $z^*_0>0$. We start from~\eqref{eq-derivative-z-B}, take the limit $B\searrow0$ and 
linearize the \col{hyperbolic} tangent. In order to control this approximation, we define $g(x)=x^2-\tanh ^2(x)$ and remark that on the basis of our \col{assumption} on $W$, we have that 
%$\E\left[ W\tanh^2(W)\right]<\infty$. Thus $\E[g(W)]<\infty$ and $\E[Wg(W)]<\infty$. Moreover,  since $\E[W^2\tanh^2(W)]<\infty$ and $\E[W^4]< \infty$, also have that 
$\E[(W^2\vee 1)g(W)]<\infty$. It will be useful also to factorize  $g(x)=x^4k(x)$ with $k(x)=O(1)$ as $x\to 0$, so that we also have %also $\expec[W^5k(W)]< \infty$ and  
$\expec[W^6k(W)]< \infty$.
%\todo{Check what is really useful in the previous discussion and delete what is not}
This gives
	\begin{align}\label{eq_der_zeta_zero}
	\frac{\partial z^*_0}{\partial B} 
	&= \mathbb{E}\left[\left(\ab W+\ab^{2} {W^2}\frac{\partial z^*_0}{\partial B}\right)
	\left(1-\tanh^2\left (\ab W z^*_0\right)\right) \right] \nn\\
	&= \sqrt{\expec[W]\sinh(\beta)}-e_{\sss 0} + \frac{\partial z^*_0}{\partial B}\left(\sinh(\beta)\nu 
	-\ab^4\, \expec[W^4]z_0^{*2}\right)+\nn\\
	&\qquad+\E\left[ \left (\frac{\partial z^*_0}{\partial B}\cbetaq W^2+\cbeta W\right)
	g\left( \cbeta W z_0^*\right)\right],
	\end{align}
where
	\be
	e_{\sss 0} \, = \, \left(\frac{\sinh(\beta)}{\mathbb{E}\left[W\right]}\right)^{3/2}
	\mathbb{E}\left[W^3\right]z_0^{*2}.
	\ee
Solving (\ref{eq_der_zeta_zero}) for $\frac{\partial z^*_0}{\partial B}$ gives
	\be\label{eq-approx-dz-supercrit}
	\frac{\partial z^*_0}{\partial B} = \frac{\sqrt{\expec[W]\sinh(\beta)}
	-e_{\sss 0}- \E\left[ \cbeta W g\left( \cbeta W z_0^*\right)\right]}{1-\sinh(\beta)\nu 
	+\ab^4\, \expec[W^4]z_0^{*2}
	-\E\left [\cbetaq W^2g\left( \cbeta W z_0^*\right) \right ]}.
	\ee
\col{To analyze \eqref{eq-approx-dz-supercrit} we use the lower and upper bounds in Propositions
\ref{prop-lowerbz} and \ref{prop-upperbz}
}	
Taking the limit $B\searrow 0$ in \eqref{lower_b_z} with $\bdelta=3$, $c_{1}$ given in \eqref{popi} and dividing by $z^*_0$, we obtain
	\be\label{z0astq1}
	z_0^{*2} \, \geq \, 3\, \frac{\expec[W]^2}{\expec[W^4]}\frac{1}{\sinh^2(\beta)} \left(\sinh(\beta)\nu -1\right).
	\ee
Taking  the same limit $B\searrow 0$ in \eqref{upper_b_z} and dividing by $z^*_0$ we obtain also
	\be\label{z0astq2}
	z_0^{*2} \, \leq \, 6\, \frac{\expec[W]^2}{\expec[W^4 \indic{\ab W z^* 
	\leq \atanh\frac18}]}\frac{1}{\sinh^2(\beta)} \left(\sinh(\beta)\nu -1\right).
	\ee
By Taylor expansion,
	\be\label{poldo}
	\sinh(\beta)\nu -1= \cosh(\beta_c)\nu (\beta-\beta_c)+O((\beta-\beta_c)^2),
	\ee
we conclude, from \eqref{z0astq1}, \eqref{z0astq2}, and the fact that  $\expec[W^4 \indic{ \ab W z^* \leq \atanh\frac18}] \to \expec[W^4]$ as $\beta \to \beta_c$,   that ${z_0^*}^2=O(\beta-\beta_c)$.
\col{Using this}, we can now evaluate the terms in numerator and denominator of 	\eqref{eq-approx-dz-supercrit} as  $\beta \to \beta_c$. The first term in the numerator has a non vanishing finite limit, while $e_0=O(\beta-\beta_c)$. The third term 
(\col{ignoring} the irrelevant multiplicative  factor   $ \cbeta$ ) is $ \E\left[  W g\left( \cbeta W z_0^*\right)\right]=\ab^4 z_0^{*4}\, \E \left[W^5k\left (\cbeta W z_0^*\right )\right]=O((\beta-\beta_c))$. Indeed, since $k(x)\le \frac {1}{x^{2}}$, 
	\eqn{
	\ab^4 z_0^{*4}\, \E \left[W^5k\left (\cbeta W z_0^*\right )\right] 
	\le  \cbetaq \E \left[W^3\right]  z_0^{*2}=O(\beta-\beta_c).
	}
Let us now consider the denominator and define
	\be
	D(\beta):= 1-\sinh(\beta)\nu +\ab^4\, \expec[W^4]z_0^{*2}.
	\ee
By \eqref{z0astq1},  \eqref{z0astq2} and  \eqref{poldo},
	\be
	2 \cosh(\beta_c)\nu (\beta - \beta_c)
	+O((\beta - \beta_c)^2)\le D(\beta) \le (\col{a(\beta)}-1)\cosh(\beta_c) \nu (\beta - \beta_c)
	+O((\beta - \beta_c)^2),
	\ee
where \col{$a(\beta )$} is a function that converges to $6$  as $\beta \to \betacan$.  Thus, from  the previous display we obtain $D(\beta)=O(\beta-\beta_c)$. The fourth term in the denominator of \eqref{eq-approx-dz-supercrit}, again discarding an irrelevant factor and arguing as before, is $E\left [W^2g\left( \cbeta W z_0^*\right) \right ]=\ab^4 z_0^{*4}\, \E \left[W^6k\left (\cbeta W z_0^*\right )\right]=O((\beta-\beta_c)^2)$. Therefore, summarizing our findings,
	\be\label{ord_derz0}
	\frac{\partial z^*_0}{\partial B} = O((\beta-\beta_c)^{-1}).
	\ee

From \eqref{eq-chiexplicit}, the upper bound follows using \eqref{ord_derz0}:
	\be\label{ubound_chi}
	\wdt{\chi}(\beta,0)  \, \leq \, \mathbb{E}\left[\left(1+\ab W\frac{\partial z_0^*}{\partial B}\right) \right] 
	\, \leq \, 1+ \sqrt{\sinh(\beta)\expec[W]}O((\beta-\beta_c)^{-1}).
	\ee
%\be
%\wdt{\chi}(\beta,0)  \, \leq \, \mathbb{E}\left[\left(1+\sqrt{\frac{\sinh(\beta)}{\expec[W]}}W\frac{\partial z_0^*}{\partial B}\right) \right] \, \leq \, 1+ \frac{\expec[W]^2\sinh(\beta)}{2\expec[W^2]\cosh(\beta_c)(\beta-\betacan)}.
%\ee
Similarly, for the lower bound we use that $1-\tanh^2(x) \geq 1- x^2 \,$ for every $x$, we obtain
	\begin{align}\label{lbound_chi}
	\wdt{\chi}(\beta,0)  \, 
	&\geq \, \mathbb{E}\left[\left(1+\ab W\frac{\partial z_0^*}{\partial B}\right)
	\left( 1- \ab^{2} W^2 z_0^{*2}\right) \right]\nn  \\ 
	& = 1 + \mathbb{E}\left[\ab W\frac{\partial z_0^*}{\partial B}\right] 
	- \mathbb{E}\left[\ab^{2} W^2 z_0^{*2}\right]- \mathbb{E}\left[\ab^{3}W^3z_0^{*2}
	\frac{\partial z_0^*}{\partial B}\right]\nn\\
	&= 1+ \sqrt{\sinh(\beta)\expec[W]}O((\beta-\beta_c)^{-1})
	- \sinh(\beta) \nu O(\beta-\beta_c) - \ab^{3}\expec[W^3]O(1),
%&\geq \, 1 +\frac{\expec[W]^2\sinh(\beta)}{(1-\alpha)\expec[W^2]\cosh(\beta)(\beta-\betacan)} -\mathbb{E}\left[\left(\frac{\sinh\left(\beta\right)}{\mathbb{E}\left[W\right]}\right)^{3/2}W^3z_0^{*2}\frac{\sqrt{\expec[W]\sinh(\beta)}}{2\cosh(\beta_c)\nu (\beta-\beta_c)}\right]\\ \nn
%&\geq \, 1 +\frac{\expec[W]^2\sinh(\beta)}{(\alpha-1)\expec[W^2]\cosh(\beta)(\beta-\betacan)},
	\end{align}
again starting from \eqref{eq-chiexplicit}, using \eqref{ord_derz0} and ${z_0^*}^2=O(\beta-\beta_c)$. From \eqref{ubound_chi} and  \eqref{lbound_chi} we obtain
\be\label{limchi}
0 < \liminf_{\beta \searrow \betacan}\wdt{\chi}(\beta,0^+) (\beta-\betacan) \le  \limsup_{\beta \searrow \betacan}\wdt{\chi}(\beta,0^+) (\beta-\betacan)  < \infty,
\ee
proving the theorem in the case that $\expec[W^4]<\infty$.

\vspace{0.2cm}
\noindent
{\bf (b) Proof  for  $W$ 
%with ${\mathbb P}(W\le w)$ 
\col{satisfying} Condition \ref{cond-WR-GRG-tail} (ii).}
Now we generalize the previous proof in order to encompass also the case of those  $W$ whose distribution function  $F(w)=1-\pr(W>w)$ satisfies Condition \ref{cond-WR-GRG-tail}(ii). 
%We proceed as in theprevious cases. 
\col{We start by \co{defining}}
	\be
	h_{\beta,B,z^{*}}(w)=\tanh\left(\alpha w z^{*}+B \right)\alpha\,w-\alpha^{2}w^{2}z^{*},
	\ee
where the dependence of $\alpha$ on $\beta$ has been dropped, and \col{rewriting} \eqref{fixpGRG0} as
	\be
	\label{fixptau2}
	z^{*}=\mathbb{E}\left[h_{\beta,B,z^{*}}(W)\right]+\alpha^{2}z^{*}\expec[W^{2}].
	\ee
\col{Using} integration by parts,
	\begin{align}
	\mathbb{E}\left[h_{\beta,B,z^{*}}(W)\right]= &\int_{0}^{+\infty} h_{\beta,B,z^{*}}(w) dF(w)
	=-\int_{0}^{+\infty} h_{\beta,B,z^{*}}(w) d(1-F(w))\nonumber \\
	= & -\lim_{w\to +\infty} [h_{\beta,B,z^{*}}(w)(1-F(w)) ] \,+\, h_{\beta,B,z^{*}}(0)(1-F(0))  \nonumber \\
	& \,+\int_{0}^{+\infty} h^{\prime}_{\beta,B,z^{*}}(w) (1-F(w))dw.
	\end{align}
The boundary terms in the previous display vanish
%, since $ h_{\beta,B,z^{*}}(0)=0$ Condition \ref{cond-WR-GRG-tail}(ii). 
\col{and therefore}
	\be
	\mathbb{E}\left[h_{\beta,B,z^{*}}(W)\right]=\int_{0}^{+\infty} h^{\prime}_{\beta,B,z^{*}}(w) (1-F(w))dw.
	\ee
Taking into account that the power law of Condition \ref{cond-WR-GRG-tail}(ii) holds for $w>w_{0}$,  we write the previous  integral as
	\be
	\mathbb{E}\left[h_{\beta,B,z^{*}}(W)\right]=\overline{G}(\beta,B,z^{*})+\bar{J}(\beta,B,z^{*}),
	\ee
where
	\be\label{GJbardef}
	\overline{G}(\beta,B,z^{*})
	:=\int_{0}^{w_{0}} h^{\prime}_{\beta,B,z^{*}}(w) (1-F(w))dw,\quad 
	\bar{J}(\beta,B,z^{*}):=\int_{w_{0}}^{+\infty} h^{\prime}_{\beta,B,z^{*}}(w) (1-F(w))dw.
	\ee
Therefore, \eqref{fixptau2} can be rewritten as
	\be
	z^{*}=\overline{G}(\beta,B,z^{*})+\bar{J}(\beta,B,z^{*})+\alpha^{2}z^{*}\expec[W^{2}].
	\ee
Now we take the limit  $B\searrow 0$ in the previous equation. Recalling that  $z^*_0:=\lim_{B\searrow0} z^* >0$, and since the following limits exist:
	\be
	\lim_{B\searrow 0}\overline{G}(\beta,B,z^{*})=G(\beta,z^*_0),\quad 
	\lim_{B\searrow 0}\bar{J}(\beta,B,z^{*})=J(\beta,z^*_0)
	\ee
by bounded convergence, \col{then we arrive to}
	\be\label{fixptau3}
	z_{0}^{*}=G(\beta,z^*_0)+J(\beta,z^*_0)+\alpha^{2}z_{0}^{*}\expec[W^{2}].
	\ee
In the next step we bound  $J(\beta,z^*_0)$. From  the definition of $ \bar{J}(\beta,B,z^{*})$ in \eqref{GJbardef}, and Condition \ref{cond-WR-GRG-tail}(ii),
	\be
	\label{boundJbar}
	c_{\sss W}\int_{w_{0}}^{+\infty} h^{\prime}_{\beta,B,z^{*}}(w) w^{-(\tau-1)}dw
	\le  \bar{J}(\beta,B,z^{*}) \le C_{\sss W}
	\int_{w_{0}}^{+\infty} h^{\prime}_{\beta,B,z^{*}}(w) w^{-(\tau-1)}dw.
	\ee
Applying the change of variable $y=\alpha z^{*} w$ 
%to the integral in the previous display 
leads to
	\be
	\int_{w_{0}}^{+\infty} h^{\prime}_{\beta,B,z^{*}}(w) w^{-(\tau-1)}dw
	=\alpha^{\tau-1} {z^{*}}^{\tau-2}\int_{\alpha w_{0}z^{*}}^{+\infty} 
	\left [ \tanh(y+B) -y\tanh^{2}(y+B)-y \right ]y^{-(\tau-1)} dy.
	\ee
Therefore, denoting
	\be
	\bar{I}(\beta,B,z^{*})
	:= \int_{\alpha w_{0}z^{*}}^{+\infty} \left [ \tanh(y+B) -y\tanh^{2}(y+B)-y \right ]y^{-(\tau-1)} dy,
	\ee
we can rewrite \eqref{boundJbar} as follows:
	\be\label{boundJbar2}
	c_{\sss W} \alpha^{\tau-1} {z^{*}}^{\tau-2} \bar{I}(\beta,B,z^{*})
	\le  \bar{J}(\beta,B,z^{*}) \le C_{\sss W} \alpha^{\tau-1} {z^{*}}^{\tau-2} \bar{I}(\beta,B,z^{*}).
	\ee
Since, again by bounded convergence,
	\be
	\lim_{B\searrow 0} \bar{I}(\beta,B,z^{*})
	= \int_{\alpha w_{0}z_{0}^{*}}^{+\infty} \left [ \tanh(y) -y\tanh^{2}(y)-y \right ]y^{-(\tau-1)} dy 
	=: I(\beta,z_{0}^{*}),
	\ee
we obtain from \eqref{boundJbar} that
	\be\label{fiordiligi}
	c_{\sss W} \alpha^{\tau-1} {z_{0}^{*}}^{\tau-2} {I}(\beta,z_{0}^{*})
	\le  {J}(\beta,z_{0}^{*}) \le C_{\sss W} \alpha^{\tau-1} {z_{0}^{*}}^{\tau-2} {I}(\beta,z_{0}^{*}).
	\ee
On the other hand,  since $\tanh(y) -y\tanh^{2}(y)-y<0$ for $y>0$, \col{we also have}
	\begin{align}\label{boundI}
	k(\tau)&:=\int_{1}^{+\infty} [y\tanh^{2}(y)+y- \tanh(y)] y^{-(\tau-1)} dy\le -I(\beta,z_{0}^{*}) \nonumber \\
	&\le \int_{0}^{+\infty} [y\tanh^{2}(y)+y- \tanh(y)] y^{-(\tau-1)} dy=:K(\tau).
	\end{align}
Therefore, from \eqref{fixptau3}, \eqref{fiordiligi} and \eqref{boundI},
	\be\label{ugo1}
\col{	
        {z_{0}^{*}} \ge 
	%G(\beta,z^*_0) +  c_{W} \alpha^{\tau-1} {z_{0}^{*}}^{\tau-2} {I}(\beta,z_{0}^{*}) +\alpha^{2}z_{0}^{*}\expec[W^{2}]\ge 
	G(\beta,z^*_0) -  c_{W} \alpha^{\tau-1} {z_{0}^{*}}^{\tau-2} K(\tau) 
	+\alpha^{2}z_{0}^{*}\expec[W^{2}]
}	
	\ee
and 
	\be\label{ugo2}
\col{
	{z_{0}^{*}} \le 
	%G(\beta,z^*_0) +  C_{W} \alpha^{\tau-1} {z_{0}^{*}}^{\tau-2} {I}(\beta,z_{0}^{*}) +\alpha^{2}z_{0}^{*}\expec[W^{2}]  \le 
	G(\beta,z^*_0) - C_{W} \alpha^{\tau-1} {z_{0}^{*}}^{\tau-2} k(\tau) 
	+\alpha^{2}z_{0}^{*}\expec[W^{2}].
}	
	\ee
The next step is to control the behaviour of $G(\beta,z^*_0)$ as $\beta\to \beta_{c}$. We start by showing that $G(\beta,z^*_0)$ is $O(z_{0}^{*3})$ as $\beta\to \beta_{c}$.  From the definition of  $G(\beta,z^*_0)$,
	\be
	G(\beta,z^*_0)=\int_{0}^{w_{0}} \left [ -\alpha^{2}z_{0}^{*}w 
	- \tanh^{2}(\alpha z_{0}^{*}w )\alpha^{2}z_{0}^{*}w +\alpha \tanh(\alpha z_{0}^{*}w )\right ](1-F(w))dw.
	\ee
Since the function  between the square brackets is negative for $y>0$ and decreasing, we have
	\begin{align}
	0\ge& G(\beta,z^*_0)
	\ge  [-\alpha^{2} w_{0}z_{0}^{*}-\alpha^{2} w_{0}z_{0}^{*} \tanh^{2}(\alpha w_{0}z_{0}^{*})
	+\alpha \tanh(\alpha w_{0}z_{0}^{*}) ]   \int_{0}^{ w_{0}} (1-F(w)) dw\nonumber  \\
	&\ge   [-\alpha^{2} w_{0}z_{0}^{*}-\alpha^{2} w_{0}z_{0}^{*} \tanh^{2}(\alpha w_{0}z_{0}^{*})
	+\alpha \tanh(\alpha w_{0}z_{0}^{*}) ] =-\frac{4}{3}\alpha^{4}w^{3}z_{0}^{*3}+O(z_{0}^{*5})
	\end{align}
where the last equality is obtained by Taylor expansion. 

%By applying the usual change of variable  $y=\alpha z_{0}^{*} w$, we have
%\begin{align}
%G(\beta,z^*_0)&=\frac{\alpha w_{0}}{ w_{0}\alpha z_{0}^{*}}\int_{0}^{\alpha w_{0}z_{0}^{*} } \left [ -y - y\tanh^{2}(y)+ \tanh(y )\right ](1-F(y/\alpha z_{0}^{*} ))dy\nonumber \\
%&=\alpha w_{0}[-\xi-\xi\tanh^{2}(\xi)+\tanh(\xi) ](1-F(\xi/\alpha z_{0}^{*} ))\\
%&\ge \alpha w_{0}[-\xi-\xi\tanh^{2}(\xi)+\tanh(\xi) ]=\alpha w_{0} O(\xi^{3}),
%\end{align}
%where the equality is obtained by applying the  mean value theorem \todo{check it} with $\xi=\xi(z_{0}^{*})\in (0, \alpha w_{0}z_{0}^{*})$ and the inequality is a consequence of the fact that the function between the square brackets is negative for $y>0$. 
Thus, the previous inequality implies that $G(\beta,z^*_0)=O({z^{*}_{0}}^{3})$. Again, from \eqref{ugo1} and \eqref{ugo2} dividing by $z_{0}^{*}$,
	\be
	1-\alpha^{2}\E[W^{2}]  \ge z^{* \tau-3}_{0}\big (G(\beta,z^*_0)   z^{* 2-\tau}_{0}  
	-  c_{\sss W} \alpha^{\tau-1} K(\tau) \big),
	\ee
and 
	\be
	1-\alpha^{2}\E[W^{2}]  \le z^{* \tau-3}_{0}\big (G(\beta,z^*_0)   z^{* 2-\tau}_{0}  
	-  C_{\sss W} \alpha^{\tau-1} k(\tau) \big).
	\ee
Since $G(\beta,z^*_0)   z^{* 2-\tau}_{0}=O(z^{* 5-\tau}_{0})$ and $\tau\in (3,5)$, the previous inequalities  together with \eqref{poldo} imply  that $z^{* \tau-3}_{0}=O(\beta-\beta_{c})$ as $\beta \searrow \beta_{c}$.  

Next, we consider the 
derivative of $z^{*}_{0}$. Again, taking the limit $B\searrow0$ for $\beta>\betacan$ of \eqref{eq-derivative-z-B} we obtain
	\be\label{leonore2}
	\frac{\partial z^*_0}{\partial B}
	=\frac{\alpha \E[W] -\alpha \E[W \tanh^{2}(\alpha W z^{*}_{0})]}{1-\alpha^{2}\E[W^{2}] 
	+ \alpha^{2} \E[W^{2} \tanh^{2}(\alpha W z^{*}_{0})]}.
	\ee
Since the numerator has a finite positive limit as $\beta \searrow \beta_{c}$ (in particular, the second term is vanishing), we will focus on the denominator
	\be\label{giara}
	D_{2}(\beta):=1-\alpha^{2}\E[W^{2}] + \alpha^{2} \E[W^{2} \tanh^{2}(\alpha W z^{*}_{0})].
	\ee
%\Gib{I'm not able to complete the proof. The following is in attempt}
We start by decomposing the average %\todo{pay attention to $<$ or $\le$!}
	\be\label{uno}
	\E[W^{2} \tanh^{2}(\alpha W z^{*}_{0})]=\E[W^{2} \tanh^{2}(\alpha W z^{*}_{0}) \indic{W\leq w_{0}}]
	+\E[W^{2} \tanh^{2}(\alpha W z^{*}_{0})\indic{W>w_{0}}],
	\ee
and analyze the two terms separately. The first one can be bounded as follows
	\be\label{nessuno}
	0\le \E[W^{2} \tanh^{2}(\alpha W z^{*}_{0}) \indic{W\leq w_{0}}]\le \alpha^{2} w_{0}^{4} z_{0}^{*2}
	\ee
showing that
	\be\label{centomila}
	\E[W^{2} \tanh^{2}(\alpha W z^{*}_{0}) \indic{W\leq w_{0}}]=O(z_{0}^{*2})
	=O((\beta-\beta_{c})^{\frac{2}{\tau-3}}),
	\ee 
with the exponent satisfying $2/(\tau-3)>1$ since $\tau \in (3,5)$. The second term  can be treated  with the integration by parts formula
	\begin{align}\label{naso}
	&\E[W^{2} \tanh^{2}(\alpha W z^{*}_{0})\indic{W>w_{0}}]
	=-\lim_{w\to +\infty}  [w^{2} \tanh^{2}(\alpha w z^{*}_{0})(1-F(w))]\nonumber \\
	&+ w_{0}^{2} \tanh^{2}(\alpha w_{0} z^{*}_{0})(1-F(w_{0}))
	+ \int_{w_{0}}^{+\infty} \frac{\partial}{\partial w} [w^{2} \tanh^{2}(\alpha w z^{*}_{0})](1-F(w))dw .
	\end{align}
Since $\tau>3$, from Condition \ref{cond-WR-GRG-tail} we conclude that the limit in the previous display vanishes. It is also simple to see that
	\be \label{fu}
	w_{0}^{2} \tanh^{2}(\alpha w_{0} z^{*}_{0})(1-F(w_{0}))=O(z_{0}^{*2})=O((\beta-\beta_{c})^{\frac{2}{\tau-3}}).
	\ee
From \eqref{giara} and using \eqref{uno}, 
%\eqref{nessuno}, 
\eqref{centomila}, \eqref{naso}, \eqref{fu}, we can write
	\be\label{guglielmo}
	D_{2}(\beta)=\bar{D}_{2}(\beta)+O((\beta-\beta_{c})^{\frac{2}{\tau-3}}),
	\ee
with
	\be
	\bar{D}_{2}(\beta):= (1-\alpha^{2}\E[W^{2}] )
	+\alpha^{2}  \int_{w_{0}}^{+\infty} \frac{\partial}{\partial w} [w^{2} \tanh^{2}(\alpha w z^{*}_{0})](1-F(w))dw .
	\ee
The second term in the r.h.s.\ of \eqref{guglielmo}  is $O((\beta-\beta_{c})^{s})$ with $s>1$, therefore we can forget it  since the first  \col{term} of   $\bar{D}_{2}(\beta)$ is  $O(\beta-\beta_{c})$.  
Now we focus on the second term of $\bar{D}_{2}(\beta)$. 
% the first  term, which is $O(\beta-\beta_{c})$ and on the second one that we will discuss next.

By  using \eqref{eq-power-law} and applying the change of variable $y=\alpha z^{*} w$, we can bound the integral in the last display as
	\be\label{mattia}
 	\int_{w_{0}}^{+\infty} \frac{\partial}{\partial w} [w^{2} \tanh^{2}(\alpha w z^{*}_{0})](1-F(w))dw
	\le C_{\sss W} \alpha^{\tau-3}z_{0}^{*\tau-3} M(\tau),
	\ee
where
	\be\label{pascal}
	M(\tau):= \int_{0}^{+\infty} \left [2y \tanh^{2}(y)+2y^{2}\tanh(y)(1-\tanh^{2}(y)) 	\right ]y^{-(\tau-1)} dy,
	\ee
and the bound in \eqref{mattia} is obtained thanks to the positivity of the integrand. The convergence of the integral is ensured by the fact that this function is $O(y^{4-\tau})$ close to $y=0$ with $1 > 4-\tau > -1$ and 
is $O(y^{-\tau +2})$ as $y\to \infty$ with $-\tau +2<-1$. In a similar fashion, we can also obtain
	\be\label{vitangelo}
	 \int_{w_{0}}^{+\infty} \frac{\partial}{\partial w} [w^{2} \tanh^{2}(\alpha w z^{*}_{0})](1-F(w))dw\ge c_{\sss W} 	\alpha^{\tau-3}z_{0}^{*\tau-3} m(\tau),
	\ee
with 
	\be\label{moscarda}
	m(\tau):= \int_{\varepsilon}^{+\infty} \left [2y \tanh^{2}(y)+2y^{2}\tanh(y)(1-\tanh^{2}(y)) 	\right ]y^{-(\tau-1)} dy,
	\ee
for $\beta$ sufficiently close to $\beta_{c}$. At this stage $\varepsilon>0$ is an arbitrary  fixed quantity that will be chosen later (but independently of $\beta$).  
By  \eqref{ugo1} and \eqref{ugo2},
	\be
	\frac{G(\beta,z_{0}^{*}){ z_{0}^{*}}^{-1} -  (1-\alpha^{2}\E[W^{2}] )}{c_{\sss W} \alpha^{\tau-1} K(\tau)}
	\le z_{0}^{*\tau-3}\le \frac{G(\beta,z_{0}^{*}){ z_{0}^{*}}^{-1} 
	-  (1-\alpha^{2}\E[W^{2}] )}{C_{\sss W} \alpha^{\tau-1} k(\tau)},
	\ee
which,  substituted in  \eqref{mattia} and \eqref{vitangelo}, gives
	\begin{align}
	{G(\beta,z_{0}^{*}){ z_{0}^{*}}^{-1}} \frac{m(\tau)}{K(\tau)}-{(1-\alpha^{2}\E[W^{2}] )} \frac{m(\tau)}{K(\tau)}
	&\le \alpha^{2}  \int_{w_{0}}^{+\infty} \frac{\partial}{\partial w} [w^{2} \tanh^{2}(\alpha w z^{*}_{0})](1-F(w))dw \nonumber \\
	&\le G(\beta,z_{0}^{*}){ z_{0}^{*}}^{-1}\frac{M(\tau)}{k(\tau)}-{(1-\alpha^{2}\E[W^{2}] )} \frac{M(\tau)}{k(\tau)}.
	\end{align}
By definition of $\bar{D}_{2}(\beta)$,
%\begin{align}
	\be\label{bound-D}
	{G(\beta,z_{0}^{*}){ z_{0}^{*}}^{-1}} \frac{m(\tau)}{K(\tau)}+ (1-\alpha^{2}\E[W^{2}] ) \left( 1- \frac{m(\tau)}{K(\tau)}\right)
	\le \bar{D}_{2}(\beta)\\
	\le  {G(\beta,z_{0}^{*}){ z_{0}^{*}}^{-1}} \frac{M(\tau)}{k(\tau)}
	+ (1-\alpha^{2}\E[W^{2}] ) \left( 1- \frac{M(\tau)}{k(\tau)}\right).\nn
	\ee
In the last step of the proof, we show that  $\frac{m(\tau)}{K(\tau)}>1$.  This can be done by properly choosing the arbitrary quantity $\varepsilon$ in  \eqref{moscarda}. We will prove the first inequality, the second one can be obtained in the same way.   Starting  from \eqref{boundI} and \eqref{moscarda},  we introduce the functions $K_{b}(\tau)$ and $m_{a}(\tau)$ for $a\ge 0,b\ge 0$ as
	\be
	K_{b}(\tau):= \int_{b}^{+\infty}\frac{d}{dy}  \left [y^{2} - y\tanh (y) \right ] y^{-(\tau-1)} dy,\qquad  
	m_{a}(\tau) :=  \int_{a}^{+\infty}  \frac{d}{dy} \left [y^{2} \tanh^{2} (y) \right ] y^{-(\tau-1)} dy,
	\ee
which coincide with $K(\tau)$ and $m(\tau)$ for $b=0$ and $a=\varepsilon$, respectively. By applying  the integration by parts formula the two functions  can be written as
 	\begin{align}
	&K_{b}(\tau)= -b^{3-\tau} + b^{2-\tau} \tanh(b) +(\tau-1)\int_{b}^{+\infty} (y^{2}-y\tanh(y)) y^{-\tau}dy,\label{humpty}\\
	&  m_{a}(\tau) =-a\tanh(a)+(\tau-1) \int_{a}^{+\infty} y^{2}\tanh^{2}(y) y^{-\tau} dy.\label{dumpty}
 	\end{align}
Since 
	\be
	\lim_{a\to 0^{+}\atop b\to 0^{+}} \frac{m_{a}(\tau)}{K_{b}(\tau)}=\frac{m_{0}(\tau)}{K(\tau)}
	=\frac{\int_{0}^{+\infty} y^{2}\tanh^{2}(y)\, y^{-\tau} dy}{\int_{0}^{+\infty}  \left [y^{2}-y\tanh(y) \right] \, y^{-\tau}  dy} >1,
	\ee
where the inequality can be proved by observing that $y^{2}\tanh^{2}(y) > y^{2}-y\tanh(y)$ for all $y>0$,  \col{then} for any $\varepsilon>0$ sufficiently small,
	\be
	\frac{m(\tau)}{K(\tau)}>1.
	\ee
Since $G(\beta,z_{0}^{*}){ z_{0}^{*}}^{-1}=O(z_{0}^{*2})=O((\beta-\beta_{c})^{{s}})$ with $s=\frac{2}{\tau-3}>1$ and $ (1-\alpha^{2}\E[W^{2}] )=O(\beta-\beta_{c})$, with $1-\alpha^{2}\E[W^{2}]<0$ for $\beta>\beta_{c}$ and close to $\beta_{c}$ (see \eqref{poldo}),
 we conclude that $0<\bar{D}_{2}(\beta)=O(\beta-\beta_{c})$, for the same values of $\beta$. This proves that
 	\be\label{pepe}
	0<\frac{\partial z^*_0}{\partial B} = O((\beta-\beta_c)^{-1}).
	\ee
%since \eqref{poldo} implies that the factor  $(1-\alpha^{2}\E[W^{2}] )$ in  \eqref{bound-D} is negative for $\beta>\beta_{c}$ and close to $\beta_{c}$.
The previous equation together with  ${z^{*}_0}^{\tau-3}=O(\beta-\beta_{c})$ allows us to conclude the proof along the same lines of the case with $\E[W^{4}]<\infty$. Indeed, the upper bound \eqref{ubound_chi} is still valid in the present case, since only the first moment of $W$ is involved.  For the lower bound we argue as follows. Since, $1-\tanh^{2}(x)>1-\tanh(x)>1-x$ for $x>0$, we have
	\begin{align}\label{pizarro}
	\wdt{\chi}(\beta,0)  \, 
	&\geq \, \mathbb{E}\left[\left(1+\ab W\frac{\partial z_0^*}{\partial B}\right)\left( 1- \ab W z_0^{*}\right) \right]\nn \\
	& = 1- \sqrt{\sinh(\beta)\expec[W]} z^{*}_{0}
	+\sqrt{\sinh(\beta)\expec[W]}  \frac{\partial z_0^*}{\partial B} - \sinh(\beta) \nu z^{*}_{0}  \frac{\partial z_0^*}{\partial B} \nn \\
	& = 1- \sqrt{\sinh(\beta)\expec[W]} O((\beta-\beta_c)^{1/(\tau-3)})  
	+ \sqrt{\sinh(\beta)\expec[W]}  O((\beta-\beta_c)^{-1})\nn \\
	&  -  \sinh(\beta) \nu   O((\beta-\beta_c)^{1/(\tau-3)})   O((\beta-\beta_c)^{-1}) .
	\end{align}
The inequalities \eqref{ubound_chi} and \eqref{pizarro} imply \eqref{limchi} concluding the proof of the theorem.
\end{proof}

\section{Non-classical limit theorems at criticality: proof of Theorem \ref{thm-nonclassicalclt} }\label{sec-NC-CLT}
In this section we prove Theorem~\ref{thm-nonclassicalclt}. For this, we follow the strategy of the proof for the Curie-Weiss model 
(\col{see e.g. 
%Ellis in ~
\cite[Theorem~V.9.5]{El}}). 
%As observed there, it 
\col{It} suffices to prove that for any real number $r$
	\be
	\lim_{N\rightarrow\infty} P_{\sss N}
	\left( \exp\left(r \frac{S_{\sss N}}{N^{\boldsymbol{\delta}/(\boldsymbol{\delta}+1)}}\right) \right) 
	= \frac{\int_{-\infty}^{\infty} \exp\left(rz - f(z)\right)\dint z}{\int_{-\infty}^{\infty} \exp\left(- f(z)\right)\dint z}.
	\ee

As observed in~\cite{GGvdHP}, the measure $P_{\sss N}$ is approximately equal to the inhomogeneous Curie-Weiss measure
	\be\label{eq-ptildemeasure}
	\widetilde{P}_{\sss N}(\col{g})  
	=
	\frac{1}{\widetilde{Z}_{\sss N}}\sum_{\sigma \in \Omega_{\sss N}}\col{g}(\sigma)\e^{\frac{1}{2}\sinh \beta
	\sum_{i,j \in[N]}\frac{w_i w_j}{\ell_{\sss N}}\sigma_i \sigma_j}
	=\frac{1}{\widetilde{Z}_{\sss N}}  \sum_{\sigma \in \Omega_{\sss N}} \col{g}(\sigma)
	\e^{\frac{1}{2}\frac{\sinh \beta}{\ell_{\sss N}}\left(\sum_{i\in[N]}w_i \sigma_i\right)^2},
	\ee
where $\col{g}(\sigma)$ is any bounded function defined in $\Omega_{\sss N}$ and $\widetilde{Z}_{\sss N}$ is the associated normalization factor, i.e.,
	\be\label{eq-Ztildedef}
	\widetilde{Z}_{\sss N} = \sum_{\sigma \in \Omega_{\sss N}} 
	\e^{\frac{1}{2}\frac{\sinh \beta}{\ell_{\sss N}}\left(\sum_{i\in[N]}w_i \sigma_i\right)^2}.
	\ee
We first prove the theorem for this measure $\widetilde{P}_{\sss N}$, which is the rank-1 inhomogeneous Curie-Weiss model with $\beta$ replaced with $\sinh(\beta)$. 

For this, we use the Hubbard-Stratonovich identity to rewrite $\widetilde{P}_{\sss N}\Big( \exp\Big(r \frac{S_{\sss N}}{N^{\lambda}}\Big)\Big)$ as a fraction of two integrals of an exponential function in Lemma~\ref{pippo} \change{in Section \ref{sec-rewrite-MGF}}. We next split the analysis into the cases $\expec[W^4]<\infty$ and $\tau\in(3,5)$ in Sections~\ref{sec-convergence-EW4finite} and~\ref{sec-convergence-tau35}, respectively. For both these cases we analyze the exponents in the integrals and use Taylor expansions to show that they converge in Lemmas~\ref{pluto} and~\ref{lem-plutotau35}, respectively. We then use dominated convergence to show that the integrals also converge in Lemmas~\ref{lem-integralconvE4} and~\ref{lem-integralconvtau35}, respectively. The tail behavior of $f(x)$ for $\tau\in(3,5)$ is analyzed in Lemma~\ref{lem-largeztau35}. 
\col{Combining these results we conclude the proof of Theorem \ref{thm-nonclassicalclt} in Section \ref{final-proof}: we first prove the theorem for 
 $\widetilde{P}_{\sss N}$ and then we show that the theorem also holds for $P_{\sss N}$ in Lemma~\ref{lem-fromtildetoreal}.
%Combining these results proves the theorem for $\widetilde{P}_{\sss N}$ and we conclude the proof of Theorem \ref{thm-nonclassicalclt} by showing that from this it follows that the theorem also holds for $P_{\sss N}$ in Lemma~\ref{lem-fromtildetoreal}. 
Finally,} in Section~\ref{sec-scaling-window}, we discuss how to \change{adapt} the proof to obtain the results on the scaling window.

%%%%%%%%%%%%%%%%%%%%%%%%%%%%%%%%%%%%%%%%%%%%%%%%%%%%%%%%%%%%%%%%%%%%%%%%%%%%%%%%%%%%%%%%%%%%%%
%%%%%%%%%%%%%%%%%%%%%%%%%%%%%%%%%%%%%%%%%%%%%%%%%%%%%%%%%%%%%%%%%%%%%%%%%%%%%%%%%%%%%%%%%%%%%%

\subsection{Rewrite of the moment generating function}\label{sec-rewrite-MGF}
To ease the notation we first rescale $S_{\sss N}$ by $N^\lambda$ and later set $\lambda = \boldsymbol{\delta}/(\boldsymbol{\delta}+1)$. We rewrite $\widetilde{P}_{\sss N}\left( \exp\left(r \frac{S_{\sss N}}{N^{\lambda}}\right)\right)$ in the following lemma:
\begin{lemma}[Moment generating function of $S_{\sss N}/N^{\lambda}$]
\label{pippo}
For $B=0$,
	\be
	\widetilde{P}_{\sss N}\Big( \exp\Big(r \frac{S_{\sss N}}{N^{\lambda}}\Big)\Big)
	= \frac{\int_{-\infty}^{\infty} \e^{-N G_{\sss N}(z;r)}\dint z}{\int_{-\infty}^{\infty} \e^{-N G_{\sss N}(z;0)}\dint z},
	\ee
where
	\be \label{eq-defGNzr}
	G_{\sss N}(z;r) = \frac12 z^2 - \expec\left[ \log \cosh \left( \alpha_{\sss N}(\beta)W_{\sss N} z 
	+ \frac{r}{N^{\lambda}}\right) \right],
	\ee
with
	\be
	\alpha_{\sss N}(\beta) = \sqrt{\frac{\sinh \beta}{\expec[W_{\sss N}]}}.
	\ee 
\end{lemma}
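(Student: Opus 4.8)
The plan is to prove the identity by the Hubbard--Stratonovich (Gaussian linearization) trick, which converts the quadratic spin interaction into a linear one at the cost of an auxiliary Gaussian integral, after which the spin sum factorizes over sites. Writing $T_{\sss N} = \sum_{i\in[N]} w_i\sigma_i$, the Boltzmann weight appearing in \eqref{eq-ptildemeasure} is $\exp\big(\tfrac{\sinh\beta}{2\ell_{\sss N}}T_{\sss N}^2\big)$, and the elementary identity $\e^{b^2/2} = (2\pi)^{-1/2}\int_{\mathbb{R}}\e^{-x^2/2 + bx}\dint x$, applied with $b = \sqrt{\sinh\beta/\ell_{\sss N}}\,T_{\sss N}$, linearizes it exactly.

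First I would apply this identity both to the numerator $\sum_\sigma \e^{r S_{\sss N}/N^{\lambda}}\,\e^{\frac{\sinh\beta}{2\ell_{\sss N}}T_{\sss N}^2}$ and to the normalization $\widetilde{Z}_{\sss N}$ in \eqref{eq-Ztildedef}. Then I would rescale the integration variable by $x = z\sqrt{N}$; since $\ell_{\sss N} = N\expec[W_{\sss N}]$, this turns the Gaussian weight into $\e^{-Nz^2/2}$, while the linear coupling $\sqrt{\sinh\beta/\ell_{\sss N}}\,T_{\sss N}\,x$ becomes precisely $\alpha_{\sss N}(\beta)\,T_{\sss N}\,z = \sum_{i}\alpha_{\sss N}(\beta)w_i z\,\sigma_i$, which is exactly where the factor $\alpha_{\sss N}(\beta) = \sqrt{\sinh\beta/\expec[W_{\sss N}]}$ in the statement comes from.

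Next I would interchange the (finite) spin sum with the $z$-integral and factorize over sites. Incorporating the field-like term $\tfrac{r}{N^{\lambda}}\sigma_i$ coming from the moment generating function, the sum over $\sigma\in\{-1,1\}^N$ collapses to $\prod_{i}2\cosh\big(\alpha_{\sss N}(\beta)w_i z + r/N^{\lambda}\big)$. Taking logarithms and recognizing $\tfrac1N\sum_i \log\cosh(\cdots) = \expec\big[\log\cosh(\alpha_{\sss N}(\beta)W_{\sss N}z + r/N^{\lambda})\big]$ (the expectation being over the uniform choice $W_{\sss N}=w_{U_{\sss N}}$) assembles the integrand into $2^N\e^{-NG_{\sss N}(z;r)}$ with $G_{\sss N}$ exactly as in \eqref{eq-defGNzr}; the denominator is the same computation with $r=0$.

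Finally, forming the ratio, the common prefactors $(N/2\pi)^{1/2}\,2^N$ cancel, yielding the claimed formula. There is no genuine obstacle here: the computation is exact and finite-dimensional. The only points deserving a word of care are tracking the normalization constants (which cancel) and observing that the representing integrals converge, since $\log\cosh$ grows only linearly, so $G_{\sss N}(z;r)\sim z^2/2$ as $|z|\to\infty$ and the integrand decays like $\e^{-Nz^2/2}$.
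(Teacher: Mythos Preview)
Your proposal is correct and follows essentially the same approach as the paper: Hubbard--Stratonovich linearization of the quadratic term, factorization of the spin sum into a product of $\cosh$'s, the rescaling $x=z\sqrt{N}$ (using $\ell_{\sss N}=N\expec[W_{\sss N}]$) to produce $\alpha_{\sss N}(\beta)$, and recognition of the site average as $\expec[\log\cosh(\cdot)]$ with respect to $W_{\sss N}$. The only cosmetic difference is that the paper writes the Gaussian identity as $\e^{t^2/2}=\expec[\e^{tZ}]$ with $Z$ standard normal rather than as an explicit integral, which is of course the same thing.
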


\begin{proof}
We use the Hubbard-Stratonovich identity, i.e., we write $\e^{t^2/2} = \mathbb{E} \left[\e^{tZ}\right]$, with $Z$ standard Gaussian, to obtain
	\begin{align}
	\widetilde{Z}_{\sss N} \widetilde{P}_{\sss N}\Big( \exp\Big(r \frac{S_{\sss N}}{N^{\lambda}}\Big)\Big) 
	&=  \sum_{\sigma \in \Omega_{\sss N}} \e^{\frac{r}{N^{\lambda}}\sum_{i\in[N]} \sigma_i}
	\e^{\frac{1}{2}\frac{\sinh \beta}{\ell_{\sss N}}\left(\sum_{i\in[N]}w_i \sigma_i\right)^2} \nn\\
	&=  \sum_{\sigma \in \Omega_{\sss N}} \e^{\frac{r}{N^{\lambda}}\sum_{i\in[N]} \sigma_i} \expec\Big[
	\e^{\sqrt{\frac{\sinh \beta}{\ell_{\sss N}}}\left(\sum_{i\in[N]}w_i \sigma_i\right)Z}\Big]\\
	&= 2^N \expec\Big[\prod_{i\in[N]} \cosh\Big(\sqrt{\frac{\sinh \beta}{\ell_{\sss N}}}w_i Z+\frac{r}{N^\lambda}\Big)\Big] 
	= 2^N \expec\Big[\e^{\sum_{i\in[N]} \log\cosh\Big(\sqrt{\frac{\sinh \beta}{\ell_{\sss N}}}w_i Z+\frac{r}{N^\lambda}\Big)}\Big].		\nn
	\end{align}
We rewrite the sum in the exponential, using the fact that $\col{W_{\sss N}=w_{U_{\sss N}}}$, where $\col{U_{\sss N}}$ is a uniformly chosen vertex in $[N]$, as
	\begin{align}
	\widetilde{Z}_{\sss N} \widetilde{P}_{\sss N}\Big( \exp\Big(r \frac{S_{\sss N}}{N^{\lambda}}\Big)\Big) 
	&= 2^N \expec\Big[\exp{\Big\{N \mathbb{E}\ \Big[\log \cosh\Big(\sqrt{\frac{\sinh \beta}
	{N \mathbb{E}[W_{\sss N}]}}W_{\sss N}Z + \frac{r}{N^\lambda}\Big)
	\mid\, Z \,\Big]\Big\}}\Big] \nn\\
	&=\frac{2^N }{\sqrt{2\pi}}\int_{-\infty}^{\infty} \e^{-z^2/2}
	\exp{\Big\{N \mathbb{E}\ \Big[\log \cosh\Big(\alpha_{\sss N}(\beta)W_{\sss N}\frac{z}{\sqrt{N}} 
	+ \frac{r}{N^\lambda}\Big)\Big]\Big\}}\dint z.	
	\end{align}
By substituting $z/\sqrt{N}$ for $z$, we get
	\begin{align}\label{eq-substitutesqrtN}
	\widetilde{Z}_{\sss N} \widetilde{P}_{\sss N}\Big( \exp\Big(r \frac{S_{\sss N}}{N^{\lambda}}\Big)\Big) 
	&= 2^N\sqrt{\frac{N  }{2\pi}}\int_{-\infty}^{\infty} \e^{-Nz^2/2}
	\exp{\Big\{N \mathbb{E}\ \Big[\log \cosh\Big(\alpha_{\sss N}(\beta)W_{\sss N}z 
	+ \frac{r}{N^\lambda}\Big)\Big]\Big\}}\dint z \nn\\
	&= 2^N\sqrt{\frac{N}{2\pi}}\int_{-\infty}^{\infty} \e^{-N G_{\sss N}(z;r)}\dint z.
	\end{align}
In a similar way we can rewrite
	\be
	\widetilde{Z}_{\sss N} = 2^N\sqrt{\frac{N}{2\pi}}\int_{-\infty}^{\infty} \e^{-N G_{\sss N}(z;0)}\dint z,
	\ee
so that the lemma follows.
\end{proof}

%%%%%%%%%%%%%%%%%%%%%%%%%%%%%%%%%%%%%%%%%%%%%%%%%%%%%%%%%%%%%%%%%%%%%%%%%%%%%%%%%%%%%%%%%%%%%%
%%%%%%%%%%%%%%%%%%%%%%%%%%%%%%%%%%%%%%%%%%%%%%%%%%%%%%%%%%%%%%%%%%%%%%%%%%%%%%%%%%%%%%%%%%%%%%

\subsection{Convergence for $\expec[W^4]<\infty$}\label{sec-convergence-EW4finite}
We analyze the asymptotics of the function $G_{\sss N}(z;r)$:
\begin{lemma}[Asymptotics of $G_{\sss N}$ for ${\expec[W^4]}<\infty$]
\label{pluto}
For $\beta=\beta_{c,N}$, $B=0$ and $\expec[W^4]<\infty$,
	\be
	\lim_{N\rightarrow\infty} N G_{\sss N}(z/N^{1/4};r) 
	= -z r \sqrt{\frac{\expec[W]}{\nu}}+\frac{1}{12}\frac{\expec[W^4]}{\expec[W^2]^2}z^4.
	\ee
\end{lemma}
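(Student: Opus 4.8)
The plan is to insert the rescaling $z\mapsto z/N^{1/4}$ directly into the definition \eqref{eq-defGNzr} and expand everything in powers of $N$; note that $\expec[W^4]<\infty$ forces $\boldsymbol{\delta}=3$ (Theorem~\ref{thm-CritExp}), so $\lambda=\boldsymbol{\delta}/(\boldsymbol{\delta}+1)=3/4$. Writing $u_{\sss N}:=\alpha_{\sss N}W_{\sss N}zN^{-1/4}+rN^{-3/4}$, where $\alpha_{\sss N}=\alpha_{\sss N}(\beta_{c,N})$, we have
\be
N G_{\sss N}(z/N^{1/4};r)=\tfrac12 z^2N^{1/2}-N\,\expec\big[\log\cosh(u_{\sss N})\big].
\ee
The crucial structural point is that at $\beta=\beta_{c,N}=\asinh(1/\nu_{\sss N})$ one has $\sinh(\beta_{c,N})=1/\nu_{\sss N}$, whence $\alpha_{\sss N}^2=1/(\nu_{\sss N}\expec[W_{\sss N}])=1/\expec[W_{\sss N}^2]$, i.e. $\alpha_{\sss N}^2\expec[W_{\sss N}^2]=1$ \emph{exactly}. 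This identity is precisely what will cancel the divergent $N^{1/2}$ contribution.

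Next I would use the Taylor expansion $\log\cosh(x)=\tfrac12 x^2-\tfrac1{12}x^4+g(x)$, with $g(x):=\log\cosh(x)-\tfrac12 x^2+\tfrac1{12}x^4$, and compute the low moments of $u_{\sss N}$. Since $\expec[u_{\sss N}^2]=\alpha_{\sss N}^2 z^2\expec[W_{\sss N}^2]N^{-1/2}+2\alpha_{\sss N}zr\expec[W_{\sss N}]N^{-1}+r^2N^{-3/2}$ and $\expec[u_{\sss N}^4]=\alpha_{\sss N}^4 z^4\expec[W_{\sss N}^4]N^{-1}+O(N^{-3/2})$ (the remaining terms involve only $\expec[W_{\sss N}^k]$ with $k\le 3$, all bounded), multiplying by $N$ gives
\be
N\expec\big[\tfrac12 u_{\sss N}^2-\tfrac1{12}u_{\sss N}^4\big]=\tfrac12\alpha_{\sss N}^2\expec[W_{\sss N}^2]z^2N^{1/2}+\alpha_{\sss N}zr\expec[W_{\sss N}]-\tfrac1{12}\alpha_{\sss N}^4\expec[W_{\sss N}^4]z^4+O(N^{-1/2}).
\ee
Using $\alpha_{\sss N}^2\expec[W_{\sss N}^2]=1$, the two $N^{1/2}$ terms cancel against $\tfrac12 z^2N^{1/2}$, leaving $-\alpha_{\sss N}zr\expec[W_{\sss N}]+\tfrac1{12}\alpha_{\sss N}^4\expec[W_{\sss N}^4]z^4-N\expec[g(u_{\sss N})]+o(1)$. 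Feeding in $\alpha_{\sss N}^2\to 1/\expec[W^2]$, $\expec[W_{\sss N}]\to\expec[W]$ and $\expec[W_{\sss N}^4]\to\expec[W^4]$ (Condition~\ref{cond-det}(i)), and noting $\expec[W]/\sqrt{\expec[W^2]}=\sqrt{\expec[W]/\nu}$ since $\nu=\expec[W^2]/\expec[W]$, yields exactly the two stated terms, \emph{provided} the remainder $N\expec[g(u_{\sss N})]\to 0$.

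The main obstacle is precisely this control of $N\expec[g(u_{\sss N})]$: only the fourth moment of $W_{\sss N}$ converges, whereas a naive bound $|g(x)|\le Cx^6$ would produce an uncontrolled $\expec[W_{\sss N}^6]$. I would resolve this using the two elementary global bounds $|g(x)|\le C_6 x^6$ and $|g(x)|\le C_4 x^4$, both valid for all $x$ because $g(x)/x^6$ and $g(x)/x^4$ are bounded (from $g(x)=O(x^6)$ near $0$ and $\log\cosh(x)=|x|-\log 2+o(1)$, hence $g(x)\sim x^4/12$, at infinity), combined with a truncation at level $\delta_{\sss N}N^{1/4}$ where $\delta_{\sss N}\to 0$ and $\delta_{\sss N}N^{1/4}\to\infty$ (e.g. $\delta_{\sss N}=N^{-1/8}$). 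On $\{W_{\sss N}\le\delta_{\sss N}N^{1/4}\}$ the sextic bound gives a contribution $\lesssim N\cdot N^{-3/2}\expec[W_{\sss N}^6\indic{W_{\sss N}\le\delta_{\sss N}N^{1/4}}]\le\delta_{\sss N}^2\expec[W_{\sss N}^4]\to 0$; on $\{W_{\sss N}>\delta_{\sss N}N^{1/4}\}$ the quartic bound gives $\lesssim N\cdot N^{-1}\expec[W_{\sss N}^4\indic{W_{\sss N}>\delta_{\sss N}N^{1/4}}]=\expec[W_{\sss N}^4\indic{W_{\sss N}>\delta_{\sss N}N^{1/4}}]\to 0$, the last limit following from the uniform integrability of $\{W_{\sss N}^4\}_{N}$, itself guaranteed by $W_{\sss N}\convd W$ together with $\expec[W_{\sss N}^4]\to\expec[W^4]<\infty$. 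The contributions of the $rN^{-3/4}$ part to these bounds are uniformly smaller and negligible. This establishes $N\expec[g(u_{\sss N})]\to 0$ and completes the proof.
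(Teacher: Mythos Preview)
Your proof is correct and follows essentially the same route as the paper: Taylor expand $\log\cosh$ to fourth order, use the exact identity $\alpha_{\sss N}^2\expec[W_{\sss N}^2]=1$ at $\beta=\beta_{c,N}$ to kill the $N^{1/2}$ term, and show the remainder is $o(1/N)$. The only difference is in how the remainder is controlled: the paper bounds $\expec[W_{\sss N}^6]\le(\max_i w_i)^2\expec[W_{\sss N}^4]$ and uses $\max_i w_i=o(N^{1/4})$ (itself a consequence of the uniform integrability of $W_{\sss N}^4$), whereas you truncate at $\delta_{\sss N}N^{1/4}$ and invoke uniform integrability directly; both arguments are equivalent in spirit and strength.
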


\begin{proof}
Taylor expanding $\log\cosh(x)$ about $x=0$ gives that
	\be
	\label{expand}
	\log \cosh(x) = \frac{x^2}{2} -\frac{1}{12} x^4 + O(x^6).
	\ee
We want to use this to analyze $ N G_{\sss N}(z/N^{1/4};r)$ and hence need to analyze the second, fourth and sixth moment of $\sqrt{\frac{\sinh \beta_{c,N}}{\mathbb{E}[W_{\sss N}]}}W_{\sss N}\frac{z}{N^{1/4}} + \frac{r}{N^\lambda}$.

The second moment equals, using that $\lambda=\boldsymbol{\delta}/(\boldsymbol{\delta}+1)=3/4$,
	\begin{align}\label{eq-2ndmomentargG}
	\mathbb{E}\Big[\Big(\alpha_{\sss N}(\beta_{c,N})W_{\sss N}\frac{z}{N^{1/4}} + \frac{r}{N^\lambda}\Big)^2\Big] 
	&= \sinh \beta_{c,N} \nu_{\sss N} \frac{z^2}{\sqrt{N}} 
	+ 2\sqrt{\sinh \beta_{c,N}\mathbb{E}[W_{\sss N}]}\frac{z r}{N} + \frac{r^2}{N^{6/4}} \nn\\
	&= \frac{z^2}{\sqrt{N}} +2\frac{z r}{N} \sqrt{\frac{\expec[W_{\sss N}]}{\nu_{\sss N}}}+o(1/N),
	\end{align}
where we have used that $\sinh \beta_{c,N}=1/\nu_{\sss N}$ in the second equality.

For the fourth moment we use that by assumption the first four moments of $W_{\sss N}$ are $O(1)$. Hence, for all $r$,
	\begin{align}
	\mathbb{E}\Big[\Big(\alpha_{\sss N}(\beta_{c,N})W_{\sss N}\frac{z}{N^{1/4}} + \frac{r}{N^\lambda}\Big)^4\Big]
	&=\frac{\sinh^2\beta_{c,N}}{\expec[W_{\sss N}]^2}\expec[W_{\sss N}^4]\frac{z^4}{N}
	+O\Big(\frac{1}{N^{3/4+\lambda}}+\frac{1}{N^{2/4+2\lambda}}+\frac{1}{N^{1/4+3\lambda}}+\frac{1}{N^{4\lambda}}\Big)\nn\\
	&=\frac{\expec[W_{\sss N}^4]}{\expec[W_{\sss N}^2]^2}\frac{z^4}{N}+o(1/N).
	\end{align}

For the sixth moment, we have to be a bit more careful since $\expec[W^6]$ is potentially infinite. We can, however, use that
	\be
	\mathbb{E}[W_{\sss N}^6] = \frac{1}{N} \sum_{i=1}^N w_i^6 
	\leq (\max_{i=1}^N w_i)^2 \frac{1}{N}\sum_{i=1}^N w_i^4 = (\max_i w_i)^2 \mathbb{E}[W_{\sss N}^4].
	\ee
It can easily be seen that $\max_{i=1}^N w_i = o(N^{1/4})$ when $W_{\sss N}\stackrel{{\mathcal D}}{\longrightarrow} W$ and $\expec[W_{\sss N}^4]\to \expec[W^4]<\infty$. Hence,
	\be
	\mathbb{E}\Big[\Big(\alpha_{\sss N}(\beta_{c,N})W_{\sss N}\frac{z}{N^{1/4}}\Big)^6\Big] 
	= \frac{\sinh^3\beta_{c,N}}{\expec[W_{\sss N}]^3}\expec[W_{\sss N}^6]\frac{z^6}{N^{6/4}} 
	= \frac{o(N^{1/2})\expec[W_{\sss N}^4]}{\expec[W_{\sss N}^2]^3} \frac{z^6}{N^{6/4}} = o(1/N).
	\ee
In a similar way, it can be shown that
	\be
	\mathbb{E}\Big[\Big(\alpha_{\sss N}(\beta_{c,N})W_{\sss N}\frac{z}{N^{1/4}} + \frac{r}{N^\lambda}\Big)^6\Big] = o(1/N).
	\ee
Putting everything together and using that the first four moments of $W_{\sss N}$ converge by assumption,
	\begin{align}
	\label{mulp-N-GN-conv}
	\lim_{N\rightarrow\infty} N G_{\sss N}(z/N^{1/4};r) &= \lim_{N\rightarrow\infty} 
	\left(\frac{\sqrt{N}}{2} z^2 - N\expec\left[ \log \cosh \Big(\alpha_{\sss N}(\beta_{c,N})W_{\sss N}\frac{z}{N^{1/4}}
	+ \frac{r}{N^{\lambda}}\Big) \right]\right)\nn\\
	& =-z r \sqrt{\frac{\expec[W]}{\nu}}+\frac{1}{12}\frac{\expec[W^4]}{\expec[W^2]^2}z^4.
	\end{align}
\end{proof}
\medskip

From Lemma \ref{pluto} it also follows that the integral converges as we show next:

\begin{lemma}[Convergence of the integral for ${\expec[W^4]}<\infty$]\label{lem-integralconvE4}
For $\beta=\beta_{c,N}$, $B=0$ and $\expec[W^4]<\infty$,
	\be
	\lim_{N\rightarrow\infty}\int_{-\infty}^\infty \e^{- N G_{\sss N}(z/N^{1/4};r)} \dint z 
	= \int_{-\infty}^\infty \e^{z r \frac{\expec[W]}{\sqrt{\expec[W^2]}}-\frac{1}{12}\frac{\expec[W^4]}{\expec[W^2]^2}z^4} \dint z.
	\ee
\end{lemma}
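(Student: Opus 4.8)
The plan is to upgrade the pointwise convergence of the integrand, already established in Lemma~\ref{pluto}, to convergence of the integral by a dominated-convergence argument. Writing $H_{\sss N}(z) := N G_{\sss N}(z/N^{1/4};r)$ and recalling that $\sqrt{\expec[W]/\nu}=\expec[W]/\sqrt{\expec[W^2]}$, Lemma~\ref{pluto} gives, for every fixed $z$, that $\e^{-H_{\sss N}(z)}$ converges to $\e^{zr\expec[W]/\sqrt{\expec[W^2]}-\frac{1}{12}\frac{\expec[W^4]}{\expec[W^2]^2}z^4}=:\e^{-H(z)}$, which is precisely the integrand on the right-hand side. It therefore suffices to justify interchanging the limit and the integral. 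The obstruction is that the rescaling concentrates the relevant mass on the growing window $|z|\lesssim N^{1/4}$ (on which the argument $\alpha_{\sss N}(\beta_{c,N})W_{\sss N}z/N^{1/4}$ stays bounded), so producing a single $N$-independent dominating function requires a lower bound on $H_{\sss N}$ that is uniform in $N$.

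The main tool is an exact rewriting. Setting $g(x):=\tfrac12 x^2-\log\cosh x$, which is nonnegative and convex, and $Y_{\sss N}:=\alpha_{\sss N}(\beta_{c,N})W_{\sss N}z/N^{1/4}+r/N^{3/4}$, the identity $\log\cosh=\tfrac12(\cdot)^2-g$ together with the exact second-moment computation in the proof of Lemma~\ref{pluto} (which uses $\sinh\beta_{c,N}\,\nu_{\sss N}=1$, i.e.\ criticality) yields
\be
H_{\sss N}(z)=-\sqrt{\tfrac{\expec[W_{\sss N}]}{\nu_{\sss N}}}\,zr-\frac{r^2}{2\sqrt N}+N\,\expec[g(Y_{\sss N})].
\ee
The decisive point is that at $\beta_{c,N}$ the leading quadratic terms $\tfrac{\sqrt N}{2}z^2$ and $\tfrac N2\expec[Y_{\sss N}^2]$ cancel, leaving only the manifestly nonnegative term $N\expec[g(Y_{\sss N})]$ and a term that is linear in $z$ plus an $o(1)$. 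Hence everything reduces to a uniform lower bound on $N\expec[g(Y_{\sss N})]$, for which I would use the elementary inequality $g(x)\ge c_0\min(x^2,x^4)$, valid for some $c_0>0$ and all $x$.

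I would then split $\bbR$ into three regimes. On the inner window $|z|\le\delta N^{1/4}$, choosing $\delta$ small and truncating at a fixed level $M$ with $\expec[W^4\indic{W\le M}]>0$ (possible since $0<\expec[W^4]<\infty$) forces $|Y_{\sss N}|\le 1$ on $\{W_{\sss N}\le M\}$, whence $N\expec[g(Y_{\sss N})]\ge c_0\,\alpha_{\sss N}(\beta_{c,N})^4 z^4\,\expec[W_{\sss N}^4\indic{W_{\sss N}\le M}]\ge c_1 z^4$; thus $\e^{-H_{\sss N}(z)}\indic{|z|\le\delta N^{1/4}}$ is dominated by the integrable function $\e^{C|z|-c_1 z^4}$, and since $\delta N^{1/4}\to\infty$, dominated convergence gives $\int_{|z|\le\delta N^{1/4}}\e^{-H_{\sss N}(z)}\dint z\to\int_{\bbR}\e^{-H(z)}\dint z$. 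On the intermediate band $\delta N^{1/4}\le|z|\le KN^{1/4}$, fixing $w_*>0$ with $\pr(W>w_*)>0$ (possible since $\expec[W]>0$) forces $|Y_{\sss N}|\ge\alpha_{\sss N}(\beta_{c,N})w_*\delta=:\eta>0$ on $\{W_{\sss N}>w_*\}$, so the amplification by the factor $N$ gives $N\expec[g(Y_{\sss N})]\ge c_0\min(\eta^2,\eta^4)\,N\,\pr(W_{\sss N}>w_*)\ge c_2 N$, and therefore $\int_{\delta N^{1/4}\le|z|\le KN^{1/4}}\e^{-H_{\sss N}(z)}\dint z\le 2KN^{1/4}\e^{-c_2 N}\to0$. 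On the outer region $|z|\ge KN^{1/4}$, the cruder bound $\log\cosh y\le|y|$ yields $H_{\sss N}(z)\ge\tfrac{\sqrt N}{2}z^2-CN^{3/4}|z|-|r|N^{1/4}\ge\tfrac14 z^2$ once $K$ and $N$ are large, so this contribution is at most $\int_{|z|\ge KN^{1/4}}\e^{-z^2/4}\dint z\to0$. Summing the three regimes gives the claim. The main obstacle is precisely the intermediate band, namely ruling out that mass escapes to scales $z\sim N^{1/4}$; this is handled by observing that a fixed positive value of $g(Y_{\sss N})$ on a positive-probability event, amplified by the prefactor $N$, makes $H_{\sss N}$ of order $N$ there, while the inner window requires the finite-fourth-moment truncation and the outer region only the quadratic growth of $g$.
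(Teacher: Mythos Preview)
Your proof is correct and follows the same overall scheme as the paper's (dominated convergence, using the convex function $g(x)=\tfrac12 x^2-\log\cosh x$ and the critical cancellation $\sinh\beta_{c,N}\,\nu_{\sss N}=1$), but the way you extract the lower bound on $N\expec[g(Y_{\sss N})]$ is genuinely different. The paper first splits off the $r$-dependence, writing $\log\cosh(a+r/N^{3/4})-\log\cosh(a)$ as an error term controlled by $|\tanh(\xi)|\le|\xi|$, and then applies Jensen's inequality to the convex function $g$ to obtain $\expec[g(\alpha_{\sss N}W_{\sss N}z/N^{1/4})]\ge g(\alpha_{\sss N}\expec[W_{\sss N}]z/N^{1/4})$; this reduces the problem to a single deterministic argument, to which the two-piece bound $g(x)\ge d(x)$ is applied, yielding only two integration regions. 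You instead keep $r$ inside $Y_{\sss N}$, avoid Jensen entirely, and get the lower bound by truncating $W_{\sss N}$ (above by $M$ on the inner window, below by $w_*$ on the intermediate band), which costs you a third region. The paper's Jensen step is slicker and avoids the case analysis on $W_{\sss N}$; your approach is more hands-on but arguably more transparent about where the mass sits. One small point to tighten in a full write-up: in the inner window you pass from $g(Y_{\sss N})\ge c_0 Y_{\sss N}^4$ to a bound of the form $c_1 z^4$, which requires absorbing the shift $r/N^{3/4}$ into the $z$-term; this is harmless once $|z|\ge 1$ (and $|z|\le 1$ is trivially controlled), but deserves a line.
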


\begin{proof}
We prove this lemma using dominated convergence. Hence, we need to find a lower bound on $N G_{\sss N}(z/N^{1/4};r)$. We first rewrite this function by using that
	\be
	\expec\left[ \frac12 \alpha_{\sss N}(\beta_{c,N})^2 W_{\sss N}^2\left(\frac{z}{N^{1/4}}\right)^2 \right] 
	= \frac{1}{2} \left(\frac{z}{N^{1/4}}\right)^2.
	\ee
Hence,
	\begin{align}\label{eq-rewriteGN}
	G_{\sss N}(z/N^{1/4};r)
	& = \expec\left[ \frac12 \alpha_{\sss N}(\beta_{c,N})^2 W_{\sss N}^2\left(\frac{z}{N^{1/4}}\right)^2 
	-\log \cosh \Big(\alpha_{\sss N}(\beta_{c,N})W_{\sss N}\frac{z}{N^{1/4}}+ \frac{r}{N^{\lambda}}\Big) \right] \nn\\
	&= \expec\left[ \frac12 \left(\alpha_{\sss N}(\beta_{c,N}) W_{\sss N}\frac{z}{N^{1/4}}\right)^2 
	-\log \cosh \Big(\alpha_{\sss N}(\beta_{c,N})W_{\sss N}\frac{z}{N^{1/4}}\Big) \right] \\
	&\qquad- \expec\left[ \log \cosh \Big(\alpha_{\sss N}(\beta_{c,N})W_{\sss N}\frac{z}{N^{1/4}}
	+ \frac{r}{N^{\lambda}}\Big) -\log \cosh \Big(\alpha_{\sss N}(\beta_{c,N})W_{\sss N}\frac{z}{N^{1/4}}\Big)  \right]\nn
	\end{align}
Since
	\be
	\frac{\dint^2}{\dint x^2} (\frac12 x^2 - \log \cosh x ) = 1- (1-\tanh^2(x)) = \tanh^2(x)\geq 0,
	\ee
the function $\frac12 x^2 - \log \cosh x$ is convex and we can use Jensen's inequality to bound
	\begin{align}
	\expec\Big[ \frac12 &\left(\alpha_{\sss N}(\beta_{c,N}) W_{\sss N}\frac{z}{N^{1/4}}\right)^2 
	-\log \cosh \Big(\alpha_{\sss N}(\beta_{c,N})W_{\sss N}\frac{z}{N^{1/4}}\Big) \Big]  \nn\\
	&\geq \frac12 \left(\alpha_{\sss N}(\beta_{c,N})\expec[ W_{\sss N}]\frac{z}{N^{1/4}}\right)^2 
	-\log \cosh \Big(\alpha_{\sss N}(\beta_{c,N})\expec[W_{\sss N}]\frac{z}{N^{1/4}}\Big) \nn\\
	&=\frac{1}2\Big(\sqrt{\frac{\expec[ W_{\sss N}]}{\nu_{\sss N}}}\frac{z}{N^{1/4}}\Big)^2 
	-\log \cosh \Big(\sqrt{\frac{\expec[ W_{\sss N}]}{\nu_{\sss N}}}\frac{z}{N^{1/4}}\Big). 
	\end{align}
As observed in the proof of~\cite[Theorem~V.9.5]{El}, there exist positive constants $A$ and $\varepsilon$ so that
	\be\label{eq-taylorh24}
	\frac12 x^2 - \log \cosh x \geq d(x) := \left\{ \begin{array}{ll} \varepsilon x^4, 
	& \quad {\rm for\ }|x|\leq A,\\ \varepsilon x^2,&\quad {\rm for\ }|x|> A. \\  \end{array}\right.
	\ee
To bound the second term in~\eqref{eq-rewriteGN}, we can use the Taylor expansion
	\be
	\log\cosh(a+x) = \log\cosh(a) + \tanh(\xi)x,
	\ee
for some $\xi\in(a,a+x)$, and that $|\tanh(\xi)|\leq |\xi | \leq |a|+|x|$ to obtain
	\begin{align}
	\expec\Big[ \log \cosh &\Big(\alpha_{\sss N}(\beta_{c,N})W_{\sss N}\frac{z}{N^{1/4}}+ \frac{r}{N^{\lambda}}\Big) 
	-\log \cosh \Big(\alpha_{\sss N}(\beta_{c,N})W_{\sss N}\frac{z}{N^{1/4}}\Big)  \Big] \nn\\
	&\leq \expec\Big[\Big| \log \cosh \Big(\alpha_{\sss N}(\beta_{c,N})W_{\sss N}\frac{z}{N^{1/4}}+ \frac{r}{N^{\lambda}}\Big) 
	-\log \cosh \Big(\alpha_{\sss N}(\beta_{c,N})W_{\sss N}\frac{z}{N^{1/4}}\Big) \Big| \Big] \nn\\
	& \leq \expec\Big[\Big(\Big|\alpha_{\sss N}(\beta_{c,N})W_{\sss N}\frac{z}{N^{1/4}}\Big|
	+\frac{|r|}{N^{\lambda}}\Big)  \frac{|r|}{N^{\lambda}}\Big] 
	= \alpha_{\sss N}(\beta_{c,N}) \mathbb{E}[W_{\sss N}] \frac{|zr|}{N^{1/4+\lambda}}+ \frac{r^2}{N^{2\lambda}}. \nn\\
	&= \sqrt{\frac{\expec[ W_{\sss N}]}{\nu_{\sss N}}} \frac{|zr|}{N} + \frac{r^2}{N^{3/2}}.
	\end{align}
Hence,
	\be
	\e^{- N G_{\sss N}(z/N^{1/4};r)} 
	\leq \exp\Big\{\sqrt{\frac{\expec[ W_{\sss N}]}{\nu_{\sss N}}} |zr| + \frac{r^2}{N^{1/2}} 
	- N d\Big(\sqrt{\frac{\expec[ W_{\sss N}]}{\nu_{\sss N}}}\frac{z}{N^{1/4}}\Big)\Big\},
	\ee
which we use as the dominating function. Hence, we need to prove that the integral of this function over $z\in\mathbb{R}$ is uniformly bounded. We split the integral as
	\begin{align}
	\int_{-\infty}^{\infty} \exp\Big\{&\sqrt{\frac{\expec[ W_{\sss N}]}{\nu_{\sss N}}} |zr| + \frac{r^2}{N^{1/2}} 
	- N d	\Big(\sqrt{\frac{\expec[ W_{\sss N}]}{\nu_{\sss N}}}\frac{z}{N^{1/4}}\Big)\Big\} \dint z  \nn\\
	&=\int_{\left|\sqrt{\frac{\expec[ W_{\sss N}]}{\nu_{\sss N}}}\frac{z}{N^{1/4}}\right|\leq A} 
	\exp\Big\{\sqrt{\frac{\expec[ W_{\sss N}]}{\nu_{\sss N}}} |zr| + \frac{r^2}{N^{1/2}} 
	- N d\Big(\sqrt{\frac{\expec[ W_{\sss N}]}{\nu_{\sss N}}}\frac{z}{N^{1/4}}\Big)\Big\} \dint z \nn\\
	&\qquad+ \int_{\left|\sqrt{\frac{\expec[ W_{\sss N}]}{\nu_{\sss N}}}\frac{z}{N^{1/4}}\right|>A} 
	\exp\Big\{\sqrt{\frac{\expec[ W_{\sss N}]}{\nu_{\sss N}}} |zr| + \frac{r^2}{N^{1/2}} 
	- N d\Big(\sqrt{\frac{\expec[ W_{\sss N}]}{\nu_{\sss N}}}\frac{z}{N^{1/4}}\Big)\Big\} \dint z \;.
	\end{align}
The first integral equals
	\be
	\int_{\left|\sqrt{\frac{\expec[ W_{\sss N}]}{\nu_{\sss N}}}\frac{z}{N^{1/4}}\right|\leq A} 
	\exp\Big\{\sqrt{\frac{\expec[ W_{\sss N}]}{\nu_{\sss N}}} |zr| + \frac{r^2}{N^{1/2}} 
	- \varepsilon \frac{\expec[ W_{\sss N}]^4}{\mathbb{E}[W_{\sss N}^2]^2}z^4\Big)\Big\} \dint z,
	\ee
which clearly is uniformly bounded. The second integral equals
	\begin{align}
	\int_{\left|\sqrt{\frac{\expec[ W_{\sss N}]}{\nu_{\sss N}}}\frac{z}{N^{1/4}}\right|>A} 
	& \exp\Big\{\sqrt{\frac{\expec[ W_{\sss N}]}{\nu_{\sss N}}} |zr| + \frac{r^2}{N^{1/2}} 
	- \varepsilon \frac{\expec[ W_{\sss N}]}{\nu_{\sss N}}z^2 \sqrt{N}\Big)\Big\} \dint z \\
	&=\frac{1}{N^{1/4}}\int_{\left|\sqrt{\frac{\expec[ W_{\sss N}]}{\nu_{\sss N}}}\frac{y}{N^{1/2}}\right|>A} 
	 \exp\Big\{\sqrt{\frac{\expec[ W_{\sss N}]}{\nu_{\sss N}}} \frac{|yr|}{N^{1/4}} + \frac{r^2}{N^{1/2}} 
	 - \varepsilon \frac{\expec[ W_{\sss N}]}{\nu_{\sss N}}y^2 \Big)\Big\} \dint y,\nn
	\end{align}
where we have substituted $y=z N^{1/4}$. This converges to zero for $N\rightarrow \infty$, because the integral is uniformly bounded.

Together with the pointwise convergence proved in Lemma~\ref{pluto}, this proves Lemma \ref{lem-integralconvE4}.
\end{proof}

%%%%%%%%%%%%%%%%%%%%%%%%%%%%%%%%%%%%%%%%%%%%%%%%%%%%%%%%%%%%%%%%%%%%%%%%%%%%%%%%%%%%%%%%%%%%%%
%%%%%%%%%%%%%%%%%%%%%%%%%%%%%%%%%%%%%%%%%%%%%%%%%%%%%%%%%%%%%%%%%%%%%%%%%%%%%%%%%%%%%%%%%%%%%%

\subsection{Convergence for $\tau\in(3,5)$}\label{sec-convergence-tau35}
We next analyze $G_{\sss N}(z;r)$ for $\tau\in(3,5)$, assuming Condition~\ref{cond-det}. 

\begin{lemma}[Asymptotics of $G_{\sss N}$ for $\tau\in(3,5)$]\label{lem-plutotau35}
Assume that Condition~\ref{cond-det}(ii) holds. For $\beta=\beta_{c,N}$, $B=0$ and $\tau\in(3,5)$,
	\be
	\lim_{N\rightarrow\infty} N G_{\sss N}(z/N^{1/(\tau-1)};r) 
	= -z r \sqrt{\frac{\expec[W]}{\nu}}+f\left(\sqrt{\frac{\expec[W]}{\nu}}z\right),
	\ee
where $f(z)$ is defined in~\eqref{eq-limitingdensity}.
\end{lemma}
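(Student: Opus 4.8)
The plan is to mirror the structure of Lemmas~\ref{pluto} and~\ref{lem-integralconvE4}, exploiting the simplification that the deterministic sequence in Condition~\ref{cond-det}(ii) produces at this scaling. Writing $\zeta = z/N^{1/(\tau-1)}$ and abbreviating $\alpha_{\sss N} := \alpha_{\sss N}(\beta_{c,N})$, I would first use the criticality identity $\alpha_{\sss N}^2\expec[W_{\sss N}^2] = \sinh(\beta_{c,N})\nu_{\sss N} = 1$ to write $\tfrac12\zeta^2 = \expec[\tfrac12\alpha_{\sss N}^2 W_{\sss N}^2\zeta^2]$ and hence split
\[
N G_{\sss N}(\zeta;r) = \underbrace{N\,\expec\!\left[\tfrac12(\alpha_{\sss N}W_{\sss N}\zeta)^2 - \log\cosh(\alpha_{\sss N}W_{\sss N}\zeta)\right]}_{=:A_N} - \underbrace{N\,\expec\!\left[\log\cosh\!\big(\alpha_{\sss N}W_{\sss N}\zeta + \tfrac{r}{N^\lambda}\big) - \log\cosh(\alpha_{\sss N}W_{\sss N}\zeta)\right]}_{=:B_N},
\]
with $\lambda = \boldsymbol{\delta}/(\boldsymbol{\delta}+1) = (\tau-2)/(\tau-1)$. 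Since $\expec[g(W_{\sss N})] = \frac1N\sum_{i=1}^N g(w_i)$, each $N\expec[\cdot]$ is a plain sum over $i$, and plugging in $w_i = c_w(N/i)^{1/(\tau-1)}$ yields the crucial cancellation $\alpha_{\sss N} w_i\zeta = \alpha_{\sss N}c_w z\, i^{-1/(\tau-1)}$: the argument no longer depends on $N$ except through the converging prefactor $\alpha_{\sss N}\to 1/\sqrt{\expec[W^2]}$. In particular, all arguments lie in the bounded interval $[0,\alpha_{\sss N}c_w|z|]$.

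For the main term $A_N = \sum_{i=1}^N g(\alpha_{\sss N}c_w z\, i^{-1/(\tau-1)})$, with $g(x) = \tfrac12 x^2 - \log\cosh x \geq 0$, I would argue by dominated convergence for series. Each summand converges to $g(\alpha c_w z\, i^{-1/(\tau-1)})$, and since the arguments are bounded one has $g(x) = O(x^4)$, giving the domination $g(\alpha_{\sss N}c_w z\, i^{-1/(\tau-1)}) \leq C\, i^{-4/(\tau-1)}$ with $\sum_i i^{-4/(\tau-1)}<\infty$ precisely because $\tau<5$. Thus $A_N\to\sum_{i\geq1} g(\alpha c_w z\, i^{-1/(\tau-1)})$. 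It then remains to identify $\alpha c_w = \frac{\tau-2}{\tau-1}\sqrt{\expec[W]/\nu}$: evaluating the Riemann sum $\expec[W_{\sss N}] = c_w N^{1/(\tau-1)-1}\sum_i i^{-1/(\tau-1)}\to c_w\frac{\tau-1}{\tau-2}$ shows $c_w = \frac{\tau-2}{\tau-1}\expec[W]$, and combined with $\alpha = 1/\sqrt{\expec[W^2]}$ and $\sqrt{\expec[W]/\nu} = \expec[W]/\sqrt{\expec[W^2]}$ this yields exactly $A_N\to f(\sqrt{\expec[W]/\nu}\,z)$.

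For $B_N$, a first-order Taylor expansion gives $\log\cosh(a+h)-\log\cosh(a) = \tanh(a)h + O(h^2)$ with $h = r/N^\lambda$, so $B_N = N^{1-\lambda}r\,\expec[\tanh(\alpha_{\sss N}W_{\sss N}\zeta)] + N^{1-2\lambda}O(r^2)$. The error is negligible because $N^{1-2\lambda} = N^{(3-\tau)/(\tau-1)}\to0$, while $N^{1-\lambda} = N^{1/(\tau-1)}$. It remains to show $N^{1/(\tau-1)}\expec[\tanh(\alpha_{\sss N}W_{\sss N}\zeta)]\to z\sqrt{\expec[W]/\nu}$. Writing this as $N^{1/(\tau-1)-1}\sum_i\tanh(\alpha_{\sss N}c_w z\, i^{-1/(\tau-1)})$, I would replace $\tanh$ by its argument at the cost of an error $N^{1/(\tau-1)-1}\sum_i O(i^{-3/(\tau-1)})$ that vanishes in all cases $\tau\in(3,5)$ (checking the subcases $\tau\lessgtr 4$), and recognize the remaining Riemann sum $\frac1N\sum_i(i/N)^{-1/(\tau-1)}\to\int_0^1 u^{-1/(\tau-1)}\dint u = \frac{\tau-1}{\tau-2}$. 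This gives the limit $\alpha c_w z\,\frac{\tau-1}{\tau-2} = z\sqrt{\expec[W]/\nu}$, hence $B_N\to z r\sqrt{\expec[W]/\nu}$ and $N G_{\sss N}(\zeta;r)\to f(\sqrt{\expec[W]/\nu}\,z) - zr\sqrt{\expec[W]/\nu}$, as claimed.

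The main obstacle, and the step requiring the most care, is the control of the $\tanh$ nonlinearity in $B_N$ together with the near-$u=0$ (small-$i$, heavy-weight) part of the Riemann sum: the few terms with $i=O(1)$ have arguments of order one rather than small, but they contribute only $O(N^{1/(\tau-1)-1})=o(1)$ and so are harmless, while the integrability of $u^{-1/(\tau-1)}$ at $0$ (valid since $1/(\tau-1)<1$) is what keeps the limit finite. The boundedness of the arguments — a direct consequence of the exact matching between the maximal weight $w_1\sim N^{1/(\tau-1)}$ and the scaling $\zeta\sim N^{-1/(\tau-1)}$ — is the structural fact that legitimizes both the $O(x^4)$ domination in $A_N$ and the $O(x^3)$ nonlinearity bound in $B_N$, and I would flag this explicitly.
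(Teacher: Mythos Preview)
Your proof is correct and follows essentially the same approach as the paper: the same decomposition $NG_{\sss N}=A_N-B_N$ via the criticality identity, the same dominated-convergence argument for $A_N$ using the $O(x^4)$ bound on $g(x)=\tfrac12 x^2-\log\cosh x$ (summable because $4/(\tau-1)>1$), and the same identification of $c_w$ via the Riemann sum for $\expec[W_{\sss N}]$. The one minor technical difference is in $B_N$: you expand $\tanh(a)=a+O(a^3)$ and then split cases on whether $\sum_i i^{-3/(\tau-1)}$ converges, whereas the paper expands $\tanh(a)=a-\tanh\zeta(1-\tanh^2\zeta)a^2$ so that the remainder is controlled directly by $\expec[W_{\sss N}^2]<\infty$ without any case distinction on $\tau$; this is slightly cleaner but both routes are valid and equally elementary.
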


\begin{proof}
Define the function
	\be\label{eq-defgwz}
	g(w,z) = \frac{1}{2}\left( \alpha_{\sss N}(\beta_{c,N})w z\right)^2-\log \cosh \left( \alpha_{\sss N}(\beta_{c,N})w z\right),
	\ee
so that we can rewrite, in a similar way as in~\eqref{eq-rewriteGN},
	\begin{align}\label{eq-rewriteGNtau35}
	N G_{\sss N}\left(z/N^{\frac{1}{\tau-1}};r \right) 
	&= N \expec[g(W_{\sss N}, z/N^{\frac{1}{\tau-1}})] \\
	&\quad- N \expec\left[ \log \cosh \Big(\frac{1}{\sqrt{\mathbb{E}[W_{\sss N}^2]}}W_{\sss N}\frac{z}{N^{\frac{1}{\tau-1}}}
	+ \frac{r}{N^{\lambda}}\Big) 
	-\log \cosh \Big(\frac{1}{\sqrt{\mathbb{E}[W_{\sss N}^2]}}W_{\sss N}\frac{z}{N^{\frac{1}{\tau-1}}}\Big)  \right].\nn
	\end{align}

By the definition of $W_{\sss N}$, we can rewrite
	\be
	\expec[g(W_{\sss N},z/N^{1/(\tau-1)})] = \frac1N\sum_{i=1}^N g(w_i,z/N^{1/(\tau-1)}).
	\ee
With the deterministic choice of the weights as in~\eqref{eq-precisepowerlaw},
	\begin{align}
	g(w_i,z/N^{1/(\tau-1)})&= \frac{1}{2}\left( \alpha_{\sss N}(\beta_{c,N})\frac{w_i z}{N^{1/(\tau-1)}}\right)^2 
	-\log \cosh \left( \alpha_{\sss N}(\beta_{c,N})\frac{w_i z}{N^{1/(\tau-1)}} \right)\nn\\
	&=\frac{1}{2}\left( \frac{1}{\sqrt{\expec[W_{\sss N}^2]}} \frac{c_w z}{i^{1/(\tau-1)}}  \right)^2 
	-\log \cosh \left( \frac{1}{\sqrt{\expec[W_{\sss N}^2]}}\frac{c_w z}{i^{1/(\tau-1)}} \right).
	\end{align}
From this it clearly follows that, for all $i\geq1$,
	\be
	\lim_{N\rightarrow\infty} g(w_i,z/N^{1/(\tau-1)}) 
	= \frac{1}{2}\left( \frac{1}{\sqrt{\expec[W^2]}} \frac{c_w z}{i^{1/(\tau-1)}}\right)^2 
	-\log \cosh \left( \frac{1}{\sqrt{\expec[W^2]}}\frac{c_w z}{i^{1/(\tau-1)}} \right).
	\ee
It remains to show that also the sum converges, which we do using dominated convergence. For this, we use a Taylor expansion of $\log\cosh(x)$ about $x=0$ up to the fourth order
	\be\label{eq-lbtaylorlogcosh}
	\log \cosh(x) = \frac{x^2}{2} + \left(-2+2\tanh^2(\xi)+6\tanh^2(\xi)(1-\tanh^2(\xi))\right)\frac{x^4}{4!} 
	\geq \frac{x^2}{2}-\frac{x^4}{12},
	\ee
for some $\xi\in(0,x)$. Hence,
	\be
	g(w_i,z/N^{1/(\tau-1)}) 
	\leq \frac{1}{12} \left( \frac{1}{\sqrt{\expec[W_{\sss N}^2]}}\frac{c_w z}{i^{1/(\tau-1)}} \right)^4
	= \frac{1}{12}  \frac{1}{\expec[W^2]^2+o(1)} \frac{(c_w z)^4}{i^{4/(\tau-1)}} .
	\ee
Since $\tau\in(3,5)$, it holds that $4/(\tau-1)>1$, so that
	\be
	\lim_{N\rightarrow\infty} \sum_{i=1}^N \frac{1}{12} \frac{1}{\expec[W^2]^2+o(1)} \frac{(c_w z)^4}{i^{4/(\tau-1)}} < \infty.
	\ee
We conclude that
	\begin{align}
	\lim_{N\rightarrow\infty} \sum_{i=1}^N g(w_i,z/N^{1/(\tau-1)}) 
	&= \sum_{i=1}^\infty \left(\frac{1}{2}\left( \frac{1}{\sqrt{\expec[W^2]}} \frac{c_w z}{i^{1/(\tau-1)}}\right)^2 
	-\log \cosh \left( \frac{1}{\sqrt{\expec[W^2]}}\frac{c_w z}{i^{1/(\tau-1)}} \right)\right)\nn\\
	&= f\left(\sqrt{\frac{\expec[W]}{\nu}}z\right),
	\end{align}
where in the last equality we have used that $\expec[W]= c_w \frac{\tau-1}{\tau-2}$.
This is in turn a consequence of the following explicit computation giving an upper and lower bound on $\expec[W_{\sss N}]$ matching in the limit $N\to\infty$. An upper bound on the first moment is given by
	\begin{align}
	\expec[W_{\sss N}] &= \frac{1}{N}\sum_{i=1}^N c_w\left(\frac{N}{i}\right)^{1/(\tau-1)} 
	=c_w N^{-\frac{\tau-2}{\tau-1}}\sum_{i=1}^N c_w i^{-1/(\tau-1)} 
	\leq c_w N^{-\frac{\tau-2}{\tau-1}} \left(1+\int_{1}^N i^{-1/(\tau-1)} \dint i\right)\nn\\
	& = c_w \frac{\tau-1}{\tau-2} - c_w\frac{1}{\tau-2} N^{-\frac{\tau-2}{\tau-1}},
	\end{align}
and a lower bound by
	\be
	\expec[W_{\sss N}] \geq c_w N^{-\frac{\tau-2}{\tau-1}} \int_{1}^N i^{-1/(\tau-1)} \dint i 
	= c_w \frac{\tau-1}{\tau-2} - c_w\frac{\tau-1}{\tau-2} N^{-\frac{\tau-2}{\tau-1}}.
	\ee
From this it indeed follows that
	\be
	\expec[W]=\lim_{N\rightarrow\infty} \expec[W_{\sss N}] = c_w \frac{\tau-1}{\tau-2}.
	\ee

To analyze the second term in~\eqref{eq-rewriteGNtau35}, we can use the Taylor expansions
	\begin{align}
	\log \cosh(a+x) &= \log \cosh(a) + \tanh(a) x + (1-\tanh^2(\xi)) x^2 \nn\\
	&= \log \cosh(a) + (a-\tanh\zeta(1-\tanh^2\zeta)a^2) x + (1-\tanh^2(\xi)) x^2,
	\end{align}
for some $\xi\in(a,a+x)$ and $\zeta\in(0,a)$. This gives
	\begin{align}
	N \expec\Big[ \log \cosh & \Big(\frac{1}{\sqrt{\mathbb{E}[W_{\sss N}^2]}}W_{\sss N}\frac{z}{N^{\frac{1}{\tau-1}}}
	+ \frac{r}{N^{\lambda}}\Big) 
	-\log \cosh \Big(\frac{1}{\sqrt{\mathbb{E}[W_{\sss N}^2]}}W_{\sss N}\frac{z}{N^{\frac{1}{\tau-1}}}\Big)  \Big] \nn\\
	&=N \sqrt{\frac{\expec[W_{\sss N}]}{\nu_{\sss N}}}\frac{z}{N^{\frac{1}{\tau-1}}}\frac{r}{N^{\lambda}} 
	- N\expec\Big[\tanh\zeta(1-\tanh^2\zeta) W_{\sss N}^2 \Big]
	\frac{1}{\mathbb{E}[W_{\sss N}^2]}\frac{z^2}{N^{\frac{2}{\tau-1}}}\frac{r}{N^{\lambda}} \nn\\
	&\qquad\ \qquad +N \expec\Big[ (1-\tanh^2(\xi)) \Big]\frac{r^2}{N^{2\lambda}} \nn\\
	&= \sqrt{\frac{\expec[W_{\sss N}]}{\nu_{\sss N}}} zr + o(1),
	\end{align}
where the last equality follows from $\lambda=\frac{\tau-2}{\tau-1}$ and $\tau\in(3,5)$.
\end{proof}
\medskip

Again it follows that also the integral converges:
\begin{lemma}[Convergence of the integral for $\tau\in(3,5)$]
\label{lem-integralconvtau35}
For $\beta=\beta_{c,N}$, $B=0$ and $\tau\in(3,5)$,
	\be
	\lim_{N\rightarrow\infty}\int_{-\infty}^\infty \e^{- N G_{\sss N}(z/N^{1/(\tau-1)};r)} \dint z 
	= \int_{-\infty}^\infty \e^{z r \sqrt{\frac{\expec[W]}{\nu}}-f\left(\sqrt{\frac{\expec[W]}{\nu}}z\right)} \dint z.
	\ee
\end{lemma}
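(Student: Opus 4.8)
The plan is to prove this by dominated convergence, mirroring the structure of Lemma~\ref{lem-integralconvE4} but replacing the Jensen step, which is too lossy in this regime. The pointwise convergence of the integrand is already supplied by Lemma~\ref{lem-plutotau35}, so it remains to exhibit a uniform integrable upper bound for $\e^{-NG_{\sss N}(z/N^{1/(\tau-1)};r)}$. Following the rewrite~\eqref{eq-rewriteGNtau35} I would split
\[
NG_{\sss N}(z/N^{1/(\tau-1)};r) = \sum_{i=1}^N g(w_i, z/N^{1/(\tau-1)}) - R_{\sss N}(z;r),
\]
where $g$ is the nonnegative function in~\eqref{eq-defgwz} (the main part) and $R_{\sss N}$ collects the $r$-dependent difference of $\log\cosh$ terms.

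For the correction $R_{\sss N}$, I would use that $\log\cosh$ satisfies $|\log\cosh(a+x)-\log\cosh(a)|\le(|a|+|x|)|x|$, with $a=\alpha_{\sss N}(\beta_{c,N})W_{\sss N}z/N^{1/(\tau-1)}$ and $x=r/N^{\lambda}$. After multiplying by $N$ this gives $|R_{\sss N}(z;r)|\le \sqrt{\expec[W_{\sss N}]/\nu_{\sss N}}\,|zr|+o(1)$ uniformly in $z$, the cancellation of powers of $N$ being exactly the identity $\tfrac{1}{\tau-1}+\lambda=1$ already exploited in Lemma~\ref{lem-plutotau35}. The decisive step is the lower bound on the main part. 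Here one cannot imitate the finite-fourth-moment argument: applying Jensen to $\tfrac12 x^2-\log\cosh x$ would produce $N\,d\big(\sqrt{\expec[W_{\sss N}]/\nu_{\sss N}}\,z/N^{1/(\tau-1)}\big)$, which, since $\expec[W^4]=\infty$ forces $4/(\tau-1)>1$, decays like $N^{1-4/(\tau-1)}\to 0$ and is therefore useless. Instead I would exploit the explicit power-law weights~\eqref{eq-precisepowerlaw}: the $i$-th summand is $g$ evaluated at $u_i=c_w z/(\sqrt{\expec[W_{\sss N}^2]}\,i^{1/(\tau-1)})$, and I would apply the elementary bound~\eqref{eq-taylorh24}. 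Splitting the sum at the crossover index $i^{\ast}\asymp|z|^{\tau-1}$, below which $|u_i|>A$, the terms with $|u_i|>A$ alone give
\[
\sum_{i\le i^{\ast}} g(w_i,z/N^{1/(\tau-1)})\ge \varepsilon\sum_{i\le i^{\ast}}u_i^2\asymp |z|^2\,(i^{\ast})^{1-2/(\tau-1)}\asymp |z|^{\tau-1},
\]
using that $2/(\tau-1)<1$ for $\tau\in(3,5)$ together with the standard estimate $\sum_{i\le M}i^{-p}\asymp M^{1-p}$. This produces the uniform dominating bound $\e^{-NG_{\sss N}(z/N^{1/(\tau-1)};r)}\le \e^{\,c_0|zr|-c_1|z|^{\tau-1}+o(1)}$, which is integrable since $\tau-1>1$.

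The \textbf{main obstacle} is precisely this uniform lower bound, and in particular the large-$z$ regime in which the crossover index $i^{\ast}$ exceeds $N$, so that \emph{every} summand has $|u_i|>A$ and the single power-law estimate above degrades. There one instead has $\sum_{i=1}^N g\ge \varepsilon|z|^2 N^{(\tau-3)/(\tau-1)}$, quadratic in $z$ with a coefficient diverging in $N$, which I would control separately by splitting the integral into the regions $\{|u_N|\le A\}$ and $\{|u_N|>A\}$, exactly as in the two-region split of Lemma~\ref{lem-integralconvE4}: on the first region the bound $\e^{c_0|zr|-c_1|z|^{\tau-1}}$ applies, while on the second, after the substitution $z=yN^{1/(\tau-1)}$ and using $2/(\tau-1)+(\tau-3)/(\tau-1)=1$, the integral is seen to tend to $0$ as $N\to\infty$. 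Combining the dominating bound on the bulk with the vanishing of the tail, and invoking the pointwise limit of Lemma~\ref{lem-plutotau35} (whose limiting integrand is itself integrable by the tail estimate $f(x)\asymp x^{\tau-1}$ of Lemma~\ref{lem-largeztau35}), yields the asserted convergence of the integrals.
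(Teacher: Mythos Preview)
Your proposal is correct and follows essentially the same route as the paper: both arguments avoid the Jensen step (which, as you correctly diagnose, is too weak here), exploit the explicit power-law weights to write the main part as $\sum_i g(w_i,z/N^{1/(\tau-1)})$, and bound this sum from below using the two-branch inequality~\eqref{eq-taylorh24}, before concluding by dominated convergence together with the pointwise limit of Lemma~\ref{lem-plutotau35}.

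The differences are tactical. The paper first replaces the sum by the integral $\int_1^N$ (monotonicity of the summands) and keeps both branches of~\eqref{eq-taylorh24}, arriving at a polynomial lower bound whose dominant term for large $|z|$ is of higher order than needed; you instead split the sum at the crossover index $i^\ast\asymp|z|^{\tau-1}$ and retain only the $\varepsilon x^2$ contributions, which already yields the sharp rate $c_1|z|^{\tau-1}$ matching the tail of $f$ in Lemma~\ref{lem-largeztau35}. Your explicit treatment of the regime $i^\ast>N$ (equivalently $|u_N|>A$), via the substitution $z=yN^{1/(\tau-1)}$ and the identity $2/(\tau-1)+(\tau-3)/(\tau-1)=1$, is in fact more careful than the paper, which tacitly assumes the crossover lies in $[1,N]$; this extra care is warranted and the conclusion is the same.
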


\begin{proof}
We again start from the rewrite of $G_{\sss N}$ in~\eqref{eq-rewriteGN}. As before,
	\be
	N \expec[g(W_{\sss N}, z/N^{\frac{1}{\tau-1}})] 
	= \sum_{i=1}^N \Big[\frac{1}{2}\Big( \frac{1}{\sqrt{\expec[W_{\sss N}^2]}} \frac{c_w z}{i^{1/(\tau-1)}}\Big)^2 
	-\log \cosh \Big( \frac{1}{\sqrt{\expec[W_{\sss N}^2]}}\frac{c_w z}{i^{1/(\tau-1)}}\Big)\Big],
	\ee
where it is easy to see that the summands are positive and decreasing in $i$. Hence,
	\be
	N \expec[g(W_{\sss N}, z/N^{\frac{1}{\tau-1}})] 
	\geq  \int_{1}^N \Big[\frac{1}{2}\Big( \frac{1}{\sqrt{\expec[W_{\sss N}^2]}} \frac{c_w z}{y^{1/(\tau-1)}}\Big)^2 
	-\log \cosh \Big( \frac{1}{\sqrt{\expec[W_{\sss N}^2]}}\frac{c_w z}{y^{1/(\tau-1)}}\Big)\Big] \dint y.
	\ee
We want to use~\eqref{eq-taylorh24}, and hence split the integral in the region where $|\frac{1}{\sqrt{\expec[W_{\sss N}^2]}} \frac{c_w z}{y^{1/(\tau-1)}}|$ is bigger or smaller than $A$. This gives
	\begin{align}
	N \expec[g(W_{\sss N}, z/N^{\frac{1}{\tau-1}})] 
	&\geq \varepsilon\frac{c_w^2 z^2}{\expec[W_{\sss N}^2]} 
	\int_{1}^{\Big(\frac{A\sqrt{\expec[W_{\sss N}^2]}}{c_w |z|}\Big)^{\tau-1}} \frac{1}{y^{\frac{2}{\tau-1}}} \dint y 
	+ \varepsilon \frac{c_w^4 z^4}{\expec[W_{\sss N}^2]} 
	\int_{\Big(\frac{A\sqrt{\expec[W_{\sss N}^2]}}{c_w |z|}\Big)^{\tau-1}}^N \frac{1}{y^{\frac{4}{\tau-1}}} \dint y \nn\\
	&=\varepsilon\frac{c_w^2 z^2}{\expec[W_{\sss N}^2]}
	\frac{\tau-1}{\tau-3} \left(\Big(\frac{A\sqrt{\expec[W_{\sss N}^2]}}{c_w |z|}\Big)^{\tau-3}-1\right) \nn\\
	&\qquad\ \qquad - \varepsilon \frac{c_w^4 z^4}{\expec[W_{\sss N}^2]} \frac{\tau-1}{5-\tau}
	\left(N^{-\frac{5-\tau}{\tau-1}}-\Big(\frac{A\sqrt{\expec[W_{\sss N}^2]}}{c_w |z|}\Big)^{-(5-\tau)}\right) \nn\\
	&=k_1 |z|^{-(\tau+1)}-k_2 z^2-o(1) z^4+k_3|z|^{9-\tau},
	\end{align}
for the proper constants $k_1,k_2,k_3>0$. Since $9-\tau>4$,
	\be
	\int_{-\infty}^{\infty} \e^{-k_1 |z|^{-(\tau+1)}+k_2 z^2+o(1) z^4-k_3|z|^{9-\tau}} \dint z <\infty.
	\ee
Together with the pointwise convergence in the previous lemma, this proves this lemma for $r=0$. For $r\neq0$, the proof can be adapted as for the case $\expec[W^4]<\infty$.
\end{proof}

We next analyze the large $x$ behavior of $f(x)$ arising in the density of the limiting random variable:
\begin{lemma}[Asymptotics of $f$ for $\tau\in(3,5)$]
\label{lem-largeztau35}
For $\tau\in(3,5)$,
	\be
	\lim_{x\rightarrow\infty}\frac{f(x)}{x^{\tau-1}} 
	= \left(\frac{\tau-2}{\tau-1}\right)^{\tau-1} \int_0^\infty 
	\left(\frac{1}{2y^{2/(\tau-1)}}- \log\cosh\frac{1}{y^{1/(\tau-1)}}\right) \dint y <\infty.
	\ee
\end{lemma}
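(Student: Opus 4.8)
The plan is to recognize the series defining $f$ as a Riemann sum and to compare it with an integral. Introduce $h(u) := \tfrac12 u^2 - \log\cosh u$, which is nonnegative, even, increasing on $[0,\infty)$, and satisfies both $h(u)\le \tfrac12 u^2$ (trivially) and $h(u)\le \tfrac1{12}u^4$ (by the Taylor bound \eqref{eq-lbtaylorlogcosh}); moreover $h(u)\sim\tfrac12 u^2$ as $u\to\infty$ and $h(u)\sim\tfrac1{12}u^4$ as $u\to0$. Writing $c:=\tfrac{\tau-2}{\tau-1}$, the definition of $f$ in \eqref{eq-limitingdensity} becomes $f(x)=\sum_{i\ge1}h\!\big(c\,x\,i^{-1/(\tau-1)}\big)$. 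The summand, as a function of the index, is decreasing, so each term is the evaluation of $y\mapsto h(c x y^{-1/(\tau-1)})$ at an integer point; the natural rescaling is $t=i/x^{\tau-1}$, under which the argument $c x i^{-1/(\tau-1)}$ becomes $c\,t^{-1/(\tau-1)}$, independent of $x$, while the mesh is $x^{-(\tau-1)}$.

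Next I would make this precise by sandwiching the sum between integrals. Since $y\mapsto h(c x y^{-1/(\tau-1)})$ is positive and decreasing,
\[
\int_1^\infty h\!\big(c x y^{-1/(\tau-1)}\big)\,dy \le f(x) \le h(cx)+\int_1^\infty h\!\big(c x y^{-1/(\tau-1)}\big)\,dy.
\]
Substituting $y=x^{\tau-1}t$ gives $\int_1^\infty h\!\big(c x y^{-1/(\tau-1)}\big)\,dy = x^{\tau-1}\int_{x^{-(\tau-1)}}^\infty h\!\big(c\,t^{-1/(\tau-1)}\big)\,dt$. Dividing the sandwich by $x^{\tau-1}$, the boundary term satisfies $h(cx)/x^{\tau-1}\sim \tfrac12 c^2 x^{3-\tau}\to0$ because $\tau>3$, while the lower limit $x^{-(\tau-1)}\downarrow0$, so by monotone convergence both bounds converge to $\int_0^\infty h\!\big(c\,t^{-1/(\tau-1)}\big)\,dt$. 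Finally the change of variables $t=c^{\tau-1}y$ turns $c\,t^{-1/(\tau-1)}$ into $y^{-1/(\tau-1)}$ and produces the prefactor $c^{\tau-1}=\big(\tfrac{\tau-2}{\tau-1}\big)^{\tau-1}$, yielding exactly the claimed limit with the stated integral.

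It remains to check that the integral is finite, which is where the hypothesis $\tau\in(3,5)$ enters decisively. Near $t=0$ the argument $c\,t^{-1/(\tau-1)}$ is large, so $h\!\big(c\,t^{-1/(\tau-1)}\big)\le \tfrac12 c^2 t^{-2/(\tau-1)}$, integrable at $0$ precisely because $2/(\tau-1)<1$, i.e.\ $\tau>3$; near $t=\infty$ the argument is small, so $h\!\big(c\,t^{-1/(\tau-1)}\big)\le\tfrac{1}{12}c^4 t^{-4/(\tau-1)}$, integrable at infinity precisely because $4/(\tau-1)>1$, i.e.\ $\tau<5$. Thus both endpoints are controlled. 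The only genuinely delicate point is the interchange of limit and integration together with the moving lower endpoint; this is handled cleanly by monotone convergence since the integrand is nonnegative, so no dominating function beyond the integrability just verified is needed, and the routine asymptotics of $h$ near $0$ and $\infty$ deliver the matching two-sided bounds.
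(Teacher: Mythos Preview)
Your proof is correct and follows essentially the same approach as the paper: both introduce $h(u)=\tfrac12 u^2-\log\cosh u$, check finiteness of the integral via the bounds $h(u)\le\tfrac12 u^2$ and $h(u)\le\tfrac1{12}u^4$ at the two endpoints (using $\tau>3$ and $\tau<5$ respectively), and then identify the sum as a Riemann sum with mesh $\sim x^{-(\tau-1)}$. The only difference is cosmetic: the paper writes the Riemann-sum step in one line with a $(1+o(1))$, whereas you make the integral sandwich and the boundary term $h(cx)/x^{\tau-1}\to0$ explicit, which is a cleaner justification of that step.
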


\begin{proof}
We first prove that the integral is finite. For this, define
	\be
	h(y) = \frac12 y^2 -\log\cosh y,
	\ee
so that $h(y)\geq 0$.
Then,
	\be
	\int_0^\infty \left(\frac{1}{2y^{2/(\tau-1)}}
	- \log\cosh\frac{1}{y^{1/(\tau-1)}}\right) \dint y = \int_0^\infty h\left(\frac{1}{y^{1/(\tau-1)}}\right) \dint y.
	\ee
Since $\log\cosh y\ge0$, we have  $h(y) \leq \frac12 y^2$, and hence
	\be
	h\left(\frac{1}{y^{1/(\tau-1)}}\right) \leq \frac{1}{2y^{2/(\tau-1)}}.
	\ee
This is integrable for $y\rightarrow 0$, because $2/(\tau-1)<1$ for $\tau\in(3,5)$.

Using~\eqref{eq-lbtaylorlogcosh}, for $y$ large,
	\be
	h\left(\frac{1}{y^{1/(\tau-1)}}\right) \leq \frac{1}{12} \frac{1}{y^{4/(\tau-1)}}.
	\ee
This is integrable for $y\rightarrow\infty$, because $4/(\tau-1) >1$ for $\tau\in(3,5)$.

To prove that $f(x)/x^{\tau-1}$ converges to the integral as $x\rightarrow\infty$ we rewrite, with $a=(\tau-2)/(\tau-1)$,
	\begin{align}
	\frac{f(x)}{x^{\tau-1}} &= \frac{1}{x^{\tau-1}}\sum_{i =1}^\infty h\left(a\frac{x}{i^{1/(\tau-1)}}\right)
	=a^{\tau-1} \frac{1}{\left(ax\right)^{\tau-1}}
	\sum_{i =1}^\infty h\Big(\Big(\frac{1}{i/\left(ax\right)^{\tau-1}}\Big)^{1/(\tau-1)}\Big)\\
	&=a^{\tau-1} \int_0^\infty h\left(\frac{1}{y^{1/(\tau-1)}}\right) \dint y \left(1+o(1)\right).\nn
	\end{align}
\vskip-1.1cm
\end{proof}
\vskip0.5cm

%%%%%%%%%%%%%%%%%%%%%%%%%%%%%%%%%%%%%%%%%%%%%%%%%%%%%%%%%%%%%%%%%%%%%%%%%%%%%%%%%%%%%%%%%%%%%%
%%%%%%%%%%%%%%%%%%%%%%%%%%%%%%%%%%%%%%%%%%%%%%%%%%%%%%%%%%%%%%%%%%%%%%%%%%%%%%%%%%%%%%%%%%%%%%

\subsection{Proof of Theorem~\ref{thm-nonclassicalclt}}
\label{final-proof}

We can now prove Theorem~\ref{thm-nonclassicalclt} for the measure $\widetilde{P}_{\sss N}$:
\begin{proof}[Proof of Theorem~\ref{thm-nonclassicalclt} for the measure $\widetilde{P}_{\sss N}$]
We can do a change of variables so that
	\be\label{eq-changevariablendelta1}
	\int_{-\infty}^{\infty} \e^{-N G_{\sss N}(z;r)}\dint z 
	= N^{1/(\boldsymbol{\delta}+1)} \int_{-\infty}^{\infty} \e^{-N G_{\sss N}(z/N^{1/(\boldsymbol{\delta}+1)};r)}\dint z.
	\ee
Hence, using Lemma \ref{pippo}
	\be
	\widetilde{P}_{\sss N}\Big( \exp\Big(r \frac{S_{\sss N}}{N^{\boldsymbol{\delta}/(\boldsymbol{\delta}+1)}}\Big)\Big) 
	= \frac{\int_{-\infty}^{\infty} \e^{-N G_{\sss N}(z/N^{1/(\boldsymbol{\delta}+1)};r)}\dint z}
	{\int_{-\infty}^{\infty} \e^{-N G_{\sss N}(z/N^{1/(\boldsymbol{\delta}+1)};0)}\dint z} \;.
	\ee
It follows from Lemma~\ref{lem-integralconvE4} for $\mathbb{E}[W^4] < \infty$
and from Lemma~\ref{lem-integralconvtau35} for $\tau\in (3,5)$ that
	\be\label{eq-conv-apprx-momentgen}
	\lim_{N\rightarrow\infty}\widetilde{P}_{\sss N}
	\Big( \exp\Big(r \frac{S_{\sss N}}{N^{\boldsymbol{\delta}/(\boldsymbol{\delta}+1)}}\Big)\Big) 
	= \frac{\int_{-\infty}^{\infty} \e^{z r \sqrt{\frac{\expec[W]}{\nu}}-f\left(\sqrt{\frac{\expec[W]}{\nu}}z\right)}\dint z }
	{\int_{-\infty}^{\infty} \e^{-f\left(\sqrt{\frac{\expec[W]}{\nu}}z\right)}\dint z }
	= \frac{\int_{-\infty}^{\infty} \e^{x r-f(x)}\dint x }{\int_{-\infty}^{\infty} \e^{-f(x)}\dint x },
	\ee
where we made the change of variables $x=\sqrt{\frac{\expec[W]}{\nu}}z$ in both integrals to obtain the last equality.

As mentioned, this is sufficient to prove the convergence in distribution of $\frac{S_{\sss N}}{N^{\boldsymbol{\delta}/(\boldsymbol{\delta}+1)}}$ to the random variable $X$ ~(see \cite[Theorem~A.8.7(a)]{El}).

For the case $\expec[W^4]<\infty$,
	\be
	\lim_{x\rightarrow\infty}\frac{f(x)}{x^{1+\boldsymbol{\delta}}} 
	= \lim_{x\rightarrow\infty} \frac{\frac{1}{12}\frac{\expec[W^4]}{\expec[W]^4}x^4}{x^4}
	=\frac{1}{12}\frac{\expec[W^4]}{\expec[W]^4}.
	\ee
For $\tau\in(3,5)$, the proof that $\lim_{x\rightarrow\infty}\frac{f(x)}{x^{1+\boldsymbol{\delta}}} = C$ is given in Lemma~\ref{lem-largeztau35}.
\end{proof}

It remains to show that the statement of Theorem~\ref{thm-nonclassicalclt} also holds for the measure $P_{\sss N}$. This follows from the following lemma:

\begin{lemma}\label{lem-fromtildetoreal}
For $\expec[W^4]<\infty$ and $\tau\in(3,5)$,
	\be
	\lim_{N_\rightarrow\infty} P_{\sss N}\Big( \exp\Big(r \frac{S_{\sss N}}{N^{\lambda}}\Big)\Big)
	-\widetilde{P}_{\sss N}\Big( \exp\Big(r \frac{S_{\sss N}}{N^{\lambda}}\Big)\Big)=0.
	\ee
\end{lemma}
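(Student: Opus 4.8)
The plan is to realise $P_{\sss N}$ as an exponential tilt of $\widetilde P_{\sss N}$ and to show that this tilt is asymptotically trivial at criticality. Both measures are Gibbs measures at the same field $B=0$ but with different couplings: $P_{\sss N}$ uses $J_{ij}(\beta)$ from \eqref{couplings} while $\widetilde P_{\sss N}$ uses $\sinh(\beta)w_iw_j/\ell_{\sss N}$ as in \eqref{rank-1-ICW}. Setting $R(\sigma):=\tfrac12\sum_{i,j\in[N]}\big(J_{ij}(\beta)-\sinh(\beta)\tfrac{w_iw_j}{\ell_{\sss N}}\big)\sigma_i\sigma_j$ and $\lambda=\boldsymbol{\delta}/(\boldsymbol{\delta}+1)$, the normalising constants cancel and give
\be
P_{\sss N}\Big(\exp\Big(r\tfrac{S_{\sss N}}{N^{\lambda}}\Big)\Big)=\frac{\widetilde P_{\sss N}\big(\exp(r\tfrac{S_{\sss N}}{N^{\lambda}})\,\e^{R}\big)}{\widetilde P_{\sss N}(\e^{R})}.
\ee
I would prove the lemma by establishing (a) $\widetilde P_{\sss N}(\e^{R})\to1$ and (b) $\widetilde P_{\sss N}\big(\exp(r\tfrac{S_{\sss N}}{N^{\lambda}})(\e^{R}-1)\big)\to0$; since $\widetilde P_{\sss N}(\exp(r S_{\sss N}/N^{\lambda}))$ converges to a finite limit by Lemmas~\ref{lem-integralconvE4} and~\ref{lem-integralconvtau35}, (a) and (b) together yield the claim.

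By the coupling expansion \eqref{couplings-expansion} one has $J_{ij}(\beta_{c,N})-\sinh(\beta_{c,N})\tfrac{w_iw_j}{\ell_{\sss N}}=-\tfrac12\sinh(2\beta_{c,N})\big(\tfrac{w_iw_j}{\ell_{\sss N}}\big)^2+O\big((\tfrac{w_iw_j}{\ell_{\sss N}})^3\big)$, where $\max_{i,j}w_iw_j/\ell_{\sss N}\to0$ in both regimes (using $\max_i w_i=o(N^{1/4})$ when $\expec[W^4]<\infty$ and $\max_i w_i=c_w N^{1/(\tau-1)}$ for the sequence \eqref{eq-precisepowerlaw}). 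Writing $T_{\sss N}:=\sum_{i\in[N]}w_i^2\sigma_i$ and $c_{\sss N}:=\sinh(2\beta_{c,N})/(4\ell_{\sss N}^2)=\Theta(N^{-2})$, this gives the pointwise decomposition $R(\sigma)=-c_{\sss N}T_{\sss N}^2+\rho_{\sss N}(\sigma)$ with $\sup_\sigma|\rho_{\sss N}|\le \tfrac{C}{\ell_{\sss N}^3}\big(\sum_i w_i^3\big)^2\to0$, the last limit following from $\sum_i w_i^3=\Theta(N)$ when $\expec[W^3]<\infty$ and $\sum_i w_i^3=O(N^{3/(\tau-1)})$ otherwise. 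In particular $R\le\rho_{\sss N}$ is bounded above, so $\e^{R}$ is uniformly bounded, which legitimises the convergence arguments below. The lemma thus reduces to the single estimate
\be\label{key-est}
\widetilde P_{\sss N}(T_{\sss N}^2)=o(N^2).
\ee
Indeed, once \eqref{key-est} holds, $c_{\sss N}T_{\sss N}^2\to0$ in $\widetilde P_{\sss N}$-probability and in $L^1$; Jensen's inequality gives $\widetilde P_{\sss N}(\e^{-c_{\sss N}T_{\sss N}^2})\ge\e^{-c_{\sss N}\widetilde P_{\sss N}(T_{\sss N}^2)}\to1$ and hence (a), while boundedness of $\e^{R}$ with bounded convergence gives $\widetilde P_{\sss N}((\e^{R}-1)^2)\to0$, so that Cauchy--Schwarz against the convergent $\widetilde P_{\sss N}(\exp(2rS_{\sss N}/N^{\lambda}))$ yields (b).

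To prove \eqref{key-est} I would use the conditional (Hubbard--Stratonovich) representation underlying Lemma~\ref{pippo}: with $G_{\sss N}(z;0)$ as in \eqref{eq-defGNzr}, $\widetilde P_{\sss N}(g)=\int\e^{-NG_{\sss N}(z;0)}\,\mathbb E_z[g]\,\dint z\,/\int\e^{-NG_{\sss N}(z;0)}\,\dint z$, where under $\mathbb E_z$ the spins are independent with $\mathbb E_z[\sigma_i]=\tanh(\alpha_{\sss N}(\beta)w_iz)$. Hence $\mathbb E_z[T_{\sss N}^2]=\sum_i w_i^4\big(1-\tanh^2(\alpha_{\sss N}w_iz)\big)+\psi_{\sss N}(z)^2$, with $\psi_{\sss N}(z):=\sum_i w_i^2\tanh(\alpha_{\sss N}w_iz)$. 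The variance term is at most $\sum_i w_i^4$, which is $\Theta(N)$ when $\expec[W^4]<\infty$ and $O(N^{4/(\tau-1)})$ for $\tau\in(3,5)$, both $o(N^2)$. For $\psi_{\sss N}(z)^2$ I substitute $z=uN^{-1/(\boldsymbol{\delta}+1)}$ and use that the rescaled density is dominated uniformly in $N$ via \eqref{eq-taylorh24}, exactly as in Lemmas~\ref{lem-integralconvE4} and~\ref{lem-integralconvtau35}. When $\expec[W^4]<\infty$, $\tanh(x)\le x$ gives $\psi_{\sss N}(uN^{-1/4})\le\alpha_{\sss N}\expec[W_{\sss N}^3]\,|u|\,N^{3/4}$, so the $u$-integral is $O(N^{3/2})$ against a density of finite second moment. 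When $\tau\in(3,5)$, the truncated-moment estimates of Lemma~\ref{lem-truncatedmoments} give $\psi_{\sss N}(uN^{-1/(\tau-1)})=o(N)$ for $u$ in compacts, while always $\psi_{\sss N}(z)\le\sum_i w_i^2=\Theta(N)$; splitting the $u$-integral at a large $M$ and invoking the uniform tail control on the density (whose limit decays like $\e^{-c|u|^{\tau-1}}$ by Lemma~\ref{lem-largeztau35}) makes the $\{|u|>M\}$ part at most $\varepsilon N^2$ and the $\{|u|\le M\}$ part $o(N^2)$. Either way \eqref{key-est} follows.

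The main obstacle is precisely \eqref{key-est}: the naive bound $|T_{\sss N}|\le\sum_i w_i^2=\Theta(N)$ only yields $R=O(1)$, so one cannot avoid using the measure and must pin down the typical size of the $w_i^2$-weighted spin sum at criticality. This is the sole point where the long-range critical correlations enter; they are tamed through the conditional-independence representation together with the concentration of the auxiliary field at scale $N^{-1/(\boldsymbol{\delta}+1)}$ and the power-law truncated moments. The remaining steps---the coupling expansion, the uniform bound on $\rho_{\sss N}$, and the Jensen and Cauchy--Schwarz estimates---are routine once $\e^{R}$ is known to be uniformly bounded above.
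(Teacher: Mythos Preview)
Your proof is correct and follows the same overall architecture as the paper's: write $P_{\sss N}$ as an exponential tilt of $\widetilde P_{\sss N}$ by $R(\sigma)$ (the paper calls this $E_{\sss N}(\sigma)$), expand the coupling difference to obtain $R=-c\,Y_{\sss N}^2+o(1)$ with $Y_{\sss N}=\sum_i w_i^2\sigma_i/\ell_{\sss N}$, observe that $R$ is uniformly bounded above so $\e^{R}$ is bounded, and then combine Cauchy--Schwarz with convergence of the $2r$-moment generating function of $S_{\sss N}/N^{\lambda}$.

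The one genuine difference is how you establish that the tilt vanishes. The paper shows $Y_{\sss N}\to0$ in $\widetilde P_{\sss N}$-probability by computing the full moment generating function $\widetilde P_{\sss N}(\e^{rY_{\sss N}})$ via a second Hubbard--Stratonovich argument, exactly mirroring the treatment of $S_{\sss N}/N^{\lambda}$, and showing it converges to $1$. You instead go after the single second moment $\widetilde P_{\sss N}(T_{\sss N}^2)=o(N^2)$ via the conditional representation, splitting into a variance term $\sum_i w_i^4$ and a squared mean $\psi_{\sss N}(z)^2$, and then controlling the latter through the already-established uniform domination of the rescaled density. Your route is somewhat more economical: it avoids re-running the full integral-convergence machinery for a new observable and extracts exactly the estimate needed (an $L^1$ bound on $c_{\sss N}T_{\sss N}^2$) rather than convergence of all exponential moments. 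The paper's MGF route, on the other hand, is conceptually cleaner in that it directly reuses the template of Lemma~\ref{pippo} with a different linear tilt. One small imprecision: your appeal to Lemma~\ref{lem-truncatedmoments} for $\psi_{\sss N}$ is really a direct computation with the deterministic weights \eqref{eq-precisepowerlaw} (as the paper also does), since that lemma concerns the limiting $W$ rather than $W_{\sss N}$; the estimate you need follows immediately from summing $i^{-3/(\tau-1)}$.
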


\begin{proof}
As shown in~\cite{GGvdHP},
	\be\label{eq-measurePN}
	P_{\sss N}\big( \col{g}(\sigma)\big) 
	= \frac{\sum_{\sigma \in \Omega_{\sss N}} \col{g}(\sigma) \e^{\frac12 \sum_{i,j\in[N]}\col{J_{ij}}\sigma_i\sigma_j}}
	{\sum_{\sigma \in \Omega_{\sss N}} \e^{\frac12 \sum_{i,j\in[N]}\col{J_{ij}}\sigma_i\sigma_j}},
	\ee
where
	\be
	\label{couplings-expansion}
	\col{J_{ij}} = \frac12 \log\left(\frac{e^{\beta}p_{ij}+(1-p_{ij})}{e^{-\beta}p_{ij}+(1-p_{ij})}\right)
	=p_{ij}\sinh\beta-p_{ij}^2\sinh\beta(\cosh\beta-1)+O(p_{ij}^3),
	\ee
where we have used the Taylor expansion of $\log(1+x)$ about $x=0$ in the last equality. Hence, using~\eqref{eq-prob-edge}, 
	\begin{align}
	\e^{\frac12 \sum_{i,j\in[N]}\col{J_{ij}}\sigma_i\sigma_j} 
	&= \e^{\frac12 \sum_{i,j\in[N]}\left(\left(\frac{w_i w_j}{\ell_{\sss N}+w_i w_j}
	-\frac{w_i w_j}{\ell_{\sss N}}\right)\sinh\beta-p_{ij}^2\sinh\beta(\cosh\beta-1)
	+O(p_{ij}^3)\right)\sigma_i\sigma_j}e^{\frac12 \sinh \beta \sum_{i,j\in[N]}\frac{w_i w_j}{\ell_{\sss N}}\sigma_i\sigma_j} \nn\\
	&=: \e^{E_{\sss N}(\sigma)}e^{\frac12 \sinh \beta \sum_{i,j\in[N]}\frac{w_i w_j}{\ell_{\sss N}}\sigma_i\sigma_j}.
	\end{align}
Hence, we can rewrite~\eqref{eq-measurePN} as
	\be
	P_{\sss N}\big( \col{g}(\sigma)\big) 
	=  \frac{\sum_{\sigma \in \Omega_{\sss N}} \col{g}(\sigma)  \e^{E_{\sss N}(\sigma)}
	\e^{\frac12 \sinh \beta \sum_{i,j\in[N]}\frac{w_i w_j}{\ell_{\sss N}}\sigma_i\sigma_j}}
	{\sum_{\sigma \in \Omega_{\sss N}}  \e^{E_{\sss N}(\sigma)}
	\e^{\frac12 \sinh \beta \sum_{i,j\in[N]}\frac{w_i w_j}{\ell_{\sss N}}\sigma_i\sigma_j}}
	= \frac{\widetilde{P}_{\sss N}\big( \col{g}(\sigma)\e^{E_{\sss N}(\sigma)}\big)}
	{\widetilde{P}_{\sss N}\big(\e^{E_{\sss N}(\sigma)}\big)}.
	\ee
Combining this with the Cauchy-Schwarz inequality gives
	\begin{align}
	\left| P_{\sss N}\Big( \exp\Big(r \frac{S_{\sss N}}{N^{\lambda}}\Big)\Big)-\widetilde{P}_{\sss N}\Big( \exp\Big(r \frac{S_{\sss N}}{N^{\lambda}}\Big)\Big)\right| 
	& = \frac{\left|\widetilde{P}_{\sss N}\Big( \exp\Big(r \frac{S_{\sss N}}{N^{\lambda}}\Big) 
	\Big(\e^{E_{\sss N}(\sigma)}- \widetilde{P}_{\sss N}\big(\e^{E_{\sss N}(\sigma)}\big)\Big)\Big)\right|}
	{\widetilde{P}_{\sss N}\big(\e^{E_{\sss N}(\sigma)}\big)} \nn\\
	&\leq \frac{\sqrt{\widetilde{P}_{\sss N}\Big( \exp\Big(2r \frac{S_{\sss N}}{N^{\lambda}}\Big)\Big)} \sqrt{\widetilde{P}_{\sss N}
	\bigg(\Big(\e^{E_{\sss N}(\sigma)}- \widetilde{P}_{\sss N}\big(\e^{E_{\sss N}(\sigma)}\big)\Big)^2\bigg)} }
	{\widetilde{P}_{\sss N}\big(\e^{E_{\sss N}(\sigma)}\big)}\nn\\
	&= \frac{\sqrt{\widetilde{P}_{\sss N}\Big( \exp\Big(2r \frac{S_{\sss N}}{N^{\lambda}}\Big)\Big)} 
	\sqrt{\widetilde{P}_{\sss N}\Big(\e^{2E_{\sss N}(\sigma)}\Big)
	- \widetilde{P}_{\sss N}\big(\e^{E_{\sss N}(\sigma)}\big)^2} }{\widetilde{P}_{\sss N}\big(\e^{E_{\sss N}(\sigma)}\big)}\;.
	\end{align}
From~\eqref{eq-conv-apprx-momentgen}, it follows that the first square root converges as $N\to\infty$. We next analyze $E_{\sss N}(\sigma)$ and show that $E_{\sss N}(\sigma)\to 0$ in probability w.r.t.\ $\widetilde{P}_{\sss N}$. We also show that $E_{\sss N}(\sigma)$ is uniformly bounded from above, so that the lemma follows by dominated convergence.

We first analyze the contribution of the $O(p_{ij}^3)$ terms in $E_{\sss N}(\sigma)$. Note that
	\be
	\sum_{i,j\in[N]} p_{ij}^3 = \sum_{i,j\in[N]} \left(\frac{w_iw_j}{\ell_{\sss N}+w_iw_j}\right)^3
	 \leq \sum_{i,j\in[N]} \left(\frac{w_iw_j}{\ell_{\sss N}}\right)^3 = \frac{1}{\ell_{\sss N}^3} \bigg(\sum_{i\in[N]} w_i^3\bigg)^2.
	\ee
For $\expec[W_{\sss N}^2]\to\expec[W^2]<\infty$ it holds that $\max_i w_i=o(\sqrt{N})$. Hence,
	\be
	\bigg(\sum_{i\in[N]} w_i^3\bigg)^2 \leq  (\max_i w_i)^2 \bigg(\sum_{i\in[N]} w_i^2\bigg)^2 
	= o(N^3) \expec[W_{\sss N}^2]^2 = o(\ell_{\sss N}^3),
	\ee
because $\ell_{\sss N} = O(N)$. Hence,
	\begin{align}
	E_{\sss N}(\sigma) 
	&= \frac12 \sum_{i,j\in[N]}\left(\left(\frac{w_i w_j}{\ell_{\sss N}+w_i w_j}-\frac{w_i w_j}{\ell_{\sss N}}\right)\sinh\beta
	-p_{ij}^2\sinh\beta(\cosh\beta-1)+\mathcal{O}(p_{ij}^3)\right)\sigma_i\sigma_j \nn\\
	&= -\frac12 \sum_{i,j\in[N]}\bigg(\frac{w_i^2 w_j^2}{\ell_{\sss N}(\ell_{\sss N}+w_i w_j)}
	\sinh\beta+\left(\frac{w_i w_j}{\ell_{\sss N}+w_i w_j}\right)^2\sinh\beta(\cosh\beta-1)\bigg)\sigma_i\sigma_j +o(1) \nn\\
	&= -\frac12\sinh\beta\cosh\beta \sum_{i,j\in[N]}\frac{w_i^2 w_j^2}{\ell_{\sss N}^2}\sigma_i\sigma_j +o(1) \nn\\
	&= - \frac12\sinh\beta\cosh\beta \bigg(\sum_{i\in[N]}\frac{w_i^2}{\ell_{\sss N}} \sigma_i \bigg)^2 +o(1),
	\end{align}
where the third equality can be proved as in the analysis of $p_{ij}^3$. Hence, $E_{\sss N}(\sigma)$ is indeed uniformly bounded from above, so that $\e^{E_{\sss N}(\sigma)}$ is uniformly bounded. 

It remains to prove that $E_{\sss N}(\sigma)\to 0$ in probability w.r.t.\ $\widetilde{P}_{\sss N}$. We define $Y_{\sss N} = \sum_{i\in[N]}\frac{w_i^2}{\ell_{\sss N}} \sigma_i$, so that
	\be
	E_{\sss N} = -\frac12 \sinh\beta\cosh\beta \, Y_{\sss N}^2 + o(1).
	\ee
We analyze the moment generating function of $Y_{\sss N}$ the same way as $S_{\sss N}/N^\lambda$. That is, we use the Hubbard-Stratonovich identity to rewrite
	\begin{align}
	\widetilde{P}_{\sss N} \left(\e^{r Y_{\sss N}}\right) 
	&= \frac{\sum_{\sigma\in\Omega_{\sss N}} \e^{r Y_{\sss N}} \e^{\frac12\frac{\sinh\beta}{\ell_{\sss N}}
	(\sum_{i\in[N]}w_i\sigma_i)^2}}{\sum_{\sigma\in\Omega_{\sss N}} 
	\e^{\frac12\frac{\sinh\beta}{\ell_{\sss N}}(\sum_{i\in[N]}w_i\sigma_i)^2}}\\
	&= \frac{\sum_{\sigma\in\Omega_{\sss N}} \expec\left[\e^{r\sum_{i\in[N]}\frac{w_i^2}
	{N\expec[W_{\sss N}]} \sigma_i + \sqrt{\frac{\sinh\beta}{N\expec[W_{\sss N}]}}
	\sum_{i\in[N]}w_i\sigma_i Z}\right]}{\sum_{\sigma\in\Omega_{\sss N}} 
	\expec\left[e^{\sqrt{\frac{\sinh\beta}{N\expec[W_{\sss N}]}}\sum_{i\in[N]}w_i\sigma_i Z }\right]}\nn\\
	& = \frac{\expec\bigg[\e^{N\expec\big[\log\cosh\big(r\frac{W_{\sss N}^2}{N\expec[W_{\sss N}]}+\sqrt{\frac{\sinh\beta}{N\expec[W_{\sss N}]}}W_{\sss N} Z\big)\,\big|\,Z\big]}\bigg]}{\expec\bigg[\e^{N\expec\big[\log\cosh\big(\sqrt{\frac{\sinh\beta}{N\expec[W_{\sss N}]}}W_{\sss N} Z\big)\,\big|\,Z\big]}\bigg]}\nn\\
	&=\frac{\int_{-\infty}^{\infty} \e^{-z^2/2+N\expec\big[\log\cosh\big(r\frac{W_{\sss N}^2}{N\expec[W_{\sss N}]}+\sqrt{\frac{\sinh\beta}{N\expec[W_{\sss N}]}}W_{\sss N} z\big)\big]} \dint z}{\int_{-\infty}^{\infty} \e^{-z^2/2+N\expec\big[\log\cosh\big(\sqrt{\frac{\sinh\beta}{N\expec[W_{\sss N}]}}W_{\sss N} z\big)\big]} \dint z}.\nn
	\end{align}
We do a change of variables replacing $z/\sqrt{N}$ by $z$, so that
	\begin{align}\label{eq-momgenYN}
	\widetilde{P}_{\sss N} \left(\e^{r Y_{\sss N}}\right)
	&=\frac{\int_{-\infty}^{\infty} \e^{-Nz^2/2+N\expec\big[\log\cosh\big(r\frac{W_{\sss N}^2}{N\expec[W_{\sss N}]}+\sqrt{\frac{\sinh\beta}{\expec[W_{\sss N}]}}W_{\sss N} z\big)\big]} \dint z}{\int_{-\infty}^{\infty} \e^{-Nz^2/2+N\expec\big[\log\cosh\big(\sqrt{\frac{\sinh\beta}{\expec[W_{\sss N}]}}W_{\sss N} z\big)\big]} \dint z} \\
&= \frac{\int_{-\infty}^{\infty} \e^{-NG_{\sss N}(z;0)+N\expec\big[\log\cosh\big(r\frac{W_{\sss N}^2}{N\expec[W_{\sss N}]}+\sqrt{\frac{\sinh\beta}{\expec[W_{\sss N}]}}W_{\sss N} z\big)-\log\cosh\big(\sqrt{\frac{\sinh\beta}{\expec[W_{\sss N}]}}W_{\sss N} z\big)\big]} \dint z}{\int_{-\infty}^{\infty} \e^{-NG_{\sss N}(z;0)} \dint z}\nn\\
&= \frac{\int_{-\infty}^{\infty} \e^{-NG_{\sss N}(z/N^{1/(\boldsymbol{\delta}+1)};0)+N\expec\big[\log\cosh\big(r\frac{W_{\sss N}^2}{N\expec[W_{\sss N}]}+\sqrt{\frac{\sinh\beta}{\expec[W_{\sss N}]}}W_{\sss N} \frac{z}{N^{1/(\boldsymbol{\delta}+1)}}\big)-\log\cosh\big(\sqrt{\frac{\sinh\beta}{\expec[W_{\sss N}]}}W_{\sss N} \frac{z}{N^{1/(\boldsymbol{\delta}+1)}}\big)\big]} \dint z}{\int_{-\infty}^{\infty} \e^{-NG_{\sss N}(z/N^{1/(\boldsymbol{\delta}+1)};0)} \dint z},\nn
\end{align}
where we did another change of variable in the last equality.

In Lemmas~\ref{pluto} and~\ref{lem-plutotau35}, we proved that $NG_{\sss N}(z/N^{1/(\boldsymbol{\delta}+1)};0)$ converges for $\beta=\beta_c$. We Taylor expand the remaining term,
\begin{align}
N\expec\Big[&\log\cosh\Big(r\frac{W_{\sss N}^2}{N\expec[W_{\sss N}]}+\sqrt{\frac{\sinh\beta}{\expec[W_{\sss N}]}}W_{\sss N} \frac{z}{N^{1/(\boldsymbol{\delta}+1)}}\Big)-\log\cosh\Big(\sqrt{\frac{\sinh\beta}{\expec[W_{\sss N}]}}W_{\sss N} \frac{z}{N^{1/(\boldsymbol{\delta}+1)}}\Big)\Big]\nn\\
&= \expec\Big[\tanh\Big(\sqrt{\frac{\sinh\beta}{\expec[W_{\sss N}]}}W_{\sss N} \frac{z}{N^{1/(\boldsymbol{\delta}+1)}}\Big)r\frac{W_{\sss N}^2}{\expec[W_{\sss N}]}+o(1)\Big].
\end{align}
For $\expec[W_{\sss N}^3]\to\expec[W^3]<\infty$, which includes power-law distributions with $\tau>4$, we can use that $|\tanh(x)| \leq |x|$, so that
\be
\Big|\expec\Big[\tanh\Big(\sqrt{\frac{\sinh\beta}{\expec[W_{\sss N}]}}W_{\sss N} \frac{z}{N^{1/(\boldsymbol{\delta}+1)}}\Big)r\frac{W_{\sss N}^2}{\expec[W_{\sss N}]}\Big]\Big| \leq \sqrt{\frac{\sinh\beta}{\expec[W_{\sss N}]}} \frac{|zr|}{N^{1/(\boldsymbol{\delta}+1)}}\frac{\expec[W_{\sss N}^3]}{\expec[W_{\sss N}]} = o(1).
\ee
For $\tau\in(3,4]$ we use the deterministic choice of the weights as in~\eqref{eq-precisepowerlaw} and $\boldsymbol{\delta}=\tau-2$ to rewrite
\begin{align}
\Big|\expec\Big[\tanh\Big(&\sqrt{\frac{\sinh\beta}{\expec[W_{\sss N}]}}W_{\sss N} \frac{z}{N^{1/(\boldsymbol{\delta}+1)}}\Big)r\frac{W_{\sss N}^2}{\expec[W_{\sss N}]}\Big]\Big| = \Big|\frac{1}{N}\sum_{i=1}^N \tanh\Big(\sqrt{\frac{\sinh\beta}{\expec[W_{\sss N}]}}w_i \frac{z}{N^{1/(\tau-1)}}\Big)r\frac{w_i^2}{\expec[W_{\sss N}]}\Big| \nn\\
&=\Big|\frac{1}{N}\sum_{i=1}^N \tanh\Big(\sqrt{\frac{\sinh\beta}{\expec[W_{\sss N}]}} \frac{c_w z}{i^{1/(\tau-1)}}\Big)\frac{r c_w^2}{\expec[W_{\sss N}]}\left(\frac{N}{i}\right)^{2/(\tau-1)}\Big| \nn\\
&\leq \frac{|r| c_w^2}{\expec[W_{\sss N}]} N^{2/(\tau-1)-1} + N^{2/(\tau-1)-1} \sqrt{\frac{\sinh\beta}{\expec[W_{\sss N}]}} \frac{|rz| c_w^3}{\expec[W_{\sss N}]} \sum_{i=2}^N i^{-3/(\tau-1)}.
\end{align}
For $\tau>3$ the first term is $o(1)$. For $\tau\in(3,4)$,
\be
N^{2/(\tau-1)-1} \sum_{i=2}^N i^{-3/(\tau-1)} \leq N^{-\frac{\tau-3}{\tau-1}} \int_1^N i^{-3/(\tau-1)}\dint i = \frac{\tau-1}{4-\tau} \left(N^{-\frac{\tau-3}{\tau-1}}-N^{-1/(\tau-1)}\right)=o(1),
\ee
whereas for $\tau=4$
\be
N^{2/(\tau-1)-1} \sum_{i=2}^N i^{-3/(\tau-1)} \leq N^{-\frac{\tau-3}{\tau-1}} \int_1^N i^{-3/(\tau-1)}\dint i = N^{-\frac{\tau-3}{\tau-1}} \log N=o(1).
\ee
Hence, in all cases the integrands in the numerator and denominator of~\eqref{eq-momgenYN} have the same limit. In Lemmas~\ref{lem-integralconvE4} and~\ref{lem-integralconvtau35} it is proved that the integral in the denominator converges. Since
	\be
	\Big|\expec\Big[\tanh\Big(\sqrt{\frac{\sinh\beta}
	{\expec[W_{\sss N}]}}W_{\sss N} \frac{z}{N^{1/(\boldsymbol{\delta}+1)}}\Big)r\frac{W_{\sss N}^2}{\expec[W_{\sss N}]}\Big]
	\Big| \leq \frac{r\expec[W_{\sss N}^2]}{\expec[W_{\sss N}]} = O(1),
	\ee
it follows by dominated convergence that the integral in the numerator has the same limit. Hence,
	\be
	\lim_{N\to\infty} \widetilde{P}_{\sss N} \left(\e^{r Y_{\sss N}}\right) = 1,
	\ee
from which it follows that $Y_{\sss N} \to 0$ in probability w.r.t.\ $\widetilde{P}_{\sss N}$. Hence, also $-\frac12 \sinh\beta\cosh\beta \, Y_{\sss N}^2\to 0$ in probability w.r.t.\ $\widetilde{P}_{\sss N}$. Since $o(1)$ also converges to $0$ in probability, so does the sum:
\be
E_{\sss N} = -\frac12 \sinh\beta\cosh\beta \, Y_{\sss N}^2 + o(1) \longrightarrow 0 \qquad {\rm in\ probability\ w.r.t.\ } \widetilde{P}_{\sss N}.
\ee
\end{proof}
\medskip

\begin{remark}[Sharp asymptotics of the partition function]
\label{SA-PF}
It follows from the changes of variables in~\eqref{eq-substitutesqrtN} and~\eqref{eq-changevariablendelta1} that 
$$
Z_{\sss N}(\beta_c,0)=A N^{1/2+1/(\boldsymbol{\delta}+1)} 2^{N}(1+o(1)).
$$
For $\expec[W^4]<\infty$, this exponent equals $1/2+1/(\boldsymbol{\delta}+1)=3/4$, whereas for $\tau\in(3,5)$, it is $1/2+1/(\boldsymbol{\delta}+1)=(\tau+1)/(2\tau-2)$.
\co{Thus the partition function has finite-size power-law corrections (in agreement with \cite{Bov} where the classical Curie-Weiss model is considered).}
\end{remark}

%%%%%%%%%%%%%%%%%%%%%%%%%%%%%%%%%%%%%%%%%%%%%%%%%%%%%%%%%%%%%%%%%%%%%%%%%%%%%%%%%%%%%%%%%%%%%%
%%%%%%%%%%%%%%%%%%%%%%%%%%%%%%%%%%%%%%%%%%%%%%%%%%%%%%%%%%%%%%%%%%%%%%%%%%%%%%%%%%%%%%%%%%%%%%

\subsection{Scaling window}\label{sec-scaling-window}
Instead of looking at the inverse temperature sequence \co{$\beta_{\sss N}=\beta_{c,N}$} we can also look at 
\co{$\beta'_{\sss N}=\beta_{c,N}+b /  N^{\frac{\boldsymbol{\delta}-1}{\boldsymbol{\delta}+1}}$} for some constant $b$. 
The analysis still works and the limiting density instead becomes
	\be
	\label{lim-dens-scal-window}
	\exp\left\{\frac{b}{2}\cosh (\beta_{c})\frac{\expec[W^2]^2}{\expec[W]^3}x^2 - f(x)\right\}.
	\ee
To see why this is correct we look at the following second moment, which  shows up in the expansion of $G_{\sss N}$, see \eqref{eq-2ndmomentargG}:
	\begin{align}\label{eq-2ndmomentscalingwindow}
	\frac12\expec&\Big[\Big(\sqrt{\frac{\sinh (\beta_{c,N}+b /  N^{\frac{\boldsymbol{\delta}-1}{\boldsymbol{\delta}+1}})}	
	{\expec[W_{\sss N}]}}W_{\sss N} \frac{z}{N^{1/(\boldsymbol{\delta}+1)}} 
	+ \frac{r}{N^{\boldsymbol{\delta}/(\boldsymbol{\delta}+1)}}\Big)^2\Big] \\
 	&\qquad = \frac{z^2}{2N^{2/(\boldsymbol{\delta}+1)}} \sinh (\beta_{c,N}+b /  N^{\frac{\boldsymbol{\delta}-1}
	{\boldsymbol{\delta}+1}})\frac{\expec[W_{\sss N}^2]}{\expec[W_{\sss N}]}
	+\sqrt{\sinh (\beta_{c,N}+b /  N^{\frac{\boldsymbol{\delta}-1}{\boldsymbol{\delta}+1}})
	\expec[W_{\sss N}]}\frac{zr}{N} + o(1/N). \nn
	\end{align}
In the first term, we Taylor expand the sine hyperbolic about \col{$\beta_{c,N}$}, which gives
	\be
	\sinh (\beta_{c,N}+b /  N^{\frac{\boldsymbol{\delta}-1}{\boldsymbol{\delta}+1}}) = \sinh (\beta_{c,N}) + 	
	\cosh(\beta_{c,N}) b /  N^{\frac{\boldsymbol{\delta}-1}{\boldsymbol{\delta}+1}} 
	+ O\left(1/	N^{2\frac{\boldsymbol{\delta}-1}{\boldsymbol{\delta}+1}}\right).
	\ee
For the other term, and also for the other terms in the expansion of $G_{\sss N}$, it suffices to note that
	\be
	\sqrt{\sinh (\beta_{c,N}+b /  N^{\frac{\boldsymbol{\delta}-1}{\boldsymbol{\delta}+1}})} 
	= \sqrt{\sinh (\beta_{c,N})} +  O\left(1/N^{\frac{\boldsymbol{\delta}-1}{\boldsymbol{\delta}+1}}\right).
	\ee
\col{	
Hence,~\eqref{eq-2ndmomentscalingwindow} equals
	\begin{align}
	& \frac{z^2}{2N^{2/(\boldsymbol{\delta}+1)}} 
	+ \frac{z^2}{2N^{2/(\boldsymbol{\delta}+1)}} \cosh (\beta_{c,N}) 
	\frac{b}{N^{\frac{\boldsymbol{\delta}-1}{\boldsymbol{\delta}+1}}} 
	\frac{\expec[W_{\sss N}^2]}{\expec[W_{\sss N}]}
	+\frac{zr}{N}\frac{\expec[W_{\sss N}]}{\sqrt{\expec[W_{\sss N}^2]}} + o(1/N)\nn\\
	&= \frac{z^2}{2N^{2/(\boldsymbol{\delta}+1)}} 
	+\frac{b z^2}{2N} \cosh (\beta_{c,N}) \frac{\expec[W_{\sss N}^2]}{\expec[W_{\sss N}]} 
	+\frac{zr}{N}\frac{\expec[W_{\sss N}]}{\sqrt{\expec[W_{\sss N}^2]}} + o(1/N)	
	\label{abcd}
	\end{align}
{In the expansion of $G_N(z/N^{1/(\delta+1)};r)$ the first term in \eqref{abcd} drops as usual, whereas the second term in \eqref{abcd} remains.}
After multiplication by $N$ (cf.~\eqref{mulp-N-GN-conv}), one has 
\be
- N G_N(z/N^{1/(\boldsymbol{\delta}+1)};r) = \frac{b z^2}{2} \cosh (\beta_{c}) \frac{\expec[W_{\sss N}^2]}{\expec[W_{\sss N}]} 
- f\left(\frac{\expec[W_{\sss N}]}{\sqrt{\expec[W_{\sss N}^2]}}z\right) + o(1)
\ee
Using the substitution $x=\frac{\expec[W_{\sss N}]}{\sqrt{\expec[W_{\sss N}^2]}}z$
the above converges in the limit $N\to\infty$ to the exponent in \eqref{lim-dens-scal-window}, as required.
}
%Hence,~\eqref{eq-2ndmomentscalingwindow} equals
%	\begin{align}
%	\frac{z^2}{2N^{2/(\boldsymbol{\delta}+1)}} 
%	&+ \frac{z^2}{2N^{2/(\boldsymbol{\delta}+1)}} \cosh (\beta_{c,N}) 
%	\frac{b}{N^{\frac{\boldsymbol{\delta}-1}{\boldsymbol{\delta}+1}}} 
%	\frac{\expec[W_{\sss N}^2]}{\expec[W_{\sss N}]}
%	+\frac{zr}{N}\frac{\expec[W_{\sss N}]}{\sqrt{\expec[W_{\sss N}^2]}} + o(1/N)\nn\\
%	&= \frac{x^2}{2N^{2/(\boldsymbol{\delta}+1)}} \frac{\expec[W_{\sss N}^2]}{\expec[W_{\sss N}]^2} 
%	+\frac{b x^2}{2N} \cosh (\beta_{c,N}) \frac{\expec[W_{\sss N}^2]^2}{\expec[W_{\sss N}]^3} 
%	+ \frac{xr}{N} + o(1/N),\label{abcde}
%	\end{align}
%where we have used the substitution $x=\frac{\expec[W_{\sss N}]}{\sqrt{\expec[W_{\sss N}^2]}}z$ to get the correct constant.
%%The first term cancels the $\frac{x^2}{N^{2/(\boldsymbol{\delta}+1)}}$ term present in 
%%$\expec\Big[\log{\cosh\Big(\frac{\sinh (\beta_{c,N}+b /  N^{\frac{\boldsymbol{\delta}-1}{\boldsymbol{\delta}+1}})}	
%%	{\expec[W_{\sss N}]}}W_{\sss N} \frac{z}{N^{1/(\boldsymbol{\delta}+1)}} 
%%	+ \frac{r}{N^{\boldsymbol{\delta}/(\boldsymbol{\delta}+1)}}\Big)\Big]$, as before.
%When $N\rightarrow \infty$ and after multiplication by $N$ (cf.~\eqref{mulp-N-GN-conv}), the above converges to the exponent in \eqref{lim-dens-scal-window}, as required.
\qed

\paragraph{Limit distribution at $\beta_c$ instead of $\beta_{c,N}$.}
In the above, we look at the inverse temperature sequence \co{$\beta_N= \beta_{c,N}$} and then take the limit $N\rightarrow\infty$. Alternatively, we could immediately start with $\beta= \beta_{c}$. 
%For the inhomogeneous Curie-Weiss model this leads to different results when $\tau\in(3,5)$, but not when $\expec[W_{\sss N}^4]\rightarrow \expec[W^4]<\infty$.
\co{The scaling limit that will be seen depends on
the speed at which $\nu_{\sss N}$  approaches $\nu$. Indeed, from \eqref{betac_ann} and 
\eqref{ctn}, one has $\beta_c - \beta_{c,N} = O (\nu - \nu_{\sss N})$.}

We investigate this for the deterministic weights according to~\eqref{eq-precisepowerlaw}, and first investigate how close $\nu_{\sss N}$ is to $\nu$. By \cite[Lemma 2.2]{BhaHofLee09b}, $\nu_{\sss N}=\nu+ \zeta N^{-\eta} + o(N^{-\eta})$ with $\eta=(\tau-3)/(\tau-1)$ and $\zeta$ an explicit non-zero constant. Thus, for $\tau>5$, $\nu_{\sss N}=\nu+o(N^{-1/2})$. Hence, the results stay the same (see the previous discussion).

When $\tau\in(3,5)$, instead, $\nu_{\sss N}=\nu+ \zeta N^{-\eta} + o(N^{-\eta})=\nu+ \zeta N^{-\frac{\boldsymbol{\delta}-1}{\boldsymbol{\delta}+1}} + o(N^{-\frac{\boldsymbol{\delta}-1}{\boldsymbol{\delta}+1}})$, so we are shifted inside the critical window (see the previous discussion). Hence, in this case the limiting distribution changes.
\qed
\medskip

\begin{center}
\begin{table}
\caption{List of symbols used}
\medskip
\begin{tabular}{l|l|l}
\textbf{Symbol} & \textbf{Definition} & \textbf{Description} \\ \hline
$N$ & \ & Number of vertices \\
$[N]$ & $\{1,\ldots,N\}$ & Set of first $N$ positive integers \\
$\beta$ & & Inverse temperature \\
$B$ & & External field \\
%$\sigma$ & $\{\sigma_1,\ldots,\sigma_N\}$ & Spin configuration \\
%$\mu_{\sss N} $ & & Boltzmann-Gibbs measure \\
$H_{\sss N} $ && Hamiltonian \\
$Z_{\sss N} $ & & Partition function \\
$\phi$ & $\lim_{N\to\infty} \frac{1}{N} \log Z_N$ & Pressure of inhomogeneous Curie-Weiss model \\
$w_i$ & & Weight of vertex $i$ \\
$\boldsymbol{w}$ & $(w_1,\ldots,w_N)$ & Sequence of weights \\
$\ell_{\sss N}$ & $\sum_{i=1}^N w_i$ & Total weight \\
$GRG_{\sss N} (\boldsymbol{w})$ && Generalized random graph with weights $\boldsymbol{w}$ and $N$ vertices\\
$p_{ij}$ & $\frac{w_i w_j}{\ell_N+w_i w_j}$ & Probability of an edge between vertices $i$ and $j$ in $GRG_N(\boldsymbol{w})$ \\
$W_{\sss N} $ && Weight of uniformly chosen vertex \\
$W$ && Random variable chosen such that $W_{\sss N} \stackrel{\cal D}{\longrightarrow} W$ \\
$\nu$ & $\expec[W^2]/\expec[W]$ & Size-biased weight\\
$\nu_{\sss N}$ & $\expec[W_{\sss N}^2]/\expec[W_{\sss N}]$ & Its finite volume analogue\\
$\tau$ && Power-law exponent \\
$Q_{\sss N}$ && Law of the random graphs \\
$P_{\sss N}$ && Annealed Ising measure \\
$\psi_{\sss N}$ &$\frac{1}{N}\log Q_{\sss N}(Z_{\sss N})$ & Annealed pressure \\
$S_{\sss N}$ & $\sum_{i=1}^N \sigma_i$ & Total spin \\
$M_{\sss N}$ & $P_{\sss N}(S_{\sss N}/N)$ & Annealed  magnetization \\
$\chi_{\sss N}$ & $\frac{\partial}{\partial B} M_{\sss N}$ & Annealed susceptibility \\
$\psi, M, \chi$ && $\lim_{N\to\infty}$ of $\psi_{\sss N}, M_{\sss N}, \chi_{\sss N}$, respectively\\
$\beta_c$ & $\asinh(1/\nu)$ & Annealed critical inverse temperature \\
$\beta_{c,{\sss N}}$ & $\asinh(1/\nu_{\sss N})$ & Its finite volume analogue \\
$\boldsymbol{\beta},\boldsymbol{\delta},\boldsymbol{\gamma},\boldsymbol{\gamma'}$ && Critical exponents, see Def.~\ref{def-CritExp} \\
%$\lambda$ &$\boldsymbol{\delta}/(\boldsymbol{\delta}+1)$ & Proper scaling of $S_{\sss N}$ at criticality \\
$z^{*}$ && Fixed point of~\eqref{fixpGRG0} \\
%$\alpha=\alpha(\beta)$ & $\sqrt{\frac{\sinh(\beta)}{\expec[W]}}$ & \\
$\widetilde{P}_{\sss N}$ && Curie-Weiss approximation of $P_{\sss N}$, see~\eqref{eq-ptildemeasure} \\
$\widetilde{Z}_{\sss N}$ && Curie-Weiss approximation of $Z_{\sss N}$, see~\eqref{eq-Ztildedef} \\
\hline
%$f(x)$ && See~\eqref{eq-limitingdensity} \\
%$G_{\sss N}(z;r)$ && See~\eqref{eq-defGNzr} \\
%$g(w,z)$ && See~\eqref{eq-defgwz} 
\end{tabular}
\end{table}
\end{center}

\vspace{0.4cm}
\noindent
{\small
{\bfseries Acknowledgments.}
\change{We thank Aernout van Enter for helpful discussions on inhomogeneous versions of the Curie-Weiss models.}
We thank Institute Henri Poincar\'e for the hospitality during the trimester ``Disordered systems, random spatial processes and their applications''. 
We acknowledge financial support from  the Italian Research Funding Agency (MIUR) through FIRB project 
%``Stochastic processes in interacting particle systems: duality, metastability and their applications'', 
grant n.\ RBFR10N90W.
%\MLP{Check Acknowledgments and add acknowledgments to de Panafieu and Broutin}
The work of RvdH is supported in part by the Netherlands
Organisation for Scientific Research (NWO) through VICI grant 639.033.806 and the Gravitation {\sc Networks} grant 024.002.003.}

{\small
%%%%%%%%%%%%%%%%%%%%%%%
}


\begin{thebibliography}{[99]}



\bibitem{AB}
\col{
\newblock {\sc R.~Albert and A.-L.~Barab\'asi},
\newblock \emph{Statistical mechanics of complex networks.}
\newblock Review of Modern Physics, 74, 47--97 (2002).
}

%\bibitem{AidHofKliLee14}
%{\sc E.~A\"id\'ekon, R.~van der Hofstad, S.~Kliem and J.S.H.~van Leeuwaarden.}
%\newblock \emph{Large deviations for power-law thinned L\'evy processes.}
%\newblock Preprint, arXiv:1404.1692, (2014).

%\bibitem{ABT}
%\newblock {\sc R.~Arratia, A.D.~Barbour and S.~Tavar\`e},
%\newblock \emph{Logarithmic combinatorial structures: a
%probabilistic approach},
%\newblock European Mathematical Society, (2003).


\bibitem{BenSch11}
\newblock {\sc I.~Benjamini and O.~Schramm},
\newblock \emph{Recurrence of distributional limits of finite planar graphs.}
\newblock {Selected Works of Oded Schramm}, 533--545, Springer New York (2011).


\bibitem{BhaHofLee09b}
\newblock {\sc S.~Bhamidi, R.~van~der Hofstad, and J.~S. H.~van Leeuwaarden},
\newblock\emph{ Novel scaling limits for critical inhomogeneous random graphs.}
\newblock  The Annals of Probability, 40 (6), 299--2361  (2012).

\bibitem{Bia}
\col{\newblock {\sc G.~Bianconi},
\newblock \emph{Superconductor-insulator transition on annealed complex networks.}
\newblock Physical Review E, 85(6), 061113 (2012).
}

\bibitem{BJR}
\newblock {\sc B. Bollob\'as, S. Janson, and O. Riordan}, 
\newblock \emph{The phase transition in inhomogeneous random graphs.}
\newblock Random Structures \& Algorithms, 31(1), 3--122 (2007).

\bibitem{Bov}
\co{
\newblock {\sc A. Bovier}, 
\newblock \emph{Statistical mechanics of disordered systems: a mathematical perspective.}
\newblock Cambridge University Press (2006).
}

\bibitem{CGZ}
\newblock {\sc F. Comets, N.  Gantert and O. Zeitouni}, 
\newblock \emph {Quenched, annealed and functional large deviations for one-dimensional random walk in random environment.}
\newblock Probability Theory and Related Fields, 118(1),  65--114 (2000).


\bibitem{DM}
\newblock {\sc A.~Dembo and A.~Montanari},
\newblock  \emph{Ising models on locally tree-like graphs.}
\newblock Annals of Applied Probability,  20, 565--592 (2010).

\bibitem{DemMon10-bis}
\newblock{\sc A.~Dembo and A.~Montanari},
\newblock \emph{Gibbs measures and phase transitions on sparse random graphs.}
\newblock Brazilian Journal of Probability and Statistics, {24}, 137--211 (2010).


%\bibitem{DMSS}
%\newblock {\sc A.~Dembo, A.~Montanari A.~Sly and N.~Sun},
%\newblock \emph {The replica symmetric solution for Potts models on d-regular graphs}
%\newblock preprint arXiv:1207.5500

%\bibitem{DMS}
%\newblock {\sc A.~Dembo, A.~Montanari and N.~Sun},
%\newblock \emph {Factor models on locally tree-like graphs}
%\newblock arXiv:1110.4821

%\bibitem{DG}
%\newblock {\sc L.~De Sanctis and F.~Guerra},
%\newblock  \emph{Mean field dilute ferromagnet: high temperature and zero temperature behavior},
%\newblock Journal of Statistical Physics 132, 759--785 (2008).

\bibitem{DGH}
\newblock {\sc S.~Dommers, C.~Giardin\`a and R.~van der Hofstad},
\newblock  \emph{Ising models on power-law random graphs.}
\newblock Journal of Statistical Physics, 141(4), 638--660 (2010).

\bibitem{DGH2}
\newblock {\sc S.~Dommers, C.~Giardin\`a and R.~van der Hofstad},
\newblock \emph{Ising critical exponents on random trees and graphs.}
\newblock Communications in Mathematical Physics, 328(1), 355--395 (2014).

\bibitem{DGM}
\newblock {\sc S.N.~Dorogovtsev, A.V.~Goltsev and J.F.F.~Mendes},
\newblock \emph{Ising models on networks with an arbitrary distribution of connections.}
\newblock Physical Review E, 66, 016104 (2002).

\bibitem{DGM2}
\newblock {\sc S.N.~Dorogovtsev, A.V.~Goltsev and J.F.F.~Mendes},
\newblock \emph{Critical phenomena in complex networks.}
\newblock Reviews of Modern Physics, 80(4), 1275--1335 (2008).

\bibitem{El}
\newblock {\sc R.S.~Ellis},
\newblock \emph{Entropy, Large Deviations, and Statistical Mechanics.}
\newblock Springer-Verlag, New York (1985).

\bibitem{Ellis-New01}
\newblock {\sc R.S.~Ellis, C.M.~Newman},
\newblock \emph{Limit theorems for sums of dependent random variables occurring in statistical mechanics.}
\newblock  Zeitschrift f\"ur Wahrscheinlichkeitstheorie und verwandte Gebiete, 44(2), 117--139 (1978).

\bibitem{Ellis-New02}
\newblock {\sc R.S.~Ellis, C.M.~Newman},
\newblock \emph{The statistics of Curie-Weiss models.}
\newblock  Journal of Statistical Physics, 19(2), 149--161 (1978).


\bibitem{GGvdHP2}
\newblock {\sc C.~Giardin\`a, C.~Giberti, R.~van~der Hofstad and M.L.~Prioriello},
\newblock \emph{Quenched central limit theorems for the Ising model on random graphs.}
\newblock Journal of Statistical Physics, 160, 1623--1657 (2015).

\bibitem{GGvdHP}
\newblock {\sc C.~Giardin\`a, C.~Giberti, R.~van~der Hofstad and M.L.~Prioriello},
\newblock \emph{Annealed central limit theorems for the Ising model on random graphs.}
\newblock ALEA, Latin American Journal of Probability and Mathematical Statistics, 13(1), 121--161 (2016).

\bibitem{vdH}
\newblock {\sc R.~van der Hofstad},
\newblock \emph{Random graphs and complex networks. Vol. I.}
\newblock Lecture notes, Preprint, (2014).

%\bibitem{vdHII}
%\newblock {\sc R.~van der Hofstad},
%\newblock \emph{Random graphs and complex networks. Vol. II},
%\newblock Lecture notes, Preprint, (2014).

%\bibitem{JL}
%\newblock {\sc S.~Janson and M.~Luczak},
%\newblock \emph {A new approach to the giant component problem}
%\newblock Random Structures \& Algorithms  34(2), 197--216 (2009).

\bibitem{KBHK}
\newblock {\sc M.~Krasnytska, B.~Berche, Y.~Holovatch and R.~Kenna},
\newblock \emph{Violation of Lee-Yang circle theorem for Ising phase transitions on complex networks.}
\newblock Europhysics Letters, 111(6), 60009 (2015).

\bibitem{LVVZ}
\newblock {\sc M.~Leone, A.~V\'azquez, A.~Vespignani and R.~Zecchina},
\newblock \emph{Ferromagnetic ordering in graphs with arbitrary degree distribution.}
\newblock The European Physical Journal B,  28, 191--197 (2002).

%\bibitem{MPV}
%\newblock {\sc M. Mezard, G. Parisi and M.A. Virasoro} 
%\newblock \emph {Spin glass theory and beyond}
%\newblock  World scientific, Singapore (1987).

\bibitem{MMS}
\newblock {\sc A.~Montanari, E.~Mossel and A.~Sly},
\newblock \emph {The weak limit of Ising models on locally tree-like graphs.}
\newblock Probability Theory and Related Fields, 152, 31--51 (2012).




\end{thebibliography}
\end{document}